\newtheorem{theorem}{Theorem} [section]
\newtheorem{lemma}[theorem]{Lemma}
\newtheorem{proposition}[theorem]{Proposition}
\newtheorem{remark}[theorem]{Remark}
\newtheorem{definition}[theorem]{Definition}
\newtheorem{corollary}[theorem]{Corollary}
\DeclareMathOperator*{\esssup}{ess \ sup}
\newcommand{\Z}{\mathbb{Z}}
\newcommand{\R}{\mathbb{R}}
\newcommand{\C}{\mathbb{C}}
\newcommand{\T}{\mathbb{T}}
\newcommand{\al}{\alpha}
\newcommand{\dl}{\delta}
\newcommand{\Ta}{\Theta}
\newcommand{\eps}{\varepsilon}
\newcommand{\Ld}{\Lambda}
\newcommand{\ft}{\widehat}
\newcommand{\Ft}{{\mathcal{F}}}
\newcommand{\wt}{\widetilde}
\newcommand{\cj}{\overline}
\newcommand{\dx}{\partial_x}
\newcommand{\dt}{\partial_t}
\newcommand{\br}{\breve}
\newcommand{\pd}{\partial}
\newcommand{\ta}{\theta}
\renewcommand{\l}{\ell}
\newcommand{\les}{\lesssim}
\newcommand{\ges}{\gtrsim}
\newcommand{\jb}[1]
{\langle #1 \rangle}
\newcommand{\Ml}{\mathcal{M}}
\newcommand{\Nl}{\mathcal{N}}
\newcommand{\Kl}{\mathcal{K}}
\newcommand{\Rl}{\mathcal{R}}
\newcommand{\N}{\mathbb{N}}
\renewcommand{\o}{\omega}
\newcommand{\Com}[2]{\mathrm{Com}^{#1} [#2]}
\newcommand{\I}{\mathrm{I}}
\newcommand{\II}{\mathrm{I \! I}}
\newcommand{\III}{\mathrm{I \! I \! I}}
\newcommand{\regwp}{s_\ast (\al)}
\numberwithin{equation}{section}
\numberwithin{theorem}{section}
\let\Re=\undefined\DeclareMathOperator*{\Re}{Re}
\let\Im=\undefined\DeclareMathOperator*{\Im}{Im}
\begin{document}
\baselineskip = 14pt

\title[Well- and Ill-posedness for derivative fNLS on the torus]
{Well- and Ill-posedness of the Cauchy problem for derivative fractional nonlinear Schr\"odinger equations
on the torus}

\author[T.K.~Kato, T.~Kondo, and M.~Okamoto]
{Takamori Kato, Toshiki Kondo, and Mamoru Okamoto}

\address{
Takamori Kato\\
Mathematical Science Course\\
Faculty of Science and Engineering\\
Saga University\\
1 Honjo-machi, Saga-city, Saga, 840-8502, Japan
}
\email{tkkato@cc.saga-u.ac.jp}

\address{
Toshiki Kondo\\
Department of Mathematics\\
Graduate School of Science\\ Osaka University\\
Toyonaka\\ Osaka\\ 560-0043\\ Japan
}
\email{u534463k@ecs.osaka-u.ac.jp}

\address{
Mamoru Okamoto\\
Department of Mathematics\\\
Graduate School of Advanced Science and Engineering\\
Hiroshima University\\
1-3-1 Kagamiyama,
Higashi-Hiroshima, 739-8526
Japan}
\email{mokamoto@hiroshima-u.ac.jp}

\subjclass[2020]{35Q55}

\keywords{fractional Schr\"odinger equation;
well-posedness; modified energy; ill-posedness; non-existence}

\begin{abstract}
We consider the Cauchy problem for derivative fractional Schr\"odinger equations (fNLS) on the torus $\T$. 
Recently, the second and third authors established a necessary and sufficient condition on the nonlinearity for well-posedness of semi-linear Schr\"odinger equations on $\T$.
In this paper, we extend this result to derivative fNLS.
More precisely, we prove that the necessary and sufficient condition on the nonlinearity is the same as that for semi-linear Schr\"odinger equations.
However, since we can not employ a gauge transformation for derivative fNLS,
we use the modified energy method to prove well-posedness.
We need to inductively construct correction terms for the modified energy when the fractional Laplacian is of order between $1$ and $\frac 32$.
For the ill-posedness, we prove the non-existence of solutions to the Cauchy problem by exploiting a Cauchy-Riemann-type operator that appears in nonlinear interactions.

\end{abstract}

%
\maketitle

\tableofcontents

\newpage

\section{Introduction}

We consider the Cauchy problem for the following fractional nonlinear Schr\"odinger equation (fNLS):
\begin{equation} 
\left\{
\begin{aligned}
&\dt u + i D^\al u = F(u, \dx u, \cj u, \cj{\dx u}), \\ 
&u|_{t=0} = \phi,
\end{aligned}
\right.
\label{fNLS}
\end{equation}
where $\al > 2$, $\T := \R /(2\pi \Z)$, $u = u(t,x) : \R \times \T \to \C$ is an unknown function,
and $\phi : \T \to \C$ is a given function.
Here, $D^\al$ is the fractional derivative defined by
\[
D^\al f(t,x) = \sum_{k \in \Z} |k|^\al \ft f(t,k) e^{ikx},
\] 
$F(\zeta, \o, \cj{\zeta}, \cj{\o})$ is a polynomial in $\zeta, \o, \cj \zeta, \cj \o \in \C$,
and $\cj \zeta$ denotes the complex conjugate of $\zeta \in \C$.
The fNLSs appear in fractional quantum mechanics,
a generalization of the standard quantum mechanics extending the Feynman path integral to L\'evy processes \cite{La00}.
The aim in this paper is to determine a necessary and sufficient condition on $F$ for well-posedness of \eqref{NLS} in the Sobolev space.

When $\al = 2$, the equation \eqref{fNLS} is the semi-linear Schr\"odinger equation:
\begin{equation}
\dt u - i \dx^2 u =  F(u, \dx u, \cj u, \cj{\dx u}).
\label{NLS}
\end{equation}
There are many results on \eqref{NLS}.
See, for instance, \cite{Tak99, Chi02, Her06, DNY21, KNV23, BaPe24, HOV25}.
In particular,
the second and third authors \cite{KoOk25b} established a necessary and sufficient condition on $F$ to be well-posed for \eqref{NLS} in the Sobolev space $H^s(\T)$ for $s>\frac 52$.

When $\al \neq 2$,
fNLSs without derivative nonlinearity have been studied by many researchers, even when $\al<2$.
Since there are many results, we will cite here only a few recent results on $\T$:
\cite{SRR25, AlTz25, BLLZ} for $\al>2$ and
\cite{EGT19, LiWa24, YZL25} for $\al<2$.
See also \cite{Mizu06, Chi15, TaTs22} for higher-order linear Schr\"odinger equations.

However, there are a few results on fNLS with derivative nonlinearity on $\T$ even when $\al>2$.
Segata \cite{Se12, Se15} proved well-posedness for fourth-order nonlinear Schr\"odinger equations (4NLS).
The first author \cite{Kato25} proved the unconditional well-posedness for 4NLS.
Onodera \cite{Ono24} studied a system of 4NLS.
Liu and Zhang \cite{LiZh24} constructed quasi-periodic solutions to fNLS for $\al \in (1,2)$.
In particular,
to the best of the authors' knowledge, there are no well-posedness results in periodic Sobolev spaces for fNLS with derivative nonlinearity when $\al<4$, except for $\al=2$.
In contrast to fNLS,
for fractional Korteweg-de Vries-type equations, especially when the unknown function is real-valued,
there has been a lot of research even on $\T$.
See \cite{KaTo06, KePi16, Kwa18, Sch20, Gas21, Roy22, MoTa22, Joc24}, for example.

In the case of the real line, the situation changes drastically.
Thanks to the local smoothing effect on $\R$, we are able to recover the derivative losses unless the nonlinearity includes derivative quadratic terms.
In fact, when $\al=2$, Kenig, Ponce, and Vega \cite{KPV93} proved that \eqref{NLS} is well-posed in $H^s(\R)$ for $s>\frac 72$ if $F$ is a cubic or higher-order polynomial.
When $\al>2$, the dispersion effect becomes stronger, so their result can be extended to $\al>2$ as well.

Before stating the main result, we define some notations.

\begin{definition}
\label{def:sol}
Let $s>\frac 32$, $T > 0$, and $\phi \in H^s(\T)$.
We say that $u$ is a solution to \eqref{fNLS} in $H^s(\T)$ on $[0,T]$
if $u$ satisfies the following two conditions:

\begin{enumerate}
\item
$u \in L^\infty ([0,T]; H^s(\T))$;

\item
For any $\chi \in C_c^\infty([0,T) \times \T)$,
\footnote{Here, $C_c^\infty([0,T) \times \T)$ denotes the space of smooth functions with compact support in $[0,T) \times \T$.
}
we have
\begin{align*}
&- \int_0^T \int_\T u (t,x) \dt \chi(t,x) dx dt
- \int_\T \phi (x) \chi (0,x) dx
\\
&\quad
+ i \int_0^T \int_\T u(t,x) D^\al \chi(t,x) dx dt
\\
&=
\int_0^T \int_\T F(u(t,x), \dx u(t,x), \cj{u(t,x)}, \cj{\dx u(t,x)}) \chi (t,x) dx dt.
\end{align*}
\end{enumerate}
A solution on $[-T,0]$ is defined in a similar manner.
\end{definition}

The second condition (ii) in Definition \ref{def:sol} means that $u$ satisfies \eqref{fNLS} in the sense of distribution.
The condition $s>\frac 32$, together with Corollary \ref{cor:bili3} below, guarantees that the nonlinear term $F(u, \dx u, \cj u , \cj{\dx u})$ is well-defined.

\begin{remark}
\label{rem:sold}
\rm
If $s >\frac 32$ and $u \in C([0,T]; H^s(\T))$ is a solution to \eqref{fNLS},
Corollary \ref{cor:bili3} below yields that
\[
F( u, \dx u, \cj u , \cj{\dx u}) \in C([0,T]; H^{s-1}(\T)).
\]
Hence,
$u \in C^1([0,T]; H^{s-\al}(\T))$
and
\[
\dt u + i D^\al u = F( u, \dx u, \cj u , \cj{\dx u})
\]
holds in $H^{s-\al}(\T)$ for $t \in [0,T]$.
\end{remark}

We say that \eqref{fNLS} is (locally in time) well-posed in $H^s(\T)$ if the following two conditions are satisfied:
\begin{enumerate}
\item
For any $\phi \in H^s(\T)$,
there exist $T>0$ and a unique solution $u \in C([-T,T]; H^s(\T))$ to \eqref{NLS}.%
\footnote{
Corollary \ref{cor:diff2} yields the unconditional uniqueness.
In this paper, we only consider the case for $s$ satisfying the assumption of Corollary \ref{cor:diff2},
so we will also consider uniqueness in $C([-T,T]; H^s(\T))$ here.
}

\item
If $\phi_n$ converges to $\phi$ in $H^s(\T)$,
then the corresponding solution $u_n$ to \eqref{NLS} with the initial data $\phi_n$ converges to $u$ in $C([-T,T]; H^s(\T))$.
\end{enumerate}
That is,
well-posedness means that the solution map can be defined
and that it is continuous with respect to the initial data.

For $\zeta = a+ib \in \C$ ($a,b \in \R$),
we write
\[
\frac{\pd F}{\pd \zeta} := \frac 12 \Big( \frac{\pd F}{\pd a} - i \frac {\pd F}{\pd b} \Big), \quad
\frac{\pd F}{\pd \cj{\zeta}} := \frac 12 \Big( \frac{\pd F}{\pd a} + i \frac {\pd F}{\pd b} \Big).
\]
We also use the following abbreviations:
\[
F_\zeta = \frac{\pd F}{\pd \zeta}, \quad
F_\o = \frac{\pd F}{\pd \o}, \quad
F_{\cj{\zeta}} = \frac{\pd F}{\pd \cj{\zeta}}, \quad
F_{\cj{\o}} = \frac{\pd F}{\pd \cj{\o}}.
\]
Set
\begin{align}
\regwp
&:=\max\Big(\frac \al2 +1 , \frac 52\Big).
\label{condA}
\end{align}
The following is the main result in this paper.

\begin{theorem}
\label{thm:equiv}
Let $\al > 2$ and  $s \in \R$ satisfy
\[
s>
\regwp,
\]
where $\regwp$ is defined in \eqref{condA}.
Then, the Cauchy problem \eqref{fNLS} is well-posed in $H^s(\T)$ if and only if
\begin{equation}
\int_\T \Im F_\o ( \psi(x),\dx \psi(x),\cj{\psi(x)},\cj{\dx \psi(x)} ) dx = 0
\label{Fbez}
\end{equation}
for any $\psi \in H^s (\T)$.
\end{theorem}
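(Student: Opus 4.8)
The plan is to prove the two implications of the equivalence separately, treating \eqref{Fbez} as the precise obstruction to an $H^s$ energy estimate. For the sufficiency direction I would first work with smooth solutions and differentiate $\|u(t)\|_{H^s}^2$ in time. Writing $w:=D^s u$ and using that $D^\al$ and $D^s$ are self-adjoint real Fourier multipliers, the linear part contributes $2\Re\langle -iD^\al w, w\rangle=0$, so that
\[
\tfrac{d}{dt}\|u\|_{H^s}^2 = 2\Re\langle D^s F, D^s u\rangle + (\text{l.o.t.}).
\]
A fractional Leibniz expansion of $D^s F$ shows that the only terms carrying $s+1$ derivatives are $F_\o\,\dx w$ and $F_{\cj\o}\,\dx\cj w$, modulo commutators of order $\le 2s$. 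The $F_{\cj\o}$–contribution is harmless because $(\dx\cj w)\cj w=\tfrac12\dx(\cj w^{\,2})$ is a perfect derivative, and integrating by parts moves the derivative onto $F_{\cj\o}$. Splitting $2\Re[F_\o(\dx w)\cj w]=\Re F_\o\,\dx|w|^2-2\,\Im F_\o\,\Im[(\dx w)\cj w]$ and integrating the first summand by parts, the sole genuinely derivative-losing term is
\[
\Bn := -2\int_\T \Im F_\o\,\Im\!\big[(\dx w)\,\cj w\big]\,dx .
\]

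\emph{Role of \eqref{Fbez} and the modified energy.} Expanding $\Im[(\dx w)\cj w]$ in Fourier series, its diagonal ($k=k'$) contribution to $\Bn$ equals $-2\big(\int_\T\Im F_\o\,dx\big)\sum_k k\,|\ft w(k)|^2$, a momentum of order $2s+1$. Since for a genuine solution the profile $\psi=u(t)$ is admissible in \eqref{Fbez}, the hypothesis forces $\int_\T\Im F_\o\,dx=0$ for every $t$, and this resonant part of $\Bn$ vanishes identically. The remaining off-diagonal part oscillates in $x$, hence can be integrated by parts in time against the dispersion: replacing each $\dt u$ by $-iD^\al u+F$, the phase $\Omega\sim|k_{\max}|^\al$ produced by $D^\al$ supplies a denominator that recovers the lost derivative. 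I would implement this through a modified energy $E(u)=\|u\|_{H^s}^2+\sum_j\Lambda_j(u)$ whose corrections $\Lambda_j$ have the schematic form $(\text{bad multiplier})/\Omega$. One normal-form step lowers the derivative order of the remainder to $2s+2-\al$, which is below $2s$ as soon as $\al>2$; however, for $\al\in(2,3)$ the near-resonant regions where $\Omega$ degenerates consume this small margin, and one is forced to construct the corrections $\Lambda_1,\Lambda_2,\dots$ inductively, a procedure that terminates after finitely many steps precisely because $\al>2$.

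\emph{Closing the well-posedness argument.} I would then check that $E(u)$ is comparable to $\|u\|_{H^s}^2$ for bounded $\|u\|_{H^s}$ (the corrections being of strictly lower order by the construction above) and that $\tfrac{d}{dt}E(u)\les C(\|u\|_{H^s})$ with no derivative loss; a Gronwall argument yields an a priori bound on a time interval depending only on $\|\phi\|_{H^s}$. Existence then follows from a standard compactness/limiting argument applied to frequency-truncated or mollified problems, using Corollary \ref{cor:bili3} to give meaning to the nonlinearity, while uniqueness and continuous dependence come from running the same modified-energy estimate on the difference of two solutions, for which the paper provides Corollary \ref{cor:diff2}.

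\emph{Necessity and the main obstacle.} For the converse I argue by contraposition: if \eqref{Fbez} fails there is $\psi_0$ with $\int_\T\Im F_\o(\psi_0,\dx\psi_0,\cj{\psi_0},\cj{\dx\psi_0})\,dx\neq0$, so the diagonal part of $\Bn$ no longer cancels and generates an instantaneous, sign-definite transfer of order $2s+1$ that the dispersion cannot balance — a net gain or loss of half a derivative. Localizing this interaction exposes a Cauchy–Riemann-type operator whose inhomogeneous problem is not solvable within $H^s$; choosing initial data concentrated on the offending frequencies, I would show that any hypothetical $u\in C([0,T];H^s)$ solution in the sense of Definition \ref{def:sol} would be forced to leave $H^s$ for $t>0$, a contradiction, giving non-existence. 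I expect the analytic heart of the proof to be on the well-posedness side, namely the inductive construction and the coercivity of the modified energy, and in particular the control of the near-resonances for $\al\in(2,3)$; on the ill-posedness side the delicate point is to convert the formal half-derivative loss into a rigorous non-existence statement through the Cauchy–Riemann obstruction.
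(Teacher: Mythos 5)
Your overall strategy does match the paper's (cancellation of the resonant momentum term via \eqref{Fbez}, a modified energy with inductively constructed corrections, and a Cauchy--Riemann obstruction for necessity), but the derivative count at the heart of your well-posedness argument is wrong, and it makes the proposal internally inconsistent. You claim that one normal-form step leaves a remainder of order $2s+2-\al$, i.e.\ a gain of $\al-1$ derivatives; if that were true, a \emph{single} correction would close the estimate for every $\al>2$ and no induction would ever be needed, contradicting your own conclusion (and the paper's structure). The actual gain per step is only $\al-2$: when $\dt$ falls on a solution factor inside the correction and the equation is substituted, the nonlinearity $F_\o \dx v$ re-introduces a full derivative, so the remainder has order $2s+3-\al$. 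Equivalently, in the paper's language, the fractional Leibniz rule \eqref{fLeib1} produces the term $-\al(\dx f)D^{\al-2}\dx g$, of order $\al-1$, and this is precisely the new problematic term that each correction hands to the next; this is encoded in the commutator estimate of Proposition \ref{prop:comm3} and in the telescoping identity for $K_{n,\eps}^r+\dt L_{n,\eps}^r-K_{n+1,\eps}^r$ of Lemma \ref{lem:modi}, with the weights in \eqref{eq:Ln} dropping by $\frac{\al-2}{2}$ per factor and the number of corrections fixed by \eqref{conN}, $N\sim \frac 1{\al-2}$. Your alternative explanation via ``near-resonant regions where $\Omega$ degenerates'' is not the mechanism: the phase $|k|^\al-|k_2|^\al\sim |k_1||k_2|^{\al-1}$ has no degeneracy beyond the low-frequency factor $|k_1|$, which is exactly what $P_{\neq 0}$ and $\dx^{-1}$ absorb.

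Two further gaps. First, continuous dependence does not follow from ``running the same modified-energy estimate on the difference of two solutions'': the difference equation contains $(\Ta_\o^{(1)}-\Ta_\o^{(2)})\dx v^{(2)}$, which loses a derivative relative to the difference, so the difference estimate closes only at lower regularity --- this is why Corollary \ref{cor:diff2} is stated in $H^1$ and why the paper must run a Bona--Smith approximation (Subsection \ref{Subsec:code}) to obtain continuity of the flow in $H^s$; citing Corollary \ref{cor:diff2} alone does not give well-posedness in $H^s$. Second, on the necessity side, ``initial data concentrated on the offending frequencies is forced to leave $H^s$'' is only a forward-in-time heuristic that is hard to close in a nonlinear problem; the rigorous mechanism is backward: one proves that the existence of \emph{any} solution in $C([0,T];H^s(\T))$ forces a regularity gain $P_\mp\phi\in H^{s+\dl}(\T)$ (Theorem \ref{thm:NE2}, via the exponential factor $e^{k\int_0^t \Im P_0\Ta_\o}$ and a normal-form treatment of the non-resonant terms), and then chooses data violating this gain while retaining \eqref{Fbez2}; note also that one cannot in general take $\phi=\psi$, so an approximation argument is needed to produce such data.
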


For the ill-posedness in Theorem \ref{thm:equiv},
we prove the non-existence of solutions to \eqref{fNLS}.

\begin{theorem}
\label{thm:nonexi}
Let $\al > 2$ and  $s \in \R$ satisfy
\[
s>
\regwp,
\]
where $\regwp$ is defined in \eqref{condA}.
Suppose that there exists $\psi \in H^s (\T)$ such that
\begin{equation}
\int_\T \Im F_\o ( \psi(x),\dx \psi(x),\cj{\psi(x)},\cj{\dx \psi(x)} ) dx \neq 0.
\label{Fbez2}
\end{equation}
Then,
there exists $\phi  \in H^s (\T)$ such that for no $T > 0$
the Cauchy problem \eqref{fNLS} has a solution in $C([0,T]; H^s(\T))$ or $C([-T,0]; H^s(\T))$.
\end{theorem}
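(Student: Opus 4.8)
The plan is to argue by contradiction. Fix, using \eqref{Fbez2}, a function $\psi$ with
\[
\kappa := \tfrac{1}{2\pi}\int_\T \Im F_\o(\psi,\dx\psi,\cj\psi,\cj{\dx\psi})\,dx \neq 0,
\]
and, after a symmetry reduction, assume $\kappa>0$ (the backward interval $[-T,0]$ is treated by the time reversal $t\mapsto -t$ combined with complex conjugation, which carries \eqref{fNLS} into an equation of the same form, and in which the roles of the two chiralities below are interchanged). Since $s>\frac32$, Corollary \ref{cor:bili3} makes $v\mapsto F_\o(v,\dx v,\cj v,\cj{\dx v})$ continuous from $H^s(\T)$ into $C(\T)$, so the mean $\tfrac1{2\pi}\int_\T\Im F_\o$ depends continuously on the state in $H^s$. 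I then take $\phi=\psi+\eta$, where $\eta\in H^s(\T)$ is small but genuinely non-analytic, with tails $\ft\eta(k)\sim\jb{k}^{-s-1}$ as $|k|\to\infty$ on both chiralities; thus $\phi\in H^s$ while $e^{\eps|D|}\eta\notin L^2(\T)$ for every $\eps>0$. I will show that no solution in $C([0,T];H^s(\T))$ can have this initial datum, forward-time non-existence coming from the negative-frequency part of $\eta$ and backward-time non-existence from the positive-frequency part.

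The heart of the matter is to extract a Cauchy--Riemann operator from the nonlinear interaction. Applying $\jb{D}^s$ and the Szeg\H{o} projection $P_-$ onto negative frequencies, and writing $w:=P_-\jb{D}^s u$, the evolution takes the form
\[
\dt w = -iD^\al w + P_-\big(F_\o\,\dx w\big) + R,
\]
where $R$ gathers the commutators of $\jb{D}^s$ and $P_-$ with the coefficients together with the genuinely nonlinear remainder. Since $D^\al$ is self-adjoint, the dispersive term is energy-neutral, and the only first-order contribution to the $L^2$ energy is
\[
\tfrac{d}{dt}\|w\|_{L^2}^2 = -2\int_\T (\Im F_\o)\,\Im(\cj w\,\dx w)\,dx + (\text{lower order}).
\]
On negative frequencies $\int_\T\Im(\cj w\,\dx w)\,dx=-\jb{|D|w,w}$ is a negative-definite form, so freezing the coefficient at its mean $\kappa$ yields the leading rate $+2\kappa\,\jb{|D|w,w}>0$. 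Because $|D|$ agrees with $\mp i\dx$ on the two chiralities, the principal evolution is $\dt w=\kappa|D|w$, equivalently the Cauchy--Riemann-type (elliptic) operator $\dt\mp i\kappa\dx$, whose Cauchy problem is solvable only for data analytic in a strip. Concretely I integrate a differential inequality $\frac{d}{dt}\|P_N w\|_{L^2}^2\ges \kappa N\|P_N w\|_{L^2}^2 - C_N$ on the dyadic piece at frequency $\sim N$, over a short time on which $\tfrac1{2\pi}\int_\T\Im F_\o(u(t),\dots)\,dx\ge\kappa/2$; the bad-chirality part of $u(t)$ is then forced to dominate $e^{\kappa t|D|}$ applied to that of $\phi$, up to lower-order corrections. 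Since $e^{\kappa t|D|}\eta\notin L^2$ for $t>0$, letting $N\to\infty$ gives $\|u(t)\|_{H^s}=\infty$ for every $t\in(0,T]$, the desired contradiction.

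The two effects that could a priori destroy this $|D|$-growth are the variable (non-mean) part of $F_\o$ and the remainder $R$; controlling them is the main obstacle. The non-mean modes of $F_\o$ couple a frequency $k$ to $k'\neq k$, and the conjugate term $F_{\cj\o}\cj{\dx u}$ couples $k$ to $-k'$; both are non-resonant, since the dispersive phase gaps $\big||k|^\al-|k'|^\al\big|$ and $|k|^\al+|k'|^\al$ are large at high frequency for $\al>2$. A normal-form / integration-by-parts-in-time argument (the same mechanism underlying the modified energy of the well-posedness half) shows these off-diagonal interactions are strictly lower order than the diagonal rate $-k\kappa$ and hence cannot cancel it; this is exactly where $\al>2$ enters. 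The commutators and the nonlinear remainder are then estimated in $H^s$ by the product bounds of Corollary \ref{cor:bili3}, using $s>\regwp$, and contribute only $O(N^{1-})$, negligible against $\kappa N$. Finally, because $u\in C([0,T];H^s)$ and the mean of $\Im F_\o$ is $H^s$-continuous, the coefficient evaluated along the unknown solution stays above $\kappa/2$ on a short interval, which suffices since the contradiction is obtained in the limit $N\to\infty$ at any fixed positive time. I expect the coercivity step---rigorously isolating $\kappa$ as the exponential rate while dominating all variable-coefficient commutators and nonlinear terms on the single bad chirality---to be the genuinely hard part.
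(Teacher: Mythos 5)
Your identification of the mechanism matches the paper's: the resonant mean $\Im P_0 F_\o$ produces a Cauchy--Riemann-type factor $e^{-k\int \Im P_0 \Ta_\o}$ on one chirality, the non-resonant interactions (both $(P_{\neq 0}F_\o)\dx v$ and $F_{\cj \o}\cj{\dx v}$) are removed by a normal form exploiting the phase gaps $\big||k|^\al-|k_2|^\al\big|$ and $|k|^\al+|k_2|^\al$ (this is indeed where $\al>2$ enters), and the data is chosen rough on both chiralities while preserving the mean condition by continuity. Your reduction of the backward interval by time reversal plus conjugation, and your choice $\phi=\psi+\eta$, are also consistent with the paper (which invokes an approximation argument from \cite{KoOk25b}).

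However, the step you yourself flag as ``the genuinely hard part'' is a genuine gap, and the route you propose to close it would fail. You want a forward-in-time \emph{lower} bound: a dyadic inequality $\frac{d}{dt}\|P_N w\|_{L^2}^2\gtrsim \kappa N\|P_N w\|_{L^2}^2 - C_N$ forcing $\|u(t)\|_{H^s}=\infty$. But $C_N$ is not generated by the block $P_N w$ alone: the variable-coefficient terms, the commutators with $P_-$ and $\jb{D}^s$, and the remainder $R$ feed frequency $N$ from the \emph{whole} solution, and they are controlled only in aggregate $\l^2$-type norms (e.g.\ $R\in H^{s-1}$ gives $\jb{k}^s|\ft R(k)|\les \jb{k}\,c_k$ with $(c_k)\in\l^2$, nothing better pointwise in $k$). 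Since your data has square-summable tails $|\ft w(0,k)|\sim\jb{k}^{-1}$, there is no way to guarantee $|\ft w(0,k)|\gg C_k/(\kappa|k|)$ mode by mode; the Gronwall/Duhamel lower bound then degenerates (a mode can sit near the equilibrium value $C_N/(\kappa N)$ and never grow), and no contradiction with $u\in C([0,T];H^s(\T))$ is reached. The paper avoids lower bounds altogether by running the mode-wise ODE for $\ft V(t,k)=e^{i|k|^\al t}\ft v(t,k)$ \emph{backwards}: it expresses $\ft V(0,k)$ in terms of $\ft V(T^\ast,k)$ and Duhamel integrals weighted by $e^{k\int \Im P_0\Ta_\o}$, which for $k<0$ is exponentially decaying; integrating this decaying weight in time gains almost one derivative (estimate \eqref{eintM}), and together with purely \emph{upper} bounds on the normal-form remainders (Proposition \ref{prop:FGH}) this shows that existence of a solution forces $P_-\phi\in H^{s+\dl}(\T)$ for $0<\dl<\min(\al-2,1)$ (Theorem \ref{thm:NE2}); the case of the opposite sign or of negative times gives $P_+\phi$. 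Non-existence then follows simply by choosing $\phi$ with $P_\pm\phi\notin H^{s+\dl}(\T)$ and nonzero mean. This inversion --- ``existence implies regularity of the data'' rather than ``roughness of the data implies norm inflation'' --- is the missing idea that lets the argument close using only the estimates you already have at your disposal.
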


Due to nonlinear interactions, resonant terms appear where dispersion effects vanish.
The condition \eqref{Fbez} cancels a problematic resonant part.
Theorem \ref{thm:nonexi} states that this part is the only problematic aspect in demonstrating the existence of a solution to \eqref{fNLS}.

The conditions \eqref{Fbez} and \eqref{Fbez2} are the same as those for $\al=2$.
See \cite{KoOk25b}.
In particular, under \eqref{Fbez2},
regardless of how large $\al$ is, the derivatives in the nonlinear term can not be controlled by the dispersive effect.
Consequently, without structural conditions such as \eqref{Fbez} on the nonlinear term,
dispersive equations on $\T$ do not admit solutions.

\begin{remark}
\rm

From Theorems \ref{thm:equiv} and \ref{thm:nonexi},
the condition \eqref{Fbez} is equivalent to the existence of a solution $u \in C([0,T]; H^s(\T))$ or $u \in C([-T,0]; H^s(\T))$ for some $T>0$.
We note that
the conditions for solvability depend on the function space in which the solutions are sought.
In fact, by the same argument as in Section 4 of \cite{KiTs18},
in the space of real-analytic functions, \eqref{fNLS} has a solution even without assuming \eqref{Fbez}.
Moreover, when $F$ depends only on $\zeta$ and $\o$ (i.e., is independent of $\cj \zeta$ and $\cj \o$),
Nakanishi and Wang \cite{NaWa25} proved that \eqref{fNLS} is globally well-posed in a space of
distributions whose Fourier support is in the half-space.
\end{remark}

\begin{remark}
\rm
Since we need to deal with fractional derivatives in the construction of the correction term (see \eqref{modene1} below) in the proof of well-posedness
and Proposition \ref{prop:FGH} in the proof of non-existence,
this paper only considers polynomial nonlinearities to avoid certain technical difficulties arising from fractional derivatives.
However, the same result holds for real-analytic nonlinearity $F$ with respect to $\Re \zeta$, $\Im \zeta$, $\Re \o$, and $\Im \o$.
\end{remark}

Here, we give examples of nonlinear terms.

\begin{enumerate}
\item[(a)]
If $F$ is independent of $\o$,
we then have $F_\o = 0$.
This implies \eqref{Fbez}.
The typical nonlinear terms in this case are
\[
|u|^{2m} u, \quad \cj{u}^m \cj{\dx u}
\]
for some $m \in \N$.
These correspond to the cases of
$\zeta^{m+1} \cj{\zeta}^{m}$ and $\cj \zeta^m \cj \o$, respectively.

\item[(b)]
Let $c \in \C$ and $m \in \N$.
The nonlinear term
\[
c u^m \dx u
\]
corresponds to $F(\zeta, \o, \cj \zeta, \cj \o) = c \zeta^m \o$.
Since
$F_\o(\zeta, \o, \cj \zeta, \cj \o) = c \zeta^m$,
the condition \eqref{Fbez} holds if and only if $c =0$.
Moreover, when $c \neq 0$,
the condition \eqref{Fbez2} holds with $\psi(x)= c^{-\frac 1m} e^{\frac \pi {2m} i}$.
While the second and third authors \cite{KoOk25a} proved
norm inflation for \eqref{fNLS} with $\al>3$ and this nonlinearity,
Theorem \ref{thm:nonexi} implies the non-existence of a solution,
which is a stronger result of ill-posedness.

\item[(c)]
Let $c \in \C$.
The nonlinear term
\[
c \dx (|u|^2 u)
\]
corresponds to $F(\zeta, \o, \cj \zeta, \cj \o) = 2c |\zeta|^2 \o + c \zeta^2 \cj \o$.
Since
$F_\o(\zeta, \o, \cj \zeta, \cj \o) = 2c |\zeta|^2$,
the condition \eqref{Fbez} holds if and only if $c$ is real.
Moreover, when $c$ is not real,
the condition \eqref{Fbez2} holds with $\psi(x)=1$.

\item[(d)]
Let $c_1, c_2 \in \C$.
The nonlinear term
\[
c_1 (\dx u)^2 \cj u + c_2 |\dx u|^2 u
\]
corresponds to
$F(\zeta, \o, \cj \zeta, \cj \o) = c_1 \o^2 \cj \zeta + c_2 |\o|^2 \zeta$.
Since
$F_\o(\zeta, \o, \cj \zeta, \cj \o)
= 2 c_1 \o \cj \zeta + c_2 \cj{\o} \zeta$,
we have
\begin{align*}
\qquad
&\int_\T \Im F_\o ( \psi,\dx \psi,\cj{\psi},\cj{\dx \psi} ) dx
\\
&= \int_\T \Im \big( 2 c_1 \cj{\psi} \dx \psi + c_2 \psi \cj{\dx \psi} \big) dx
\\
&= \Im (2c_1 + c_2) \int_\T \Re (\cj{\psi} \dx \psi) dx
+ \Re (2c_1-c_2) \int_\T \Im (\cj{\psi} \dx \psi) dx.
\end{align*}
By $\Re (\cj{\psi} \dx \psi) = \frac 12 \dx (|\psi|^2)$, the first term on the right-hand side vanishes.
Hence,
the condition \eqref{Fbez} holds if and only if $\Re (2c_1-c_2)=0$.
Note that the nonlinear term where $c_1$ and $c_2$ are purely imaginary is treated in \cite{Se12}.
Moreover,
when $\Re (2c_1-c_2) \neq 0$,
the condition \eqref{Fbez2} holds with $\psi(x)= e^{ix}$.

In this example, it is further observed that there are no solutions to \eqref{fNLS} with the individual nonlinear terms
$(\dx u)^2 \cj u$ and $|\dx u|^2 u$,
but there is a solution to \eqref{fNLS} with the nonlinearity given by the linear combination $(\dx u)^2 \cj u + 2 |\dx u|^2 u$.
Thus, the structure of the nonlinear term can change due to the sum. 
\end{enumerate}

For the well-posedness of \eqref{fNLS},
we employ a modified energy method.
To reveal the most problematic term in the nonlinearity,
we consider the system for $u$ and $v = \dx u$.
It follows from \eqref{fNLS} that $v$ satisfies the following equation:
\begin{equation}
\begin{aligned}
\dt v + i D^\al v
&=
F_\o (u,v, \cj u, \cj v) \dx v
+ F_{\cj \o} (u,v, \cj u, \cj v) \cj{\dx v}
\\
&\quad
+ \text{(a polynomial in $u,v, \cj u, \cj v$)}.
\end{aligned}
\label{eqvv}
\end{equation}
We can not directly apply an energy estimate to the first term on the right-hand side of \eqref{eqvv}.
In contrast, the second term on the right-hand side is harmless.

When $\al = 2$,
the gauge transformation works well to handle the problematic term.
See \cite{Chi02, KoOk25b}.
The gauge transformation means choosing an appropriate $\Ld$ such that $V := e^\Ld v$ satisfies an equation with harmless terms.
We give a rough idea of how to choose $\Ld$.
With \eqref{eqvv},
a direct calculation yields that
\begin{align*}
(\dt - i \dx^2) (e^\Ld v)
=
\big( F_\o (u,v, \cj u, \cj v) -2i \dx \Ld \big) e^\Ld \dx v
+ \text{(other terms)}.
\end{align*}
To nearly cancel the first term on the right-hand side,
we choose
\[
\Ld = 
- \frac i2 \dx^{-1}
F_\o (u,v, \cj u, \cj v).
\]
Then,
\[
V := \exp \Big( - \frac i2 \dx^{-1} F_\o (u,v,\cj u, \cj v) \Big) v
\]
satisfies the following:
\begin{equation*}
\dt V - i \dx^2 V
=
\int_\T  F_\o (u,v, \cj u, \cj v) dx
\cdot \dx V
+ \text{(harmless terms)}.
\end{equation*}
If \eqref{Fbez} holds, we can apply the (usual) energy method to this equation.

On the other hand,
when $\al >2$,
a gauge transformation can no longer be applied to eliminate the problematic term.
Indeed,
since
the fractional Leibniz rule (see \cite{Li19}, for example)
formally
yields that
\begin{equation}
\begin{aligned}
D^\al \big( fg \big)
&= (D^{\al} f) g + f D^{\al}g
\\
&\quad
- \al ( D^{\al-2} \dx f ) \dx g
- \al ( \dx f) D^{\al-2} \dx g
\\
&\quad
+\text {(other terms)},
\end{aligned}
\label{fLeib1}
\end{equation}
with \eqref{eqvv},
we have
\begin{align*}
(\dt + i D^\al) (e^\Ld v)
&=
\big( F_\o (u,v, \cj u, \cj v) e^\Ld - i \al D^{\al-2} \dx e^\Ld \big) \dx v
\\
&\quad
- i \al \big( \dx e^\Ld \big) D^{\al-2} \dx v
+ \text{(other terms)}.
\end{align*}
The first term on the right-hand side can be almost canceled by appropriately choosing $\Ld$.
However,
the second term on the right-hand side contains $D^{\al-2} \dx v$, which has a derivative of order 
$\al-1 > 1$.
As a result, applying a gauge transformation to \eqref{fNLS} when $\al > 2$ generates a new problematic term.
Therefore, 
for $\al > 2$, an alternative approach must be used in place of the gauge transformation.

To overcome this difficulty, we employ a modified energy method.
In this approach, correction terms are added to the energy functional to cancel the problematic contributions. 
Roughly speaking, the use of a correction term allows us to recover derivatives of order $\al-2$.
In our setting, since the nonlinear term contains one derivative, a single correction term suffices for $\al \ge 3$.
However, for $\al \in (2,3)$,
the correction terms must be constructed inductively.
See \eqref{modene1} below.

We employ a refined Kato–Ponce-type commutator estimate (Proposition \ref{prop:comm3}),
which is a variant of the fractional Leibniz rule,
in the construction of the correction terms.
Since $\al>2$,
terms analogous to the third term on the right-hand side of \eqref{fLeib1} appear when applying this commutator estimate.
The term generated by the $n$-th correction term cancels with the term generated by the ($n+1$)-th correction term.
Exploiting this cancellation property enables us to construct the sequence of correction terms.
See Lemma \ref{lem:modi}.

To prove well-posedness for \eqref{fNLS} when $\al>2$,
we apply the modified energy method, which differs from the approach used for $\al=2$.
Nevertheless, the two methods are closely related.
Indeed, if we formally set $\al=2$ in the correction terms we construct,
we recover the gauge transformation employed in \cite{KoOk25b}.
In this sense, the gauge transformation can be interpreted as a modified energy method with an infinite sequence of correction terms.
See Remark \ref{rem:gemod} for details.

As mentioned above, the number of correction terms changes at $\al=3$.
We also note that $\regwp$ defined in \eqref{condA}
changes its value at the same point.
This regularity threshold $\regwp$ arises from applying the modified energy method to equation \eqref{eqvv}.
Since \eqref{eqvv} contains one derivative in the nonlinear term,
the regularity of $v$ must exceed $\frac 32$ to estimate $\| \dx v \|_{L^\infty}$.
Moreover, when computing the time derivative of the modified energy (more precisely, of the correction terms), we rewrite $\dt v$ using \eqref{eqvv}.
At that stage, a derivative of order $\al$ appears
(e.g., we need to estimate $|(v, D^\al v)_{L^2}|$), and the regularity must exceed $\frac \al2$ to distribute this derivative evenly.
Since $v=\dx u$,
the corresponding regularity requirements for $u$ become 
$\frac 52 = \frac 32+1$ and $\frac \al2+1$.

\begin{remark}
\rm

As the primary aim of this paper is to establish the necessary and sufficient condition on $F$ for well-posedness,
we do not attempt to determine the optimal regularity conditions.
In particular, for certain special nonlinear terms, well-posedness can hold under weaker regularity assumptions. Indeed, from the Strichartz estimate, one can easily deduce that the cubic fNLS is well-posed in $L^2(\T)$ for $\al>2$.
Moreover, by renormalizing the cubic nonlinearity,
Brun et al. \cite{BLLZ} proved well-posedness in $H^s(\T)$ for $\al>2$ and $s> \frac{2-\al}6$.

\end{remark}

If \eqref{Fbez} does not hold,
the term $i \int_\T \Im F_\o (u,v, \cj u ,\cj v) dx$ remains
in the first term on the right-hand side of \eqref{eqvv}.
In this case,
the Cauchy-Riemann-type elliptic operator $\dt + i a \dx$ ($a \in \R$) appears in \eqref{eqvv},
suggesting that the Cauchy problem for \eqref{eqvv} may be ill-posed.

The simplest example is a linear case $F(\zeta, \o, \cj \zeta, \cj \zeta) = i \o$.
Then, \eqref{fNLS} becomes
\[
\left\{
\begin{aligned}
&\dt u + i D^\al u = i \dx u, \\ 
&u|_{t=0} = \phi.
\end{aligned}
\right.
\]
If $u \in C([0,T]; H^s(\T))$ for some $s \in \R$ is a solution to this Cauchy problem,
we have
\[
\ft u(t,k) = \ft \phi (k) e^{(-i |k|^\al - k)t}
\]
for $t \in [0,T]$ and $k \in \Z$.
Since $|\ft u(t,k)| = |\ft \phi(k)| e^{-kt}$,
we can choose $\phi$ so that
$|\ft u(t,k)|$ grows exponentially as $k \to -\infty$ when $t>0$.
In that case,
$u(t, \cdot)$ is not a distribution on $\T$,
and hence the solution does not exist.
We note that $\phi$ must be chosen appropriately.
In particular, in Theorem \ref{thm:nonexi}, we can not generally take $\psi = \phi$.
Indeed,
in this example, for any nonzero real number $c$,
the function $\psi(x)=c$ satisfies condition \eqref{Fbez2}, and $u(t,x) = c$ is a solution.

In the nonlinear case, the influence of a similar Cauchy–Riemann-type operator leads to the non-existence of solutions.
Unlike the linear case, however, we can not explicitly express the solution.
Moreover, we must control the derivatives of the nonlinear terms.
Therefore, following \cite{KiTs18, KoOk25b}, we show that Cauchy-Riemann-type operators arise from resonant interactions
and that, if a solution exists, the initial data has regularity, which yields non-existence by choosing appropriate initial data.
We note that the regularity assumption $s>\regwp$ is needed to estimate the nonlinear terms.

In the case $\al=2$ \cite{KoOk25b}, the gauge transformation was also essential in proving non-existence.
By contrast, when $\al>2$, terms except for the problematic resonant terms can be controlled by dispersive effects.
In particular, it is unnecessary to introduce multiple correction terms as in the proof of well-posedness
See Proposition \ref{prop:FGH} and its proof.

The rest of the paper is organized as follows.
In Section \ref{SEC:Bili}, we state the bilinear estimates used in the proof of Theorem \ref{thm:equiv}.
In Section \ref{SEC:wp}, after constructing the modified energy,
we prove the existence of a solution to \eqref{fNLS} and then show the uniqueness of the solution and continuous dependence on initial data, namely, the well-posedness result of Theorem \ref{thm:equiv}.
In Section \ref{SEC:nonex}, we prove the ill-posedness result of Theorem \ref{thm:equiv} by showing Theorem \ref{thm:nonexi}.

We conclude this section with notation.
For an integrable function $f$ on $\T$,
we define
\begin{align*}
\int_\T f dx := \frac 1{2\pi} \int_{-\pi}^\pi f(x) dx,
\\
\ft f(k) := \int_\T f(x) e^{-ikx} dx
\end{align*}
for $k \in \Z$.
We also denote by $\ft f(k)$ of $\Ft [f](k)$ the Fourier coefficient of a distribution $f$ on $\T$.

For $s \in \R$ and a distribution $f$ on $\T$,
we define
\[
\jb{D}^s f (x) := \sum_{k \in \Z} \jb{k}^s \ft f(k) e^{ikx},
\]
where
$\jb{k} := \sqrt{1+|k|^2}$.
We denote $H^s(\T)$ by the $L^2$-based Sobolev space on $\T$
equipped with the norm
\[
\| f \|_{H^s} :=
\bigg( \sum_{k \in \Z} \jb{k}^{2s} |\ft f(k)|^2 \bigg)^{\frac 12}
= \| \jb{D}^s f \|_{L^2}
.
\]
We also denote $\l^2_s$ by the space of square-summable sequences with the weight $\jb{k}^s$,
equipped with the norm
\[
\| a \|_{\l^2_s} :=
\bigg( \sum_{k \in \Z} \jb{k}^{2s} |a(k)|^2 \bigg)^{\frac 12}.
\]
Set $\l^2 := \l^2_0$.
We note that
\[
\| f \|_{H^s} = \| \ft f \|_{\l^2_s}.
\]

For a distribution $f$ on $\T$,
we define
\begin{align*}
P_0 f
&:=
\ft f(0),
&
P_{\neq 0} f
&:=
f - P_0 f,
\\
P_+ f (x)
&:=
\sum_{k \in \N} \ft f(k) e^{ikx},
&
P_- f (x)
&:=
\sum_{k \in \N} \ft f(-k) e^{-ikx}.
\end{align*}
Moreover, we define
\[
\dx^{-1} f (x)
:=
\sum_{k \in \Z \setminus \{0\}} \frac 1{ik} \ft f(k) e^{ikx}.
\]
When $f$ is an integrable function, we have
\begin{equation}
\dx^{-1} f (x)
= \int_\T \int_y^x P_{\neq 0} f(x') dx' dy.
\label{antder1}
\end{equation}

For $T>0$ and a Banach space $X$,
we denote the space of essentially bounded $X$-valued functions by $L^\infty([0,T]; X)$ equipped with the norm
\[
\| u \|_{L_T^\infty X}
:=
\esssup_{t \in [0,T]} 
\| u (t) \|_{X}.
\]

We use $A \les B$ to denote $A \le C B$ with a constant $C>0$.
When the $C>0$ is sufficiently small, we write $A \ll B$.
We also use $A \sim B$ to denote $A \les B$ and $A \ges B$.

\section{Bilinear estimates}
\label{SEC:Bili}

In this section, 
we state some bilinear estimates.
First, we recall the basic estimate.
See Corollary 3.16 on page 855 in \cite{Tao01} for example.
While the bilinear estimate on $\R$ is proved in \cite{Tao01},
it follows from a straightforward modification of the proof.

\begin{proposition}
\label{prop:bili}
Let $s_0,s_1,s_2 \in \R$ satisfy
\[
\min (s_0+s_1, s_1+s_2, s_2+s_0) \ge 0 , \quad
s_0+s_1+s_2 > \frac 12
\]
or
\[
\min (s_0+s_1, s_1+s_2, s_2+s_0) > 0 , \quad
s_0+s_1+s_2 \ge \frac 12.
\]
Then, we have
\[
\| fg \|_{H^{-s_0}}
\les \| f \|_{H^{s_1}} \| g \|_{H^{s_2}}
\]
for $f \in H^{s_1}(\T)$ and $g \in H^{s_2}(\T)$.
\end{proposition}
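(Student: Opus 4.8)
The plan is to dualize and reduce the product bound to a single weighted trilinear sum on the Fourier side. By duality,
\[
\| fg \|_{H^{-s_0}}
= \sup_{\| h \|_{H^{s_0}} \le 1}
\Big| \int_\T fg \cj h \, dx \Big|,
\]
and Parseval's identity gives $\int_\T fg\cj h\,dx = \sum_{k_0 = k_1+k_2} \ft f(k_1) \ft g(k_2) \cj{\ft h(k_0)}$. Setting $a(k) := \jb{k}^{s_1} |\ft f(k)|$, $b(k) := \jb{k}^{s_2} |\ft g(k)|$, and $c(k) := \jb{k}^{s_0} |\ft h(k)|$, which lie in $\l^2$ with $\| a \|_{\l^2} = \| f \|_{H^{s_1}}$, $\| b \|_{\l^2} = \| g \|_{H^{s_2}}$, and $\| c \|_{\l^2} = \| h \|_{H^{s_0}}$, it suffices to establish
\[
\sum_{k_0 = k_1 + k_2}
\frac{a(k_1) b(k_2) c(k_0)}{\jb{k_1}^{s_1} \jb{k_2}^{s_2} \jb{k_0}^{s_0}}
\les \| a \|_{\l^2} \| b \|_{\l^2} \| c \|_{\l^2}.
\]

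The geometric input is that, since $k_0 = k_1 + k_2$, the two largest among $\jb{k_0}, \jb{k_1}, \jb{k_2}$ are always comparable. I would split the sum into three regions according to which frequency is smallest; by the symmetry of the hypotheses in $s_0, s_1, s_2$ these are interchangeable, so consider the region where $\jb{k_1}$ is smallest, whence $\jb{k_0} \sim \jb{k_2}$. There the idea is to transfer the exponent of the smallest frequency onto the comparable large one so that only nonnegative powers remain: if $s_1 \ge 0$, I keep $\jb{k_1}^{-s_1} \jb{k_2}^{-(s_0+s_2)}$, using $\jb{k_0} \sim \jb{k_2}$ together with the pairwise condition $s_0 + s_2 \ge 0$; if $s_1 < 0$, I bound $\jb{k_1}^{-s_1} \les \jb{k_2}^{-s_1}$ and obtain $\jb{k_2}^{-(s_0+s_1+s_2)}$. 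In either case the weight is dominated by $\jb{k_1}^{-\al} \jb{k_2}^{-\be}$ with $\al, \be \ge 0$ and $\al + \be = s_0 + s_1 + s_2$.

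Two successive applications of the Cauchy--Schwarz inequality then reduce the trilinear bound to an elementary convolution estimate. Writing $G(k_0) := \sum_{k_1 + k_2 = k_0} a(k_1) b(k_2) \jb{k_1}^{-\al} \jb{k_2}^{-\be}$, pairing $c(k_0)$ against the remaining factors produces $\| c \|_{\l^2} \| G \|_{\l^2}$, and a second Cauchy--Schwarz in the convolution variable bounds $\| G \|_{\l^2}$ by $\| a \|_{\l^2} \| b \|_{\l^2}$ times $\big( \sup_{k_0} \sum_{k_1 + k_2 = k_0} \jb{k_1}^{-2\al} \jb{k_2}^{-2\be} \big)^{\frac 12}$, the inner sum being restricted to the region under consideration. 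It thus remains to show that this restricted convolution of power weights is bounded uniformly in $k_0$.

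The main obstacle is precisely this uniform Schur-type bound at the borderline, and it is here that the two alternative hypotheses enter. When $\al, \be \ge 0$ and $\al + \be = s_0 + s_1 + s_2 > \frac 12$, the restricted sum is uniformly bounded by a direct computation, which closes the first hypothesis. At the critical value $\al + \be = \frac 12$, however, a logarithmic divergence appears exactly when one of the effective exponents---necessarily a pairwise sum $s_i + s_j$---vanishes; the strict pairwise conditions in the second hypothesis rule this configuration out. The delicate points are therefore the bookkeeping of always moving a negative individual index onto the comparable large frequency so that $\al, \be \ge 0$, and the verification that each of the two hypotheses excludes the marginal divergence. The remaining estimates are routine and parallel the proof of Corollary 3.16 in \cite{Tao01}.
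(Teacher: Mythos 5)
Your proof is correct. Note, however, that the paper does not prove Proposition \ref{prop:bili} at all: it simply cites Corollary 3.16 of \cite{Tao01} (proved on $\R$) and asserts that the periodic case follows by a straightforward modification. Your argument is therefore a genuinely self-contained alternative, and it is sound: duality and Parseval reduce the claim to the weighted trilinear sum over $k_0=k_1+k_2$; the comparability of the two largest of $\jb{k_0},\jb{k_1},\jb{k_2}$, combined with the case split on the sign of the exponent attached to the smallest frequency, always yields a dominating weight $\jb{k_1}^{-\al}\jb{k_2}^{-\be}$ with $\al,\be\ge 0$ and $\al+\be=s_0+s_1+s_2$ (the pairwise condition $s_0+s_2\ge 0$ is exactly what makes $\be\ge0$ when $s_1\ge 0$, and each of the three pairwise conditions gets used in one of the three regions); and the double Cauchy--Schwarz reduces everything to $\sup_{k_0}\sum_{k_1+k_2=k_0}\jb{k_1}^{-2\al}\jb{k_2}^{-2\be}<\infty$ on the restricted region. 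Your identification of the only divergent configuration is also right: a logarithm appears exactly when $\al=\tfrac12$ and $\be=0$, which forces $s_1=\tfrac12$ and the pairwise sum $s_0+s_2=0$, and this is ruled out by the strict inequality $s_0+s_1+s_2>\tfrac12$ under the first hypothesis and by strict pairwise positivity under the second; in the remaining cases ($2\al>1$, or $2\al<1$ with $\al+\be\ge\tfrac12$) the Schur sum is uniformly bounded. What your route buys is a complete, elementary proof written directly on $\Z$, with the borderline bookkeeping explicit; what the paper's route buys is brevity and an appeal to a known sharp result, at the cost of leaving the $\R\to\T$ adaptation to the reader.
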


Moreover, the following bilinear estimate holds.
See Lemma B.1 in \cite{II01} for example.

\begin{proposition}
\label{prop:bili2}
Let $s \ge 0$ and $r>\frac 12$.
Then, we have
\[
\| fg \|_{H^s}
\les \| f \|_{H^s} \| g \|_{H^{r}} + \| f \|_{H^{r}} \| g \|_{H^s}
\]
for any $f,g \in H^s(\T) \cap H^{r}(\T)$.
\end{proposition}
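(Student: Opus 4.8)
The plan is to move to the Fourier side and reduce the estimate to Young's convolution inequality, which is the standard and most economical route for tame product bounds on the torus. Recalling that $\| fg \|_{H^s} = \| \jb{k}^s \ft{fg} \|_{\l^2}$ and that $\ft{fg}(k) = \sum_{k_1+k_2=k} \ft f(k_1) \ft g(k_2)$, the first step is to distribute the weight $\jb{k}^s$ onto the two factors. Using the elementary bound $\jb{k_1+k_2} \les \jb{k_1}+\jb{k_2}$ together with $(a+b)^s \les a^s + b^s$ for $s \ge 0$ and $a,b \ge 0$, I would obtain the pointwise estimate
\[
\jb{k}^s |\ft{fg}(k)|
\les \sum_{k_1+k_2=k} \jb{k_1}^s |\ft f(k_1)|\, |\ft g(k_2)|
+ \sum_{k_1+k_2=k} |\ft f(k_1)|\, \jb{k_2}^s |\ft g(k_2)|.
\]

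Each term on the right is a convolution of two nonnegative sequences. Taking the $\l^2$-norm in $k$ and applying Young's inequality in the form $\| a * b \|_{\l^2} \le \| a \|_{\l^2} \| b \|_{\l^1}$, the first term is controlled by $\| \jb{k}^s \ft f \|_{\l^2} \| \ft g \|_{\l^1} = \| f \|_{H^s} \| \ft g \|_{\l^1}$, and the second symmetrically by $\| f \|_{H^r}$-type quantities after the next step. It then remains only to absorb the $\l^1$-norms of the Fourier coefficients.

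The final step converts $\| \ft g \|_{\l^1}$ into an $H^r$-norm. By Cauchy--Schwarz,
\[
\| \ft g \|_{\l^1}
= \sum_{k \in \Z} \jb{k}^{-r} \jb{k}^r |\ft g(k)|
\le \Big( \sum_{k \in \Z} \jb{k}^{-2r} \Big)^{1/2} \| g \|_{H^r},
\]
and the series $\sum_{k} \jb{k}^{-2r}$ converges precisely because $r > \frac 12$; this is the only place the hypothesis on $r$ enters. The identical bound gives $\| \ft f \|_{\l^1} \les \| f \|_{H^r}$, and combining the two pieces yields the claimed inequality.

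I do not expect any serious obstacle, as the argument is entirely elementary. The one point requiring care is the weight-splitting inequality $\jb{k}^s \les \jb{k_1}^s + \jb{k_2}^s$ on the diagonal $k = k_1 + k_2$, which genuinely uses $s \ge 0$; for $s < 0$ the output weight can be large relative to both input weights and the estimate fails in this symmetric form. It is also worth noting that the convergence of $\sum_k \jb{k}^{-2r}$ is the borderline condition $r > \frac 12$, reflecting the Sobolev embedding $H^r(\T) \hookrightarrow L^\infty(\T)$ (equivalently, summability of the Fourier coefficients) that underlies the algebra-type property of $H^s$.
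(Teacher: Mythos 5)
Your proof is correct and complete: the weight-splitting $\jb{k}^s \les \jb{k_1}^s + \jb{k_2}^s$ (valid for $s \ge 0$), Young's inequality $\|a \ast b\|_{\l^2} \le \|a\|_{\l^2}\|b\|_{\l^1}$, and the Cauchy--Schwarz absorption of the $\l^1$-norm using $r > \frac12$ together give exactly the stated bound. The paper itself offers no proof of this proposition, citing only Lemma B.1 of \cite{II01}; your argument is the standard one behind that reference, so there is nothing to compare beyond noting that you have supplied in full what the paper leaves to the literature.
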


By Proposition \ref{prop:bili2}, we get the following estimate.
\begin{corollary}
\label{cor:bili3}
Let
\[
s> \frac 32,
\quad
r \ge 1.
\]
Then,
we have
\begin{align}
&\|F(f,\dx f, \cj f, \cj{\dx f})\|_{H^{r-1}} 
\le C(\|f\|_{H^s}) (1 + \|\dx f\|_{H^{r-1}}),
\label{bilinn1}
\\
&
\begin{aligned}
&\| F(f,\dx f, \cj f, \cj{\dx f}) - F(g,\dx g, \cj g, \cj{\dx g})\|_{H^{r-1}} 
\\
&\quad \le C(\|f\|_{H^{\max (s,r)}}, \|g\|_{H^{\max (s,r)}}) \|f -g \|_{H^{r}}
\end{aligned}
\label{bilinn2}
\end{align}
for $f,g \in H^s(\T) \cap H^r(\T)$.
\end{corollary}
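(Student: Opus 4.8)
The plan is to prove Corollary \ref{cor:bili3} by reducing the estimates on the nonlinearity $F$ to the bilinear estimate of Proposition \ref{prop:bili2}, exploiting that $F$ is a polynomial and that $H^s(\T)$ with $s>\frac32$ is a Banach algebra. Throughout I will write $\vec f := (f, \dx f, \cj f, \cj{\dx f})$ to compress notation.

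First I would establish \eqref{bilinn1}. Since $F$ is a polynomial in its four arguments, it suffices to prove the estimate for a single monomial, say a product of $N$ factors each of which is one of $f, \dx f, \cj f, \cj{\dx f}$; the general case follows by summing over the finitely many monomials. The key observation is a regularity bookkeeping: at most we care about the $H^{r-1}$ norm of such a product. I would apply Proposition \ref{prop:bili2} (with the exponent $s$ there taken to be $r-1 \ge 0$ and $r$ there taken to be some fixed $\rho \in (\frac12, s-1]$, which is possible since $s>\frac32$) iteratively over the $N$ factors. At each step the $H^{r-1}$ norm lands on one factor while all remaining factors are measured in $H^\rho$. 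Now a factor $f$ or $\cj f$ in $H^{r-1}$ is bounded by $\|f\|_{H^{r-1}} \le \|f\|_{H^r} \sim 1 + \|\dx f\|_{H^{r-1}}$ (using $\|f\|_{H^r}^2 = |\widehat f(0)|^2 + \|\dx f\|_{H^{r-1}}^2$-type control together with $\|f\|_{H^s}$ absorbing the zero mode), and a factor $\dx f$ or $\cj{\dx f}$ in $H^{r-1}$ is exactly $\|\dx f\|_{H^{r-1}}$. The remaining factors measured in $H^\rho$ are all bounded by $\|f\|_{H^s}$ since $\rho \le s-1 < s$ and $\|\dx f\|_{H^\rho} \le \|f\|_{H^{\rho+1}} \le \|f\|_{H^s}$. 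Collecting, each monomial is bounded by $C(\|f\|_{H^s})(1 + \|\dx f\|_{H^{r-1}})$, which is linear in $\|\dx f\|_{H^{r-1}}$ as claimed.

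For the Lipschitz-type bound \eqref{bilinn2}, I would use the standard telescoping (difference-of-products) identity. Writing $F(\vec f) - F(\vec g)$ monomial by monomial, each monomial difference $\prod_i a_i - \prod_i b_i$ (with $a_i, b_i \in \{f,\dx f,\cj f,\cj{\dx f}\}$ or their $g$-counterparts) telescopes into a sum of terms, each containing exactly one difference factor $a_j - b_j$ and the rest being $a_i$ or $b_i$. The difference factor is one of $f-g$, $\dx(f-g)$, $\cj{f-g}$, $\cj{\dx(f-g)}$, each controlled in $H^{r-1}$ by $\|f-g\|_{H^r}$. I would again apply Proposition \ref{prop:bili2}: place the $H^{r-1}$ norm on the difference factor and measure all other factors in $H^\rho$ with $\rho \in (\frac12, \max(s,r)]$ so that they are dominated by $\|f\|_{H^{\max(s,r)}}$ and $\|g\|_{H^{\max(s,r)}}$. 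Summing over the finitely many monomials and telescoped terms yields the constant $C(\|f\|_{H^{\max(s,r)}}, \|g\|_{H^{\max(s,r)}})$.

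**The main obstacle** is bookkeeping rather than analysis: one must verify at each application of Proposition \ref{prop:bili2} that the auxiliary exponent $\rho$ satisfies $\frac12 < \rho \le s-1$ (so $H^\rho$ absorbs the derivative-loaded factors $\dx f$ into $\|f\|_{H^s}$) while simultaneously ensuring $r-1 \ge 0$, and then to track which exponent controls the leading factor in the homogeneity count so that the final bound is genuinely \emph{linear} in $\|\dx f\|_{H^{r-1}}$ in \eqref{bilinn1}. The condition $s>\frac32$ is exactly what makes $s-1>\frac12$, leaving room for a valid $\rho$; this is the crux and explains the hypothesis. The rest is a routine, if slightly tedious, induction on the number of factors in each monomial.
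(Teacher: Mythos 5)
Your treatment of \eqref{bilinn1} is correct and is essentially the paper's own proof: iterate Proposition \ref{prop:bili2} over the factors of each monomial with an auxiliary index $\rho \in (\frac12, s-1]$, and absorb $\|f\|_{H^{r-1}} \les \|f\|_{H^s} + \|\dx f\|_{H^{r-1}}$ into the claimed bound. The gap is in \eqref{bilinn2}. Proposition \ref{prop:bili2} is symmetric, so you cannot ``place the $H^{r-1}$ norm on the difference factor'': it yields a sum of terms in which the $H^{r-1}$ norm lands on \emph{each} factor in turn, and the cross terms are fatal when $r$ is small. Take a telescoped term containing both $\dx f$ and $\dx(f-g)$, and take $r=1$ (a case the paper actually needs, e.g.\ in \eqref{difTa} for the difference estimates). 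Proposition \ref{prop:bili2} then produces the term $\|\dx f\|_{L^2}\,\|\dx(f-g)\|_{H^{\rho}}\cdot(\cdots)$ with $\rho>\frac12$, and $\|\dx(f-g)\|_{H^{\rho}} \le \|f-g\|_{H^{\rho+1}}$ can be dominated by $\|f-g\|_{H^{r}}$ only if $\rho+1\le r$, i.e.\ $\rho\le r-1 \le 0$, contradicting $\rho>\frac12$; bounding it by $\|f-g\|_{H^{s}}$ instead destroys the Lipschitz dependence on $\|f-g\|_{H^r}$ that \eqref{bilinn2} asserts. The same obstruction occurs for all $1\le r\le\frac32$, so your argument as written is sound only for $r>\frac32$ (in particular for $r\ge s$). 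A smaller slip of the same kind: for non-difference factors such as $\dx f$ you need $\rho\le\max(s,r)-1$, not merely $\rho\le\max(s,r)$.

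The missing ingredient is the asymmetric bilinear estimate, Proposition \ref{prop:bili}, and this is exactly how the paper closes the case $r<s$: it exhibits a single difference factor (via the fundamental theorem of calculus, writing the difference as $(f-g)$, $\dx(f-g)$, etc.\ times coefficients of the form $\int_0^1 F_{\o}\big(\theta f+(1-\theta)g,\dots\big)\,d\theta$; your telescoping would serve the same purpose), bounds each coefficient in $H^{s-1}$ by \eqref{bilinn1}, and then applies Proposition \ref{prop:bili} with $(s_0,s_1,s_2)=(1-r,\,r-1,\,s-1)$, for which $s_0+s_1=0$, $s_0+s_2=s-r>0$, and $s_0+s_1+s_2=s-1>\frac12$. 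This genuinely puts the low norm $H^{r-1}$ on the difference factor and charges all remaining regularity to the coefficient. If you keep your telescoping but replace Proposition \ref{prop:bili2} by Proposition \ref{prop:bili} in the regime $r<s$ (using Proposition \ref{prop:bili2} only when $r\ge s$), your proof closes.
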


\begin{proof}
Recall that $F(f,\dx f, \cj f, \cj{\dx f})$ is a polynomial.
By Proposition \ref{prop:bili2} and $s > \frac32$, we have
\[
\begin{aligned}
\|F(f,\dx f, \cj f, \cj{\dx f}) \|_{H^{r-1}} 
&\le C( \|f\|_{H^s})(1 + \|f\|_{H^{r-1}} + \|\dx f\|_{H^{r-1}})
\\
&\le C( \|f\|_{H^s})(1 + \|\dx f\|_{H^{r-1}}).
\end{aligned}
\]
This shows \eqref{bilinn1}.

Next, we consider \eqref{bilinn2}.
Set
\[
\Ta_\zeta [f] := F_\zeta(f,\dx f, \cj f, \cj{\dx f}),
\quad
\Ta_\o [f] := F_\o (f,\dx f, \cj f, \cj{\dx f})
\]
for simplicity.
A direct calculation shows that
\begin{align*}
&F(f,\dx f, \cj f, \cj{\dx f}) - F(g,\dx g, \cj g, \cj{\dx g})
\\
&=
(f-g) \cdot \int_0^1 \Ta_\zeta [\theta f + (1-\theta) g] d\theta
\\
&\quad
+ \cj{(f-g)} \cdot \int_0^1 \Ta_{\cj \zeta} [\theta f + (1-\theta) g] d\theta
\\
&\quad
+
\dx (f-g) \cdot \int_0^1 \Ta_\o [\theta f + (1-\theta) g] d\theta
\\
&\quad
+ \cj{\dx (f-g)} \cdot \int_0^1 \Ta_{\cj \o} [\theta f + (1-\theta) g] d\theta.
\end{align*}
When $r \ge s$,
Proposition \ref{prop:bili2} yields \eqref{bilinn2}.
When $r<s$, Proposition \ref{prop:bili} and \eqref{bilinn1} yield that
\begin{align*}
&\bigg\| \cj{\dx (f-g)} \cdot \int_0^1 \Ta_{\cj \o} [\theta f + (1-\theta) g] d\theta \bigg\|_{H^{r-1}}
\\
&\les
\| f-g \|_{H^r}
\int_0^1
\| \Ta_{\cj \o} [\theta f + (1-\theta) g] \|_{H^{s-1}} d\theta
\\
&\le
C( \| f \|_{H^s}, \| g \|_{H^s})
\| f-g \|_{H^r}.
\end{align*}
Since the remaining parts are similarly handled,
we obtain \eqref{bilinn2}.
\end{proof}

Next, we state some commutator estimates.
Define
\[
[ \jb{D}^s, f] g
:=
\jb{D}^s (fg) - f \jb{D}^s g
\]
for $s \in \R$ and $f,g \in H^s(\T) \cap L^2(\T)$.
This operator is continuously extended as follows:

\begin{proposition}
\label{prop:commC}
Let $s \ge 0$ and $\eps>0$.
Then,
we have
\begin{align*}
\| [ \jb{D}^s, f] \dx g \|_{L^2}
&\les
\| f \|_{H^{\frac 32+\eps}} \| g \|_{H^s}
+ \| f \|_{H^{s}} \| g \|_{H^{\frac 32+\eps}}
\end{align*}
for any $f,g \in H^{\max (s,\frac 32+\eps)}(\T)$.
\end{proposition}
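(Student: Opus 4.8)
The plan is to pass to the Fourier side, reduce the bound to a pointwise estimate on the commutator symbol, split into two frequency regimes, and recombine by a product estimate. Writing $h := [\jb{D}^s, f]\dx g$, its Fourier coefficients are
\[
\ft h(k) = \sum_{k_1+k_2=k}(\jb{k}^s - \jb{k_2}^s)(ik_2)\,\ft f(k_1)\,\ft g(k_2),
\]
so with the symbol $m(k_1,k_2) := (\jb{k_1+k_2}^s - \jb{k_2}^s)k_2$ it suffices to estimate $\big\|\sum_{k_1+k_2=\cdot}|m(k_1,k_2)||\ft f(k_1)||\ft g(k_2)|\big\|_{\l^2_0}$. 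I would split according to whether the $g$-frequency or the $f$-frequency dominates: region A where $|k_1|\le|k_2|/2$, and its complement region B where $|k_2|<2|k_1|$.

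In region A the crux is a one-derivative gain from the commutator structure. Applying the mean value theorem to $t\mapsto\jb{t}^s$, whose derivative satisfies $|\tfrac{d}{dt}\jb{t}^s|\les\jb{t}^{s-1}$, and using that for $|k_1|\le|k_2|/2$ every point $\xi$ between $k_2$ and $k_1+k_2$ satisfies $\jb{\xi}\sim\jb{k_2}$, I obtain $|\jb{k_1+k_2}^s-\jb{k_2}^s|\les|k_1|\jb{k_2}^{s-1}$ and hence $|m|\,\ind_A\les|k_1|\jb{k_2}^s$; the extra weight $|k_2|$ coming from $\dx$ is exactly absorbed. In region B I would use only the crude bound: there $|k|,|k_2|\les|k_1|$, so $|\jb{k}^s-\jb{k_2}^s|\les\jb{k_1}^s$ and thus $|m|\,\ind_B\les\jb{k_1}^s\jb{k_2}$ (the contribution of $k_2=0$ vanishes because of the factor $k_2$).

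With these symbol bounds I dominate $\ft h(k)$ by Fourier coefficients of products of auxiliary functions with nonnegative coefficients. For region A, $\sum_{k_1+k_2=k}|k_1||\ft f(k_1)|\jb{k_2}^s|\ft g(k_2)|$ is controlled by $\ft{(AB)}(k)$ with $|\ft A(k_1)|=\jb{k_1}|\ft f(k_1)|$ and $|\ft B(k_2)|=\jb{k_2}^s|\ft g(k_2)|$; placing $A\in L^\infty$ via the embedding $H^{\frac12+\eps}(\T)\hookrightarrow L^\infty(\T)$ and $B\in L^2$, Hölder gives $\|AB\|_{L^2}\le\|A\|_{L^\infty}\|B\|_{L^2}\les\|A\|_{H^{\frac12+\eps}}\|B\|_{L^2}=\|f\|_{H^{\frac32+\eps}}\|g\|_{H^s}$. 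Symmetrically, region B with $|\ft C(k_1)|=\jb{k_1}^s|\ft f(k_1)|$ and $|\ft D(k_2)|=\jb{k_2}|\ft g(k_2)|$, placing the $g$-factor $D$ in $L^\infty$, yields $\|f\|_{H^s}\|g\|_{H^{\frac32+\eps}}$. Summing the two regions gives the asserted bound.

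I expect the main obstacle to be region A, where one must extract \emph{exactly} one derivative from the difference $\jb{k}^s-\jb{k_2}^s$ so that, after multiplication by the symbol $k_2$ of $\dx$, the net weight on the high frequency is only $\jb{k_2}^s$ rather than $\jb{k_2}^{s+1}$. The point requiring care is that this mean-value estimate must hold uniformly down to $s=0$, where $t\mapsto\jb{t}^{s-1}$ is decreasing; this is why one uses the two-sided localization $\jb{\xi}\sim\jb{k_2}$ rather than a one-sided monotonicity argument. The remaining ingredients—the crude symbol bound in region B and the $L^\infty\!\cdot L^2$ product estimates—are routine.
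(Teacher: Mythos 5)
Your proof is correct. The core estimate is the same one the paper relies on: the mean value theorem applied to $t \mapsto \jb{t}^s$, with the two-sided localization $\jb{k_2+\theta k_1} \sim \jb{k_2}$ (valid because $|k_1| \le |k_2|/2$), which gains exactly one derivative and is uniform down to $s=0$; your attention to that point is exactly right.

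Where you genuinely diverge from the paper is the decomposition and the treatment of the high-high interaction. The paper splits into \emph{three} regions according to the $g$-frequency $k'$ versus the \emph{output} frequency $k$: $\I$ ($|k'| \ge 2|k|$), $\II$ ($|k'| < |k|/2$), $\III$ ($|k'| \sim |k|$); you split into \emph{two} regions according to the $f$-frequency versus the $g$-frequency. Your region A sits inside the paper's region $\III$, and your region B absorbs $\I$, $\II$, and the rest of $\III$. The substantive difference shows up in the high-high-to-low interaction (the paper's region $\I$, where $|k_1| \sim |k_2| \ges |k|$): the paper keeps the $H^s$ weight on $g$ and one derivative on $f$, then exploits the off-diagonal gain $\jb{k-k'} \ges \jb{k}$ via Cauchy--Schwarz to produce a factor $\jb{k}^{-\frac12-\eps}$, landing on the term $\|f\|_{H^{\frac32+\eps}}\|g\|_{H^s}$; you instead put the $H^s$ weight on $f$ and the derivative plus $\l^1$-summation on $g$, landing on $\|f\|_{H^s}\|g\|_{H^{\frac32+\eps}}$. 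Since the asserted right-hand side contains both terms, either distribution is admissible, and your argument is slightly more economical: two cases and nothing beyond Young's inequality (your $L^\infty \cdot L^2$ H\"older step is Young's $\l^1 * \l^2 \subset \l^2$ in disguise). What the paper's sharper treatment buys is reusability: the Cauchy--Schwarz-with-decay device in regions $\I$ and $\II$ is precisely what carries over to Proposition \ref{prop:commne}, the $s<0$ variant in which $g$ may only be measured in $L^2$ and no $\frac32+\eps$ derivatives can ever be placed on $g$; your region-B argument as written would not close there without switching to that device.
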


While the proof is a slight modification of that of Proposition 2.5 in \cite{KoOk25b},
we give a proof for completeness.

\begin{proof}
It suffices to prove the estimate for
$f,g \in H^{s+\frac 32+\eps}(\T)$.
A direct calculation shows that
\begin{equation}
\begin{aligned}
&\Ft \big[ \jb{D}^s (f \dx g) - f \jb{D}^s \dx g \big] (k)
\\
&= \sum_{k' \in \Z \setminus \{ 0 \}} \big( \jb{k}^s  - \jb{k'}^s ) ik' \ft f(k-k')\ft g(k') \\
&=
\bigg( \sum_{\substack{k' \in \Z \setminus \{ 0 \} \\ |k'| \ge 2|k|}}
+ \sum_{\substack{k' \in \Z \setminus \{ 0 \} \\ |k'|< \frac{|k|}2}}
+ \sum_{\substack{k' \in \Z \setminus \{ 0 \} \\ \frac{|k|}2 \le |k'|< 2|k|}} \bigg)
\big( \jb{k}^s  - \jb{k'}^s ) ik' \ft f(k-k')\ft g(k')
\\
&=: \I (k) + \II (k) + \III (k)
\end{aligned}
\label{dec1}
\end{equation}
for $k \in \Z$.
We treat these three cases separately.

When $|k'| \ge 2|k|$,
it holds that
$|k-k'| \sim |k'| \ges |k|$.
With $s \ge 0$ and $\eps>0$,
we have
\begin{align*}
|\I (k)|
&\les
\sum_{\substack{k' \in \Z \\ |k'| \ge 2|k|}}
\jb{k-k'} |\ft f(k-k')| \jb{k'}^s |\ft g(k')|\\
&\les
\bigg( \sum_{\substack{k' \in \Z \\ |k'| \ge 2|k|}}
\jb{k-k'}^2 |\ft f(k-k')|^2 \bigg)^{\frac 12}
\| g \|_{H^s}
\\
&\les
\jb{k}^{-\frac 12-\eps}
\| f \|_{H^{\frac 32+\eps}}
\| g \|_{H^s}.
\end{align*}
This yields that
\begin{equation}
\| \I \|_{\l^2}
\les
\| f \|_{H^{\frac 32+\eps}}
\| g \|_{H^s}.
\label{In1}
\end{equation}

When $0<|k'|< \frac{|k|}2$,
it holds that
$|k| \sim |k-k'| \ges |k'|$.
With $s \ge 0$,
we have
\begin{align*}
|\II (k)|
&\les
\sum_{\substack{k' \in \Z \\ |k'| < \frac{|k|}2}}
\jb{k-k'}^{s} |\ft f(k-k')|
\jb{k'}|\ft g(k')|.
\end{align*}
By Minkowski's integral inequality (with counting measures),
we have
\begin{equation}
\begin{aligned}
\| \II \|_{\l^2}
\les
\| f \|_{H^{s}}
\sum_{k' \in \Z} \jb{k'} |\ft g(k')|
\les
\| f \|_{H^{s}} \| g \|_{H^{\frac 32+\eps}}
\end{aligned} 
\label{IIn1}
\end{equation}
for $\eps>0$.

When
$\frac{|k|}2 \le |k'| < 2|k|$,
it holds that
\begin{align*}
\big| \jb{k}^s - \jb{k'}^s \big|
\les \jb{k-k'} \jb{k'}^{s-1}.
\end{align*}
Indeed,
if $|k-k'| \ge \frac{|k'|}2$,
the inequality follows from $|k-k'| \sim |k| \sim |k'|$.
If $|k-k'| < \frac{|k'|}2$,
the fundamental theorem of calculus yields that
\begin{align*}
\big| \jb{k}^s - \jb{k'}^s \big|
&= \bigg| (k-k') \int_0^1 s \jb{k' + \theta (k-k')}^{s-2} (k' + \theta (k-k')) d\theta \bigg|
\\
&\sim |k-k'| \jb{k'}^{s-1}.
\end{align*}
Hence,
with $\wt k = k-k'$,
we have
\begin{align*}
|\III (k)|
&\les
\sum_{\substack{k' \in \Z \\ \frac{|k|}2 \le |k'| < 2|k|}}
\jb{k-k'} |\ft f(k-k')| \jb{k'}^s |\ft g(k')|\\
&=
\sum_{\substack{\wt k \in \Z \\ \frac{|k|}2 \le |k-\wt k| < 2|k|}}
\jb{\wt k} |\ft f(\wt k)| \jb{k-\wt k}^s |\ft g(k-\wt k)|.
\end{align*}
By Minkowski's integral inequality (with counting measures),
we have
\begin{equation}
\| \III \|_{\l^2}
\les
\sum_{\wt k \in \Z}
\jb{\wt k} |\ft f(\wt k)|
\cdot
\| g \|_{H^s}
\les
\| f \|_{H^{\frac 32+\eps}}
\| g \|_{H^s}
\label{IIIn1}
\end{equation}
for $\eps>0$.

From
\eqref{dec1}--\eqref{IIIn1},
we obtain the desired estimate.
\end{proof}

We also need the estimate with negative regularity.

\begin{proposition}
\label{prop:commne}
Let $s < 0$ and $\eps>0$.
Then,
we have
\begin{align*}
\| [\jb{D}^s, f] \dx g \|_{L^2}
&\les
\| f \|_{H^{\frac 32+\eps}}
\| g \|_{L^2}
\end{align*}
for any
$f \in H^{\frac 32+\eps}(\T)$ and $g \in L^2(\T)$.
\end{proposition}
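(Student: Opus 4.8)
The plan is to mirror the proof of Proposition \ref{prop:commC}, exploiting that the regularity index is now negative. By density it suffices to establish the bound for trigonometric polynomials $f$ and $g$, so that all Fourier-side manipulations are legitimate and we may pass to the limit afterwards. Exactly as in \eqref{dec1}, the Fourier coefficient of the commutator is
\[
\Ft \big[ [\jb{D}^s, f]\dx g \big](k) = \sum_{k' \in \Z \setminus \{0\}} \big( \jb{k}^s - \jb{k'}^s \big) i k' \ft f(k-k') \ft g(k'),
\]
and I split the sum into the same three regions $|k'| \ge 2|k|$, $|k'| < \frac{|k|}2$, and $\frac{|k|}2 \le |k'| < 2|k|$, writing the corresponding pieces $\I(k)$, $\II(k)$, $\III(k)$.

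The key observation, which replaces the $s \ge 0$ bounds used before, is that for $s < 0$ one has $\jb{k}^s \le 1$ and $\jb{k'}^s \le 1$; hence $|\jb{k}^s - \jb{k'}^s| \le 2$, and moreover $\jb{k'}^{s-1}|k'| \les \jb{k'}^s \le 1$. This allows me to discard every Sobolev weight on $g$ and retain only $\| g \|_{L^2}$. In the first two regions one has $|k-k'| \sim |k|$ and $|k'| \les \jb{k-k'}$, so the derivative $k'$ is transferred onto $f$: after Cauchy--Schwarz in $k'$ and the factorization $\jb{k-k'}^2 \les \jb{k}^{-1-2\eps}\jb{k-k'}^{3+2\eps}$, I obtain $|\I(k)| + |\II(k)| \les \jb{k}^{-\frac 12-\eps}\| f\|_{H^{\frac 32+\eps}}\| g\|_{L^2}$, which is $\l^2$-summable in $k$.

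For the diagonal region I use the mean-value bound $|\jb{k}^s - \jb{k'}^s| \les \jb{k-k'}\jb{k'}^{s-1}$, established exactly as in the proof of Proposition \ref{prop:commC}: when $|k-k'| < \frac{|k'|}2$ the fundamental theorem of calculus together with $|k'+\theta(k-k')| \sim |k'|$ gives the estimate, while when $|k-k'| \ge \frac{|k'|}2$ it follows from $|k| \sim |k'|$ and $\jb{k}^s \sim \jb{k'}^s$. Combining this with $\jb{k'}^{s-1}|k'| \les 1$ reduces $\III$ to a sum of the form $\sum_{\wt k}\jb{\wt k}|\ft f(\wt k)|\,|\ft g(k-\wt k)|$ over the block $\frac{|k|}2 \le |k-\wt k| < 2|k|$, and Minkowski's integral inequality (with counting measures) together with $\sum_{\wt k}\jb{\wt k}|\ft f(\wt k)| \les \| f\|_{H^{\frac 32+\eps}}$ yields $\|\III\|_{\l^2} \les \| f\|_{H^{\frac 32+\eps}}\| g\|_{L^2}$.

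Summing the three contributions gives the claim. The only step requiring genuine attention is the diagonal mean-value estimate for negative $s$: one must check that the factor $\jb{k'+\theta(k-k')}^{s-1}$ produced upon differentiating stays comparable to $\jb{k'}^{s-1}$ on the relevant block, and that the negative sign of $s$ does not interfere. The remaining work is the same chain of Cauchy--Schwarz and Minkowski steps as in Proposition \ref{prop:commC}, here simplified by the uniform bounds $\jb{\cdot}^s \le 1$ that let the entire $g$-factor be controlled in $L^2$.
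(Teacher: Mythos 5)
Your proof is correct and follows essentially the same route as the paper: the same three-region decomposition, Cauchy--Schwarz placing the full weight $\jb{k-k'}$ on $f$ in the regions $|k'| \ge 2|k|$ and $|k'| < \frac{|k|}2$ (exploiting $\jb{\cdot}^s \le 1$ for $s<0$), and the unchanged mean-value/Minkowski argument on the diagonal block, where the paper simply reuses the region-$\III$ estimate from Proposition \ref{prop:commC} and absorbs $\|g\|_{H^s} \le \|g\|_{L^2}$. One cosmetic slip: in the region $|k'| \ge 2|k|$ one has $|k-k'| \sim |k'| \ges |k|$ rather than $|k-k'| \sim |k|$, but the only facts you actually use there --- $|k'| \les \jb{k-k'}$ and $\jb{k-k'} \ges \jb{k}$ --- do hold, so the estimate stands.
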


\begin{proof}
It suffices to prove the estimate for
$f,g \in H^{\frac 32+\eps}(\T)$.
We decompose
$\Ft \big[ \jb{D}^s (f \dx g) - f \jb{D}^s \dx g \big] (k)$
into three parts as in \eqref{dec1}.
We treat these three cases separately.

When $|k'| \ge 2|k|$,
it holds that
\[
|k-k'| \sim |k'| \ges |k|.
\]
With $s < 0$ and $\eps>0$,
we have
\begin{align*}
|\I (k)|
&\les
\sum_{\substack{k' \in \Z \\ |k'| \ge 2|k|}}
|k-k'| |\ft f(k-k')|  |\ft g(k')|
\\
&\les
\bigg( \sum_{\substack{k' \in \Z \\ |k'| \ge 2|k|}}
\jb{k-k'}^2 |\ft f(k-k')|^2 \bigg)^{\frac 12}
\| g \|_{L^2}
\\
&\les
\jb{k}^{-\frac 12-\eps}
\| f \|_{H^{\frac 32+\eps}}
\| g \|_{L^2}.
\end{align*}
This yields that
\begin{equation}
\| \I \|_{\l^2}
\les
\| f \|_{H^{\frac 32+\eps}}
\| g \|_{L^2}.
\label{In1n}
\end{equation}

When $0<|k'|< \frac{|k|}2$,
it holds that
\[
|k| \sim |k-k'| \ges |k'|.
\]
It follows from $s<0$ that
\[
|k|^s \les |k'|^s \les 1.
\]
Hence,
we have
\begin{align*}
|\II (k)|
&\les
\sum_{\substack{k' \in \Z \\ |k'| < \frac{|k|}2}}
\jb{k-k'} |\ft f(k-k')| |\ft g(k')|\\
&\les
\bigg( \sum_{\substack{k' \in \Z \\ |k'| < \frac{|k|}2}}
\jb{k-k'}^2 |\ft f(k-k')|^2 \bigg)^{\frac 12}
\| g \|_{L^2}
\\
&\les
\jb{k}^{-\frac 12-\eps}
\| f \|_{H^{\frac 32+\eps}}
\| g \|_{L^2}.
\end{align*}
A similar calculation as in \eqref{In1n} shows that
\begin{equation}
\| \II \|_{\l^2}
\les
\| f \|_{H^{\frac 32+\eps}} \| g \|_{L^2}.
\label{IIn1n}
\end{equation}

Note that
in the proof of Proposition \ref{prop:commC},
we did not use the condition $s \ge 0$ to obtain the estimate $\III$.
Hence, the same calculation works well in this case
and we obtain \eqref{IIIn1} for $\eps>0$.
Hence,
from
\eqref{In1n}, \eqref{IIn1n}, and \eqref{IIIn1},
we obtain the desired estimate.
\end{proof}

We also use a commutator estimate with a higher-order term.
Li \cite{Li19} proved commutator estimates (Theorem 6.2 in \cite{Li19}),
but the form we wish to use in this paper is not presented as such.
Therefore, here we describe a variant of the refinements of the commutator estimate that suits our purpose.

We define operator 
\begin{equation}
\Com{s}{f}(g):= \jb{D}^s(fg) - f \jb{D}^sg + s (\dx f) \jb{D}^{s-2}\dx g
\label{comm}
\end{equation}
for $s>0$ and $f,g \in H^s (\T)$.
This commutator operator
is continuously (uniquely) extended as follows:

\begin{proposition}
\label{prop:comm3}
Let $s > 2$ and $\eps >0$. Then, we have
\[
\| \Com{s}{f} (g) \|_{L^2}
\les
\| f \|_{H^{\max (s,\frac 52+\eps)}} \| g \|_{H^{\min (s-2, \frac 12+\eps)}}
+ \|f \|_{H^{\frac52+\eps}} \|g\|_{H^{s-2}}
\]
for any $f \in H^{\max(s,\frac52+\eps)}(\T)$ and  $g \in H^{s-2}(\T)$.
\end{proposition}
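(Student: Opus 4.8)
The plan is to run a Fourier-side Littlewood--Paley trichotomy entirely parallel to the proofs of Propositions \ref{prop:commC} and \ref{prop:commne}, the one new ingredient being a second-order Taylor expansion of the multiplier that is tailored to the correction term in \eqref{comm}. By density it suffices to treat, say, $f,g \in H^{s+3}(\T)$. A direct computation gives, for $k \in \Z$,
\[
\Ft \big[ \Com{s}{f}(g) \big](k) = \sum_{k' \in \Z} m(k,k')\, \ft f(k-k') \ft g(k'), \qquad m(k,k') := \jb{k}^s - \jb{k'}^s - s(k-k')\, k'\, \jb{k'}^{s-2},
\]
the last term being present only for $k' \neq 0$. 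The key observation is that, writing $\phi(\xi) := \jb{\xi}^s = (1+\xi^2)^{s/2}$, one has $\phi'(\xi) = s \xi \jb{\xi}^{s-2}$, so that
\[
m(k,k') = \phi\big(k' + (k-k')\big) - \phi(k') - (k-k')\, \phi'(k')
\]
is precisely the second-order Taylor remainder of $\phi$ at $k'$ with increment $k-k'$. Since $\phi''(\xi) = s \jb{\xi}^{s-2} + s(s-2) \xi^2 \jb{\xi}^{s-4}$ obeys $|\phi''(\xi)| \les \jb{\xi}^{s-2}$, this identity supplies the crucial extra order of cancellation in the regime where $|k-k'| \ll |k'|$, upgrading the $\jb{k-k'}\jb{k'}^{s-1}$ bound of the plain commutator to $\jb{k-k'}^2 \jb{k'}^{s-2}$.

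As in \eqref{dec1} I would split the sum according to $|k'| \ge 2|k|$, $|k'| < \tfrac{|k|}2$, and $\tfrac{|k|}2 \le |k'| < 2|k|$, calling the pieces $\I, \II, \III$. In the first region $|k-k'| \sim |k'| \ges |k|$, so the crude bound $|m(k,k')| \les \jb{k'}^s \sim \jb{k-k'}^2 \jb{k'}^{s-2}$ together with $\jb{k-k'}^{-1/2-\eps} \les \jb{k}^{-1/2-\eps}$, followed by Cauchy--Schwarz and $\jb{\cdot}^{-1/2-\eps} \in \l^2$ exactly as in \eqref{In1}, gives $\| \I \|_{\l^2} \les \| f \|_{H^{5/2+\eps}} \| g \|_{H^{s-2}}$. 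In the second region $g$ sits at low frequency, $|k| \sim |k-k'| \ges |k'|$, and only $|m| \les \jb{k-k'}^s$ is available; here I would load $\max(s,\tfrac52+\eps)$ derivatives onto $f$ and, using $\jb{k-k'} \ges \jb{k'}$ to estimate the negative leftover power, pass the remainder to $g$. This produces $\| f \|_{H^s} \| g \|_{H^{1/2+\eps}}$ when $s \ge \tfrac52+\eps$ and $\| f \|_{H^{5/2+\eps}} \| g \|_{H^{s-2}}$ when $2 < s < \tfrac52+\eps$, in either case $\les \| f \|_{H^{\max(s,5/2+\eps)}} \| g \|_{H^{\min(s-2,1/2+\eps)}}$ (the two right-hand exponents collapsing to a single estimate in the lower range). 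The $k'=0$ contribution $(\jb{k}^s - 1) \ft f(k) \ft g(0)$ belongs to this region and is trivially $\les \| f \|_{H^s} \| g \|_{L^2}$.

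The decisive piece is the comparable region $\tfrac{|k|}2 \le |k'| < 2|k|$, which I would split further by the size of $|k-k'|$. When $|k-k'| \ges \tfrac{|k'|}2$ all three frequencies are comparable, the crude bound $|m| \les \jb{k'}^s \sim \jb{k-k'}^2 \jb{k'}^{s-2}$ applies, and extracting $\jb{k-k'}^{-1/2-\eps} \sim \jb{k}^{-1/2-\eps}$ again yields $\| f \|_{H^{5/2+\eps}} \| g \|_{H^{s-2}}$. When instead $|k-k'| < \tfrac{|k'|}2$, so $f$ is at low frequency and $|k-k'| \ll |k'|$, the Taylor identity and $|\phi''(k'+\theta(k-k'))| \les \jb{k'}^{s-2}$ (valid since $|k'+\theta(k-k')| \sim |k'|$) give the gain
\[
|m(k,k')| \les (k-k')^2 \jb{k'}^{s-2} \les \jb{k-k'}^2 \jb{k'}^{s-2};
\]
summing by Minkowski's inequality in the variable $k-k'$ against $\| g \|_{H^{s-2}}$, and bounding $\sum_{k-k'} \jb{k-k'}^2 |\ft f(k-k')| \les \| f \|_{H^{5/2+\eps}}$ by Cauchy--Schwarz, gives $\| \III \|_{\l^2} \les \| f \|_{H^{5/2+\eps}} \| g \|_{H^{s-2}}$. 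Combining the three regions yields the claim.

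I expect the main obstacle to be exactly this comparable region: it is the only place where the correction term in \eqref{comm} must genuinely be used, and one must verify that the second-order Taylor remainder truly produces the factor $\jb{k-k'}^2 \jb{k'}^{s-2}$ rather than the $\jb{k-k'} \jb{k'}^{s-1}$ coming from $\jb{D}^s(fg) - f \jb{D}^s g$ alone. The two facts that make this work are the pointwise bound $|\phi''(\xi)| \les \jb{\xi}^{s-2}$ and the comparability $|k'+\theta(k-k')| \sim |k'|$ on the remainder interval; a minor secondary nuisance is reconciling the two terms on the right-hand side, which is what forces the case distinction $s \gtrless \tfrac52+\eps$ in the low-frequency-$g$ region.
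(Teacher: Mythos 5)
Your proposal is correct and is essentially the paper's own proof: the paper likewise identifies the multiplier $m(k,k')$ as the second-order Taylor remainder of $\jb{\xi}^s$ (its identity \eqref{conb}) and runs the same exponent bookkeeping, merely merging your regions $\I$ and $\III$ into the single region $|k'| \ge \frac{|k|}2$, where the bound $|m(k,k')| \les \jb{k-k'}^2 \jb{k'}^{s-2}$ holds uniformly because $s>2$ makes $\jb{\cdot}^{s-2}$ increasing. Your low-frequency-$g$ region $\II$, including the $\max/\min$ case distinction at $s = \frac 52+\eps$ and the trivial $k'=0$ term, coincides with the paper's treatment of $\I_2$ in \eqref{comIIk}.
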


\begin{proof}
It suffices to prove the estimate
for any $f \in H^{s+\frac52+\eps}(\T)$ and  $g \in H^{s}(\T)$.
A direct calculation shows that
\begin{equation}
\begin{aligned}
&\Ft [ \jb{D}^s(f g) - f \jb{D}^s g + s \, \dx f \jb{D}^{s-2} \dx g](k)
\\
&=
\sum_{k' \in \Z}
\Big( \jb{k}^s 
-
\jb{k'}^s
-
s \, (k - k') k' \jb{k'}^{s-2} \Big)
\ft f(k - k') \ft g(k')
\\
&=
\bigg( \sum_{\substack{k' \in \Z \\ |k'| \ge \frac{|k|}2}}
+ \sum_{\substack{k' \in \Z \\ |k'|< \frac{|k|}2}}
\bigg)
\Big( \jb{k}^s 
-
\jb{k'}^s
-
s \, (k - k') k' \jb{k'}^{s-2} \Big)
\\
&\hspace*{50pt}
\times
\ft f(k-k') \ft g(k')
\\
&=:
\I_1(k) + \I_2(k)
\end{aligned}
\label{comft}
\end{equation}
for $ k \in \Z$.
Note that
\begin{equation}
\begin{aligned}
&\jb{k}^s - \jb{k'}^s - s \, (k - k') k' \jb{k'}^{s-2} 
\\
&=
s (k - k')
\int_0^1 \big( (\ta (k-k') + k') \jb{\ta (k-k') + k'}^{s-2} - k' \jb{k'}^{s-2} \big) d\ta
\\
&=
s (k - k')^2 \int_0^1 \ta \int_0^1 \big( 1+ (s-1) (\eta \ta (k-k') +k')^2 \big)
\\
&\hspace*{50pt}
\times
\jb{\eta \ta (k-k') + k'}^{s-4} d\eta  d\ta.
\end{aligned}
\label{conb}
\end{equation}

First, we consider $\I_1(k)$.
With $\wt k = k-k'$,
\eqref{comft}, \eqref{conb}, and $s>2$ yield that
\[
\begin{aligned}
|\I_1(k)|
&\les
\sum_{\substack{\wt k \in \Z \\ |k-\wt k| \ge \frac{|k|}2}}
\jb{\wt k}^2 |\ft f(\wt k)| \jb{k-\wt k}^{s-2} |\ft g(k-\wt k)|.
\end{aligned}
\]
By Minkowski's integral inequality,
we have
\begin{equation}
\| \I_1 \|_{\l^2}
\les
\sum_{\substack{\wt k \in \Z}}
\jb{\wt k}^2 |\ft f(\wt k)|
\| g \|_{H^{s-2}}
\les
\|f \|_{H^{\frac 52+\eps}}
\| g \|_{H^{s-2}}
\label{comIk}
\end{equation}
for $\eps>0$.

Second,
we consider $\I_2(k)$. Note that $\I_2(0) = 0$.
When $|k'| < \frac{|k|}2$, it holds that $|k - k'| \sim |k| \ges |k'|$.
It follows from \eqref{comft} that
\[
\begin{aligned}
|\I_2(k)|
&\les
\sum_{\substack{k' \in \Z \\ |k'| < \frac{|k|}2}}
\jb{k - k'}^s |\ft f(k - k')|
|\ft g(k')|.
\end{aligned}
\]
Minkowski's integral inequality yields that
\begin{equation}
\begin{aligned}
\| \I_2 \|_{\l^2}&\les
\sum_{\substack{k' \in \Z}}
\Big(
\sum_{\substack{ k \in \Z \\ |k|>2|k'|}}
\jb{k - k'}^{2s} |\ft f(k - k')|^2
\Big)^{\frac 12}
|\ft g(k')|
\\
&\les
\| f \|_{H^{\max (s,\frac 52+\eps)}}
\sum_{\substack{k' \in \Z}}
\jb{k'}^{\min (0, s-\frac 52-\eps)}
|\ft g(k')|
\\
&\les
\| f \|_{H^{\max (s,\frac 52+\eps)}}
\| g \|_{H^{\min (s-2, \frac 12+\eps)}}
\end{aligned}
\label{comIIk}
\end{equation}
for $\eps>0$.

Combining \eqref{comft}, \eqref{comIk}, and \eqref{comIIk}, we get the desired bound.
\end{proof}

\begin{remark}
\rm
The estimate
\[
\| \Com{s}{f} (g) \|_{L^2}
\les
(\|f \|_{H^{s}} + \|f \|_{H^{\frac52+\eps}}) \|g\|_{H^{s-2}}
\]
is enough to prove our main result.
Note that ``$\| g \|_{H^{\frac 12+\eps}}$'' does not appear on the right-hand side.
If such a term were present,
an additional condition $\al \ge 3$ would be required in the main result.
See \eqref{remi3} below, for example.
\end{remark}

\section{Well-posedness}
\label{SEC:wp}

In this section,
we prove the well-posedness part of Theorem \ref{thm:equiv} using the modified energy method.
For simplicity, we only consider a solution to \eqref{fNLS} in a positive time interval.
Throughout this section,
we assume that $F$ satisfies \eqref{Fbez}.

\subsection{Energy estimates}
\label{SUBSEC:energy}

In this subsection,
we prove the existence of a solution to \eqref{fNLS} as a limit of the following
the parabolic regularization of \eqref{fNLS}:
\begin{equation} 
\left\{
\begin{aligned}
&\dt u^{\eps} + iD^\al u^{\eps} - \eps \dx^2 u^\eps = F(u^{\eps}, \dx u^{\eps}, \cj {u^{\eps}}, \cj{\dx u^{\eps}}), \\ 
&u^{\eps}(0,x) = \phi(x)
\end{aligned}
\right.
\label{RfNLS}
\end{equation}
for $\eps>0$, $t>0$, and $x \in \T$.
Owing to the parabolic regularization,
the existence of a solution to \eqref{RfNLS} can be established via the contraction mapping argument.

\begin{proposition}
\label{prop:exRfNLS}
Let $\eps>0$ and $s> \frac 32$.
Then,
\eqref{RfNLS} is well-posed in $H^s(\T)$.
In particular,
for any $\phi \in H^s(\T)$,
there exist $T_\eps>0$ and a unique solution $u^\eps \in C([0,T_\eps]; H^s(\T))$ to \eqref{RfNLS}.
Moreover,
the solution depends continuously on the initial data.
\end{proposition}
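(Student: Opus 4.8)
The plan is to recast \eqref{RfNLS} as a fixed-point problem and exploit the smoothing of the parabolic regularization to absorb the derivative loss in the nonlinearity. Write $F(u)$ as shorthand for $F(u,\dx u,\cj u,\cj{\dx u})$, and let $S_\eps(t)$ denote the linear propagator with Fourier multiplier $e^{-i|k|^\al t - \eps k^2 t}$, so that a solution to \eqref{RfNLS} is a fixed point of
\[
\Phi(u)(t) := S_\eps(t)\phi + \int_0^t S_\eps(t-\tau) F(u)(\tau)\, d\tau
\]
on $X_T := C([0,T]; H^s(\T))$. The decisive input is the smoothing estimate
\[
\| S_\eps(t) f \|_{H^{s+1}} \les \big(1 + (\eps t)^{-\frac 12}\big) \| f \|_{H^s}, \qquad t > 0,
\]
which I would derive from the elementary bound $\jb{k}^2 e^{-2\eps k^2 t} \les 1 + (\eps t)^{-1}$ on the Fourier side; note that the dispersive factor $e^{-i|k|^\al t}$ has modulus one and plays no role here. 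Since the exponent $-\tfrac12$ is integrable, $\int_0^t \big(1 + (\eps(t-\tau))^{-1/2}\big)\,d\tau \les T + \eps^{-1/2}T^{1/2} \to 0$ as $T \to 0$. Strong continuity of $t \mapsto S_\eps(t)f$ in $H^s$, hence $S_\eps(\cdot)\phi \in X_T$, follows from dominated convergence applied to the Fourier series.

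First I would establish the self-mapping property. By Corollary \ref{cor:bili3} with $r = s$ (legitimate since $s > \tfrac32 \ge 1$), one has $\|F(u)\|_{H^{s-1}} \le C(\|u\|_{H^s})$, so the smoothing estimate (with the one derivative to be recovered matching the gain) gives
\[
\Big\| \int_0^t S_\eps(t-\tau) F(u)(\tau)\,d\tau \Big\|_{H^s} \les \big(T + \eps^{-\frac12}T^{\frac12}\big)\, C\big(\|u\|_{X_T}\big).
\]
Hence $\Phi$ maps the ball $B_R := \{u \in X_T : \|u\|_{X_T} \le R\}$, with $R := 2\|\phi\|_{H^s}$, into itself once $T = T_\eps$ is small enough (depending on $\eps$ and $R$). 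For the contraction I would invoke the difference estimate \eqref{bilinn2} with $r = s$, that is, $\|F(u) - F(v)\|_{H^{s-1}} \le C(\|u\|_{H^s},\|v\|_{H^s})\|u-v\|_{H^s}$, together with the same smoothing bound, to obtain
\[
\| \Phi(u) - \Phi(v) \|_{X_T} \les \big(T + \eps^{-\frac12}T^{\frac12}\big)\, C(R)\, \|u - v\|_{X_T}.
\]
Shrinking $T_\eps$ further renders this constant $< 1$, so Banach's fixed point theorem produces a unique fixed point $u^\eps \in B_R$, giving a solution $u^\eps \in C([0,T_\eps]; H^s(\T))$. Uniqueness in all of $X_{T_\eps}$, not merely in $B_R$, then follows from the same difference estimate and Gronwall's inequality.

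For continuous dependence, if $\phi_n \to \phi$ in $H^s$ then on a common existence interval the solutions satisfy $u_n - u = S_\eps(\cdot)(\phi_n - \phi) + \int_0^{\cdot} S_\eps(\cdot - \tau)\big(F(u_n) - F(u)\big)\,d\tau$; estimating in $H^s$ through \eqref{bilinn2} and the smoothing bound and applying Gronwall yields $\|u_n - u\|_{X_T} \to 0$. The main obstacle is precisely the balance between the single derivative lost in the nonlinearity (Corollary \ref{cor:bili3} maps $H^s$ into $H^{s-1}$) and the derivative recovered by the parabolic smoothing; the argument hinges on the gain carrying the \emph{integrable} singularity $(\eps t)^{-1/2}$, so that exactly one derivative is recovered at a cost that vanishes with $T$. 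I stress that every constant degenerates as $\eps \to 0$ (the factor $\eps^{-1/2}$), so this proposition only furnishes solutions on the $\eps$-dependent interval $[0,T_\eps]$; the $\eps$-uniform control required to pass to the limit $\eps \to 0$ is exactly what the modified-energy estimates of the following subsections are designed to supply.
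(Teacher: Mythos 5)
Your proposal is correct and follows essentially the same route as the paper: the same Duhamel formulation, the same parabolic smoothing bound $\jb{k}e^{-\eps t k^2}\les 1+(\eps t)^{-1/2}$ recovering the one lost derivative at an integrable cost, the same use of Corollary \ref{cor:bili3} (estimates \eqref{bilinn1} and \eqref{bilinn2} with $r=s$) for the self-mapping and contraction steps on the ball of radius $2\|\phi\|_{H^s}$. The only difference is that you spell out the uniqueness-in-$X_T$ and continuous-dependence arguments (via Gronwall, which with the weakly singular kernel requires the standard singular-Gronwall or subinterval iteration), whereas the paper dismisses these as standard and omits them.
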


\begin{proof}
We consider the corresponding integral equation to \eqref{RfNLS}:
\[
u^\eps(t) = U_\eps (t) \phi + \int_0^t U_\eps(t-t') F(u^{\eps}, \dx u^{\eps}, \cj {u^{\eps}}, \cj{\dx u^{\eps}})(t') dt' ,
\]
where $U_\eps(t) = e^{t(-i D^\al + \eps \dx^2)}$.

Let $T > 0$. For $\phi \in H^s(\T)$ and $u^\eps \in C([0,T];H^s(\T))$, we define
\[
\Phi(u^\eps)(t):= U_\eps (t) \phi + \int_0^t U_\eps(t-t') F(u^{\eps}, \dx u^{\eps}, \cj {u^{\eps}}, \cj{\dx u^{\eps}}) (t') dt' .
\]
By a direct calculation, we have
\begin{equation}
\|U_\eps(t) \phi \|_{H^s}
=
\bigg( \sum_{k \in \Z} \jb{k}^{2s} e^{-2\eps tk^2} |\ft \phi |^2 \bigg)^{\frac12}
\le
\| \phi \|_{H^s}.
\label{line}
\end{equation}
Since
\[
\jb{k} e^{-\eps (t-t')k^2}
\le
1 + \frac1{(\eps(t-t'))^{\frac12}},
\]
we have
\begin{equation}
\begin{aligned}
&\bigg\| \int_0^t U_\eps(t-t') F(u^{\eps}, \dx u^{\eps}, \cj {u^{\eps}}, \cj{\dx u^{\eps}}) (t') dt' \bigg\|_{H^s}
\\
&\les
\int_0^t
\Big( 1 + \frac 1{(\eps (t-t'))^{\frac 12}} \Big)
\| F(u^{\eps}, \dx u^{\eps}, \cj {u^{\eps}}, \cj{\dx u^{\eps}}) (t') \|_{H^{s-1}} dt'
\\
&\le
C (\eps)
(t+t^{\frac12}) \sup_{t' \in [0,t]} \| F(u^{\eps}, \dx u^{\eps}, \cj {u^{\eps}}, \cj{\dx u^{\eps}}) (t') \|_{H^{s-1}}.
\end{aligned}
\label{dua}
\end{equation}
From \eqref{line}, \eqref{dua}, and Corollary \ref{cor:bili3}, there exists $C(\eps,\|u^\eps\|_{L_T^\infty H^s}) > 0$ such that
\[
\| \Phi(u^\eps)\|_{L_T^\infty H^s} 
\le
\|\phi\|_{H^s} + C(\eps,\|u^\eps\|_{L_T^\infty H^s})(T + T^{\frac12}).
\]
Set
\[
B_T:= \{ u \in C([0,T];H^s(\T)) \mid \| u \|_{L_T^\infty H^s} \le 2 \| \phi \|_{H^s} \}.
\]
By taking $T_{\eps,1}>0$ as 
\[
T_{\eps,1} + T_{\eps,1}^{\frac12} \le \frac{\|\phi\|_{H^s}}{C(\eps, 2\|\phi\|_{H^s})},
\] 
$\Phi$ is a map on $B_{T_{\eps,1}}$.
A similar calculation as in \eqref{dua} yields that for $u_1^\eps, u_2^\eps \in C([0,T]; H^s(\T))$, there exists $C_0(\eps, \|u_1^\eps\|_{L_T^\infty H^s}, \|u_2^\eps\|_{L_T^\infty H^s}) > 0$ such that
\[
\begin{aligned}
&\| \Phi(u_1^\eps) - \Phi(u_2^\eps) \|_{L_T^\infty H^s}
\\
&\quad
\le
C_0 (\eps, \|u_1^\eps\|_{L_T^\infty H^s}, \|u_2^\eps\|_{L_T^\infty H^s}) (T + T^{\frac12}) \| u_1^\eps - u_2^\eps\|_{L_T^\infty H^s}.
\end{aligned}
\]
We take $T_{\eps,2} >0$ with
\[
T_{\eps,2} + T_{\eps,2}^{\frac12} \le \frac{1}{2 C_0(\eps, 2\|\phi\|_{H^s}, 2\|\phi\|_{H^s})}.
\] 
Set
\[
T_\eps := \min (T_{\eps,1}, T_{\eps,2}).
\] 
Then,
$\Phi$ is a contraction map on $B_{T_\eps}$.
Thus, \eqref{RfNLS} has a solution. 

The uniqueness and the continuous dependence
follow from a standard argument.
We thus omit the details here.
\end{proof}

In this subsection,
we assume that
$s>\frac 32$ and $\phi \in H^s(\T)$.
Let $u^\eps$ be the solution to \eqref{RfNLS} obtained in Proposition \ref{prop:exRfNLS}.
Since $F$ is a polynomial, the solution $u^\eps$ belongs to $C([0,T_\eps];H^s(\T)) \cap C^\infty((0,T_\eps] \times \T)$.

Set
\[
v^{\eps}
=
\dx u^{\eps}.
\]
For simplicity, we write
\begin{align*}
\Ta^{\eps} &= F(u^{\eps},v^{\eps}, \cj {u^{\eps}} ,\cj {v^{\eps}}), &
\Ta_{\zeta}^{\eps} &= F_{\zeta} (u^{\eps},v^{\eps}, \cj {u^{\eps}} ,\cj {v^{\eps}}), &
& \ \dots.
\end{align*}
Then,
$u^\eps$ satisfies
\begin{equation}
\dt u^{\eps} + i D^\al u^\eps -\eps \dx^2 u^\eps
= \Ta^\eps .
\label{ueps}
\end{equation}
By the assumption \eqref{Fbez}, 
$v^{\eps}$ satisfies
\begin{equation}
\begin{aligned}
&\dt v^{\eps} + iD^\al v^\eps - \eps \dx^2 v^\eps
\\
&=
i (P_{\neq 0} \Im \Ta^{\eps}_{\o}) \dx v^\eps
+(\Re \Ta^\eps_\o) \dx v^\eps
+\Ta^{\eps}_{\cj \o} \cj{\dx v^{\eps}}
+\Rl^{\eps},
\end{aligned}
\label{veps}
\end{equation}
where 
\[
\Rl^\eps = \Ta_{\zeta}^\eps v^\eps
+\Ta_{\cj \zeta}^\eps \cj{v^\eps}. 
\] 
Note that $\Rl^\eps$ is the polynomial in $u^\eps, v^\eps, \cj{u^\eps}, \cj{v^\eps}$.

In what follows,
we assume that $r \ge 1$ and $\phi \in H^s(\T) \cap H^r(\T)$.%
\footnote{Since the proof of Proposition \ref{prop:exRfNLS} is a standard contraction argument,
the persistence of regularity holds.
In particular,
$T_\eps$ is independent of $\| \phi \|_{H^r}$ and
the solution $u^\eps$ belongs to $C([0,T_\eps]; H^r(\T))$
even when $r>s$ and $\phi \in H^r(\T)$.}
Let
$\al >2$ and $\eps \in (0,1)$.
For
$n \in \N$ and $t \in [0, T_\eps]$,
we define the correction term
\begin{align}
L_{n,\eps}^r(t) 
&:=
c_n \int_\T \big( \dx^{-1} \Im \Ta_\o^\eps (t) \big)^n
\big| \jb{D}^{r-1-\frac {(\al-2)n}2} v^\eps(t) \big|^2 dx,
\label{eq:Ln}
\end{align}
where
\begin{equation}
c_n:= \frac{2^n}{\al^n n!}.
\label{eq:Lncon1}
\end{equation}
We have the following bound.

\begin{lemma}
\label{lem:ene1L}
Let $\al >2$ and $N \in \N$ satisfy
\begin{align}
N - 1 < \frac 1{\al-2} \le N.
\label{conN}
\end{align}
Moreover, let $n \in \{ 1, \dots, N \}$.
Then,
there exists $a_n>0$
such that
\[
\begin{aligned}
|L_{n,\eps}^r (t)|
&\le
\frac 1{10 n^2} \| v^\eps (t) \|_{H^{r-1}}^2 
+
a_n \| u^\eps (t) \|_{H^{r-1}}^2 \| \Ta_\o^\eps (t) \|_{L^2}^{2N}
\end{aligned}
\]
for $r \ge 1$, $\eps \in (0,1)$, and $ t \in [0,T_\eps]$.
\end{lemma}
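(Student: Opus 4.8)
The plan is to bound the correction term $L_{n,\eps}^r(t)$ by interpolating the fractional Sobolev norm appearing in the integrand against the lower-regularity factor $(\dx^{-1}\Im\Ta_\o^\eps)^n$. The key observation is that the exponent $r-1-\frac{(\al-2)n}{2}$ in \eqref{eq:Ln} is strictly less than $r-1$ precisely because $\al>2$, so there is room to gain a small power of $\|v^\eps\|_{H^{r-1}}$ and absorb the deficit into a factor built from $\Ta_\o^\eps$.

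First I would estimate $L_{n,\eps}^r$ by H\"older's inequality, pulling the factor $(\dx^{-1}\Im\Ta_\o^\eps)^n$ out in $L^\infty$:
\[
|L_{n,\eps}^r(t)|
\le
|c_n|\,\big\| \dx^{-1}\Im\Ta_\o^\eps(t) \big\|_{L^\infty}^n
\big\| \jb{D}^{r-1-\frac{(\al-2)n}{2}} v^\eps(t) \big\|_{L^2}^2.
\]
For the $L^\infty$ factor I would use \eqref{antder1} together with Proposition \ref{prop:bili2} (or a direct Sobolev embedding, since $\dx^{-1}$ gains a derivative) to bound $\|\dx^{-1}\Im\Ta_\o^\eps\|_{L^\infty}\les \|\Ta_\o^\eps\|_{L^2}$, using that $\Ta_\o^\eps$ is a polynomial in $u^\eps,v^\eps$ and that $s>\frac32$ controls the lower-order factors. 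For the $L^2$ factor I would apply the interpolation inequality
\[
\big\| \jb{D}^{r-1-\frac{(\al-2)n}{2}} v^\eps \big\|_{L^2}
=
\| v^\eps \|_{H^{r-1-\frac{(\al-2)n}{2}}}
\le
\| v^\eps \|_{H^{r-1}}^{1-\theta}\,\|v^\eps\|_{H^{-1}}^{\theta}
\]
for a suitable $\theta\in(0,1)$ determined by the regularity gap $\frac{(\al-2)n}{2}$, and then rewrite $\|v^\eps\|_{H^{-1}}=\|\dx u^\eps\|_{H^{-1}}\les \|u^\eps\|_{L^2}\le \|u^\eps\|_{H^{r-1}}$.

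Combining these gives a bound of the form $\|\Ta_\o^\eps\|_{L^2}^n\,\|v^\eps\|_{H^{r-1}}^{2(1-\theta)}\,\|u^\eps\|_{H^{r-1}}^{2\theta}$, and I would finish with Young's inequality $XY\le \delta X^p + C_\delta Y^{p'}$ applied so that the $\|v^\eps\|_{H^{r-1}}^{2(1-\theta)}$ factor is raised to the conjugate exponent $\frac{1}{1-\theta}$, producing the full power $\|v^\eps\|_{H^{r-1}}^2$ with a small constant $\frac{1}{10n^2}$, while the remaining factors collect into $\|u^\eps\|_{H^{r-1}}^2$ times a power of $\|\Ta_\o^\eps\|_{L^2}$. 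Here the constraint \eqref{conN} on $N$ and the range $n\le N$ ensure the book-keeping of exponents is uniform and that the power of $\|\Ta_\o^\eps\|_{L^2}$ can be taken to be exactly $2N$ (absorbing any lower powers using that $\|\Ta_\o^\eps\|_{L^2}$ may be small or large, via the polynomial structure and Young's inequality again).

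I expect the main obstacle to be the exponent bookkeeping: one must check that the interpolation parameter $\theta$ coming from the gap $\frac{(\al-2)n}{2}$ stays in $(0,1)$ for every $n\in\{1,\dots,N\}$ (which is exactly what \eqref{conN} guarantees, since $n\le N$ forces $\frac{(\al-2)n}{2}\le \frac{(\al-2)N}{2}$ to remain a controllable fraction of $r-1-(-1)=r$), and that after Young's inequality the power of $\|\Ta_\o^\eps\|_{L^2}$ can be normalized to the single uniform value $2N$ rather than an $n$-dependent exponent. The constant $a_n$ is then harmless, depending only on $n$, $\delta=\frac{1}{10n^2}$, the polynomial $F$, and $\|u^\eps\|_{H^s}$ through the $L^\infty$ estimate for $\dx^{-1}\Im\Ta_\o^\eps$.
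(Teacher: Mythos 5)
Your overall skeleton (H\"older with $\|(\dx^{-1}\Im\Ta_\o^\eps)^n\|_{L^\infty}$, the bound $\|\dx^{-1}\Im\Ta_\o^\eps\|_{L^\infty}\les\|\Ta_\o^\eps\|_{L^2}$ from \eqref{antder1}, interpolation, then Young) matches the paper's, but the interpolation step---which is the actual content of the lemma---does not work as you set it up. You interpolate between $H^{r-1}$ and the \emph{fixed} endpoint $H^{-1}$, so your parameter is $\theta=\frac{(\al-2)n}{2r}$. Two problems follow. First, the inequality you wrote is not even available when $r-1-\frac{(\al-2)n}{2}<-1$ (e.g.\ $\al>4$, which forces $N=n=1$, together with $r$ near $1$): the target index then lies below the lower endpoint, so no $\theta\in(0,1)$ exists. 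Second, and more seriously, after Young's inequality with conjugate exponents $\frac1{1-\theta}$, $\frac1{\theta}$, the power of $\|\Ta_\o^\eps\|_{L^2}$ that comes out is $\frac n\theta=\frac{2r}{\al-2}$: it depends on $r$ and in general differs from $2N$. Your suggestion to ``normalize'' this exponent to $2N$ by a further application of Young's inequality cannot work: for $q\neq 2N$ there is no bound $x^{q}\le C x^{2N}$ valid for all $x>0$ (it fails as $x\to0$ if $q<2N$ and as $x\to\infty$ if $q>2N$), and the conclusion of the lemma must have exactly the form $a_n\|u^\eps\|_{H^{r-1}}^2\|\Ta_\o^\eps\|_{L^2}^{2N}$ with $a_n$ uniform in $r\ge1$, $\eps$, and $t$, with no additive constant and no spare powers to hide behind, because this expression is inserted verbatim into the modified energy \eqref{modene1} and the coercivity bound \eqref{ene1}.

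The repair is to force the interpolation exponent to be $\frac n{2N}$ by construction, independently of $r$ and $\al$, which is what the paper does: write $r-1-\frac{(\al-2)n}{2}$ as the convex combination of $r-1-\frac{(\al-2)N}{2}$ and $r-1$ with weights $\frac nN$ and $1-\frac nN$; use \eqref{conN} (i.e.\ $(\al-2)N\ge1$) only to discard regularity via $\|v^\eps\|_{H^{r-1-\frac{(\al-2)N}2}}\le\|v^\eps\|_{H^{r-\frac32}}$; and then split $\|v^\eps\|_{H^{r-\frac32}}\le\|v^\eps\|_{H^{r-2}}^{1/2}\|v^\eps\|_{H^{r-1}}^{1/2}\le\|u^\eps\|_{H^{r-1}}^{1/2}\|v^\eps\|_{H^{r-1}}^{1/2}$. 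The point you are missing is that the low endpoint must be $H^{r-2}$ (converted into $\|u^\eps\|_{H^{r-1}}$), which moves with $r$ so that the gap to $H^{r-1}$ is always exactly one derivative; a fixed endpoint such as $H^{-1}$ forces $\theta\to0$ as $r\to\infty$ and destroys the uniform exponent. With these choices the product becomes $c_n\|\dx^{-1}\Im\Ta_\o^\eps\|_{L^\infty}^{n}\|u^\eps\|_{H^{r-1}}^{n/N}\|v^\eps\|_{H^{r-1}}^{2-n/N}$, and a single application of Young's inequality with the conjugate pair $\bigl(\tfrac{2N}{2N-n},\tfrac{2N}{n}\bigr)$ yields exactly $\frac1{10n^2}\|v^\eps\|_{H^{r-1}}^2+a_n\|u^\eps\|_{H^{r-1}}^2\|\dx^{-1}\Im\Ta_\o^\eps\|_{L^\infty}^{2N}$, after which \eqref{antder1} finishes the proof.
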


\begin{proof}
For simplicity, we suppress the time dependence in this proof.
Let $n \in \{ 1,2, \dots, N \}$.
An interpolation argument with \eqref{conN} shows that
\[
\begin{aligned}
\big\| \jb{D}^{r-1-\frac {(\al-2)n}2} v^\eps \big\|_{L^2}
&\le
\big\| \jb{D}^{r-1-\frac {(\al-2)N}2} v^\eps \big\|_{L^2}^{\frac nN} \| \jb{D}^{r-1} v^\eps \|_{L^2}^{1-\frac nN}
\\
&\le \| v^\eps \|_{H^{r- \frac 32}}^{\frac nN} \| v^\eps \|_{H^{r-1}}^{1-\frac nN}
\\
&\le \| u^\eps \|_{H^{r-1}}^{\frac n{2N}} \| v^\eps \|_{H^{r-1}}^{1-\frac n{2N}}.
\end{aligned}
\]
By \eqref{eq:Ln}, Proposition \ref{prop:bili}, and Young's inequality,
there exists $a_n >0$ such that
\begin{equation}
\begin{aligned}
|L_{n,\eps}^r|
&\le
c_n \| \dx^{-1} \Im \Ta_\o^\eps \|_{L^\infty}^n
\big\| \jb{D}^{r-1-\frac {(\al-2)n}2} v^\eps \big\|_{L^2}^2
\\
&\le c_n
\| \dx^{-1} \Im \Ta_\o^\eps \|_{L^\infty}^n \| u^\eps \|_{H^{r-1}}^{\frac nN} 
\| v^\eps \|_{H^{r-1}}^{2(1-\frac n{2N})}
\\
&\le
\frac1{10 n^2} \| v^\eps\|_{H^{r-1}}^2 + a_n \| u^\eps\|_{H^{r-1}}^2 \| \dx^{-1} \Im \Ta_\o^\eps \|_{L^\infty}^{2N}.
\end{aligned}
\label{est:Lnes}
\end{equation}
It follows from \eqref{antder1} that
\[
\| \dx^{-1} \Im \Ta_\o^\eps \|_{L^\infty}
\les
\| \Ta_\o^\eps \|_{L^1}
\les
\| \Ta_\o^\eps \|_{L^2}.
\]
Hence,
we obtain the desired bound.
\end{proof}

We define the modified energy.

\begin{definition}
\label{def:modE}
Let $\al >2$, $r \ge 1$, and $\eps \in (0,1)$.
Set
\[
a := \sum_{n=1}^N a_n,
\]
where $a_n >0$ is the constant appeared in Lemma \ref{lem:ene1L}.
Then, we define 
\begin{equation}
\begin{aligned}
E_\eps^r (t)
&:= \bigg( \|u^\eps(t)\|_{H^{r-1}}^2 + \| v^\eps(t)\|_{H^{r-1}}^2
\\
&\qquad
+
\sum_{n=1}^N
L_{n,\eps}^r (t) + a \| u^\eps(t)\|_{H^{r-1}}^2 \| \Ta_\o^\eps(t) \|_{L^2}^{2N}
\bigg)^{\frac12}
\end{aligned}
\label{modene1}
\end{equation}
for $t \in [0,T_\eps]$,
where $L_{n,\eps}^r (t)$ is given in \eqref{eq:Ln}.
\end{definition}

Because of the presence of the lower order terms,
the modified energy $E^r_\eps (t)$ is coercive (without a smallness of $u^\eps$).
In fact,
since
\[
\sum_{n=1}^\infty \frac 1{10 n^2}
= \frac 1{10} \frac{\pi^2}6 < \frac 12,
\]
Lemma \ref{lem:ene1L} and Definition \ref{def:modE}
yield that
\begin{equation}
\begin{aligned}
&
\|u^\eps(t)\|_{H^{r-1}}^2 + \frac12  \| v^\eps(t)\|_{H^{r-1}}^2
\\
&
\le
E_\eps^r(t)^2
\\
&
\le
\|u^\eps(t)\|_{H^{r-1}}^2 + \frac 32  \| v^\eps(t)\|_{H^{r-1}}^2
+
2a
\| u^\eps(t)\|_{H^{r-1}}^2
\| \Ta_\o^\eps(t) \|_{L^2}^{2N}
\end{aligned}
\label{ene1}
\end{equation}
for $t \in [0,T_\eps]$.

\begin{remark}
\label{rem:gemod}
\rm
For $\al=2$,
the correction terms in \eqref{modene1} correspond to that in \cite{KoOk25b}.
Indeed,
with \eqref{eq:Ln} and \eqref{eq:Lncon1},
we formally take $\al=2$ and $N=\infty$ in \eqref{modene1} to obtain the following:
\begin{align*}
\| v^\eps(t)\|_{H^{r-1}}^2
+
\sum_{n=1}^\infty
L_{n,\eps}^s (t)
&=
\int_\T \sum_{n=0}^\infty \frac 1{n!} \big( \dx^{-1} \Im \Ta_\o^\eps (t) \big)^n
\big| \jb{D}^{r-1} v^\eps(t) \big|^2 dx
\\
&=
\int_\T
\exp \big( \dx^{-1} \Im \Ta_\o^\eps (t) \big)
\big| \jb{D}^{r-1} v^\eps(t) \big|^2 
dx
\\
&=
\Big\|
\exp \Big( -\frac i2 \dx^{-1} \Ta_\o^\eps (t) \Big)
\jb{D}^{r-1} v^\eps(t) \Big\|_{L^2}^2 
\\
&\sim
\Big\|
\exp \Big( -\frac i2 \dx^{-1} \Ta_\o^\eps (t) \Big)
v^\eps(t) \Big\|_{H^{r-1}}^2
.
\end{align*}
Here,
\[
\exp \Big( -\frac i2 \dx^{-1} \Ta_\o^\eps (t) \Big)
v^\eps(t)
\]
corresponds to the gauge transformation (essentially) used in \cite{KoOk25b}.
From this consideration,
gauge transformations can be regarded as a modified energy method that adds an infinite number of correction terms.
\end{remark}

We prove the following energy estimate.

\begin{proposition}
\label{prop:enineq}
Let $\al > 2$, $\eps \in (0,1)$, and let $s,r \in \R$ satisfy
\[
s > \regwp,
\quad
r \ge 1,
\]
where $\regwp$ is defined in \eqref{condA}.
Then, there exists $C_1(E_\eps^s(t))>0$ (which is independent of $\eps$) such that 
\[
\dt E_\eps^r(t) \le C_1(E_\eps^s(t)) (1 + E_\eps^r(t)) 
\]
for $t \in (0, T_\eps)$.
\end{proposition}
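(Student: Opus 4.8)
The plan is to reduce the claim to a differential inequality for the square $(E_\eps^r)^2$ and then carry out a term-by-term analysis of $\dt(E_\eps^r)^2$, isolating a single family of derivative-losing resonant contributions that is removed by the correction terms $L_{n,\eps}^r$. Since $u^\eps\in C^\infty((0,T_\eps]\times\T)$, the quantity $(E_\eps^r)^2$ is $C^1$ on $(0,T_\eps)$, and by the coercivity \eqref{ene1} we may assume $E_\eps^r(t)>0$ (the case $E_\eps^r\equiv0$ being trivial). Because $\dt E_\eps^r=\dt(E_\eps^r)^2/(2E_\eps^r)$, it suffices to establish
\[
\dt(E_\eps^r)^2\le C(E_\eps^s)\,E_\eps^r\,(1+E_\eps^r),
\]
after which dividing by $2E_\eps^r$ yields the stated bound with $C_1(E_\eps^s)=\tfrac12 C(E_\eps^s)$. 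Throughout I would use \eqref{ene1} with $r=s$ to replace every occurrence of $\|u^\eps\|_{H^s}$ (equivalently $\|u^\eps\|_{H^{s-1}}+\|v^\eps\|_{H^{s-1}}$) by a function of $E_\eps^s$, and Corollary \ref{cor:bili3} to bound the polynomial nonlinearities $\Ta^\eps,\Rl^\eps$ and their derivatives in $H^{r-1}$ by $C(E_\eps^s)(1+\|v^\eps\|_{H^{r-1}})$.

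Next I would differentiate and dispatch the benign terms. In $\dt(E_\eps^r)^2$ via \eqref{ueps}--\eqref{veps}, the dispersive pairings $\Re(\jb{D}^{r-1}u^\eps,\jb{D}^{r-1}(-iD^\al u^\eps))$ and $\Re(\jb{D}^{r-1}v^\eps,\jb{D}^{r-1}(-iD^\al v^\eps))$ vanish by self-adjointness of $D^\al$, while the parabolic terms $\eps\dx^2$ contribute non-positively and are discarded. The $u$-nonlinearity gives $\le C(E_\eps^s)\|u^\eps\|_{H^{r-1}}(1+\|v^\eps\|_{H^{r-1}})$, and in \eqref{veps} the term $\Rl^\eps$ is handled by Cauchy--Schwarz and Corollary \ref{cor:bili3}. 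The real-coefficient transport piece $(\Re\Ta_\o^\eps)\dx v^\eps$ and the conjugate term $\Ta_{\cj\o}^\eps\cj{\dx v^\eps}$ are controlled after commuting $\jb{D}^{r-1}$ through the coefficient (Proposition \ref{prop:commC}) and integrating by parts: the surviving pieces are $\|\dx\Re\Ta_\o^\eps\|_{L^\infty}\|v^\eps\|_{H^{r-1}}^2$ and $\|\dx\Ta_{\cj\o}^\eps\|_{L^\infty}\|v^\eps\|_{H^{r-1}}^2$, where $s>\tfrac52$ guarantees $\dx\Ta_\o^\eps,\dx\Ta_{\cj\o}^\eps\in L^\infty$. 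Finally, the coercivity correction $a\dt(\|u^\eps\|_{H^{r-1}}^2\|\Ta_\o^\eps\|_{L^2}^{2N})$ is expanded by the product rule: in $\dt\|\Ta_\o^\eps\|_{L^2}^{2N}$ one substitutes $\dt\Ta_\o^\eps$ and distributes the resulting $D^\al$ symmetrically as $(\Ta_\o^\eps\,\cdot,\,D^\al v^\eps)=(D^{\al/2}(\cdots),D^{\al/2}v^\eps)$; this is where $s>\tfrac\al2+1$ enters, as it makes $\|v^\eps\|_{H^{\al/2}}\sim\|u^\eps\|_{H^{\al/2+1}}\le C(E_\eps^s)$ finite. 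All these contributions are $\le C(E_\eps^s)E_\eps^r(1+E_\eps^r)$.

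The main obstacle is the genuinely derivative-losing term $i(P_{\neq0}\Im\Ta_\o^\eps)\dx v^\eps$ in \eqref{veps}: commuting $\jb{D}^{r-1}$ past the real coefficient and integrating by parts leaves a resonant piece $\mathcal P_0\sim\int(P_{\neq0}\Im\Ta_\o^\eps)\,\Im(\cj{\jb{D}^{r-1}v^\eps}\,\dx\jb{D}^{r-1}v^\eps)\,dx$ carrying a net of $2r-1$ derivatives, one too many to close by $\|v^\eps\|_{H^{r-1}}^2$. Here the correction terms enter. I would compute $\dt L_{n,\eps}^r$ and, in the pairing where $D^\al$ falls on the top-regularity factor $\jb{D}^{r_n}v^\eps$ (with $r_n=r-1-\tfrac{(\al-2)n}2$), apply Proposition \ref{prop:comm3} with $D^\al$ in place of $\jb{D}^s$: subtracting the first two terms of the fractional Leibniz expansion leaves an $L^2$-bounded remainder, while the retained first correction $\al(\dx b)D^{\al-2}\dx(\cdots)$ (with $b=\dx^{-1}\Im\Ta_\o^\eps$) reproduces, up to the tuned constant $c_n=2^n/(\al^n n!)$, exactly $-\mathcal P_{n-1}$ together with a new resonant piece $\mathcal P_n$ of the same shape but one level lower. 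Thus the resonant family telescopes,
\[
\mathcal P_0+\sum_{n=1}^N(\mathcal P_n-\mathcal P_{n-1})=\mathcal P_N,
\]
and by \eqref{conN} one has $\tfrac{(\al-2)N}2\ge\tfrac12$, so $r_N\le r-\tfrac32$ and the leftover $\mathcal P_N$ carries at most $2(r-1)$ derivatives and is bounded by $C(E_\eps^s)(E_\eps^r)^2$ after interpolation and Corollary \ref{cor:bili3}. The delicate points I expect are (i) verifying that the weight-derivative contributions $\int(\dx^{-1}\Im\dt\Ta_\o^\eps)(\cdots)$ and the cross terms of the telescoping match with the correct constants, and (ii) controlling every remainder left by Proposition \ref{prop:comm3} uniformly in $\eps$. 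Collecting all bounds gives $\dt(E_\eps^r)^2\le C(E_\eps^s)E_\eps^r(1+E_\eps^r)$, which completes the proof.
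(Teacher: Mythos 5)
Your plan follows the same architecture as the paper's proof: pass to $\dt\big((E_\eps^r)^2\big)$, cancel the resonant transport term against a telescoping family generated by the corrections $L_{n,\eps}^r$, control remainders with Proposition \ref{prop:comm3}, and handle $\|u^\eps\|_{H^{r-1}}^2\|\Ta_\o^\eps\|_{L^2}^{2N}$ by distributing $D^{\al/2}$ evenly. However, the crucial step fails as you state it. You apply Proposition \ref{prop:comm3} ``with $D^\al$ in place of $\jb{D}^s$,'' i.e.\ at order $\al$, to $f=(\dx^{-1}\Im\Ta_\o^\eps)^n$ and $g=\jb{D}^{r_n}v^\eps$ (your notation $r_n=r-1-\frac{(\al-2)n}2$). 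The resulting remainder bound reads
\begin{equation*}
\big\| \Com{\al}{f}(g) \big\|_{L^2}
\les
\| f \|_{H^{\max(\al,\frac52+\dl)}} \| v^\eps \|_{H^{r_n+\min(\al-2,\frac12+\dl)}}
+\| f \|_{H^{\frac52+\dl}} \| v^\eps \|_{H^{r_n+\al-2}},
\end{equation*}
and neither term is controlled under the stated hypotheses: $\|f\|_{H^{\al}}$ requires roughly $u^\eps\in H^{\al}$, which $E_\eps^s$ does not control when $\regwp<s<\al$ (such $s$ exist for every $\al>\frac52$); and for $n=1$ one has $\|v^\eps\|_{H^{r_1+\al-2}}=\|v^\eps\|_{H^{r-1+\frac{\al-2}2}}$, above the regularity carried by $E_\eps^r$, while the pairing partner $\jb{D}^{r_1}\cj{v^\eps}$ lies only in $H^{r-1-\frac{\al-2}2}$; since $\|v\|_{H^{r-1}}^2\le\|v\|_{H^{r-1+\sigma}}\|v\|_{H^{r-1-\sigma}}$ by log-convexity, such a product cannot be bounded by $\|v^\eps\|_{H^{r-1}}^2$. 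Moreover, your retained correction $\al(\dx f)D^{\al-2}\dx\jb{D}^{r_n}v^\eps$ carries the unbalanced derivative distribution $(r_n+\al-1,\,r_n)$, which is neither the balanced quantity $K_{n,\eps}^r$ of \eqref{eq:Kn}, whose distribution is $(r_{n-1}+1,\,r_{n-1})$, nor the term produced at the previous level; so the claimed exact telescoping does not hold, and matching the shapes costs another commutator of the same strength. The paper's proof hinges on avoiding precisely this: by self-adjointness it first moves $\jb{D}^{\frac\al2-1}$ onto the conjugate factor and only then applies Proposition \ref{prop:comm3} at order $\frac\al2+1$, so the coefficient is needed only in $H^{\frac\al2+1}$ (this is where $s>\frac\al2+1$ truly enters), both factors sit at $r_{n-1}$ and $r_{n-1}+1$, the dispersive part yields exactly $-K_{n,\eps}^r$ up to harmless terms (see \eqref{remi4}), and the transport part yields exactly $K_{n+1,\eps}^r$ (see \eqref{J3}). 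Note also that the new resonant piece $K_{n+1,\eps}^r$ comes from the transport term $i(P_{\neq0}\Im\Ta_\o^\eps)\dx v^\eps$ acting in $\dt L_{n,\eps}^r$, not from the Leibniz correction of the dispersive term as you assert.

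A second gap: the parabolic contributions cannot simply be ``discarded.'' Inside $\dt L_{n,\eps}^r$ the weight $(\dx^{-1}\Im\Ta_\o^\eps)^n$ has no sign, so the term $-2\eps c_n\int_\T(\dx^{-1}\Im\Ta_\o^\eps)^n\big|\jb{D}^{r_n}\dx v^\eps\big|^2dx$ is not non-positive. The paper bounds it, using \eqref{conN}, interpolation, and Young's inequality, by $\frac{\eps}{2N}\|\dx v^\eps\|_{H^{r-1}}^2+C(E_\eps^s)\|v^\eps\|_{H^{r-1}}^2$, and absorbs the first piece into the genuinely negative term $-2\eps\|\dx v^\eps\|_{H^{r-1}}^2$ coming from the main energy; this bookkeeping, absent from your argument, is what keeps the final constant independent of $\eps$.
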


To prove Proposition \ref{prop:enineq}, we prepare two lemmas.
First, we give a simple estimate.

\begin{lemma}
\label{lem:noli}
Let $\al>2$,
$s> \frac \al 2+1$,
and $\eps \in (0,1)$.
Then,
we have
\[
\| \dt \Ta^\eps (t)\|_{H^{s-\al-1}}
\le C(\|u^\eps (t)\|_{H^s}).
\]
for $t \in (0,T_\eps)$.
\end{lemma}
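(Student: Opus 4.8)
The plan is to differentiate $\Ta^\eps = F(u^\eps, v^\eps, \cj{u^\eps}, \cj{v^\eps})$ in time using the chain rule, replace each time derivative by means of the regularized equation \eqref{ueps}, and then estimate the resulting products with the bilinear estimate of Proposition \ref{prop:bili}. Since $F$ is a polynomial, writing $v^\eps = \dx u^\eps$ and applying the chain rule yields
\[
\dt \Ta^\eps = \Ta_\zeta^\eps \, \dt u^\eps + \Ta_\o^\eps \, \dt v^\eps + \Ta_{\cj \zeta}^\eps \, \cj{\dt u^\eps} + \Ta_{\cj \o}^\eps \, \cj{\dt v^\eps}.
\]
From \eqref{ueps} we have $\dt u^\eps = -i D^\al u^\eps + \eps \dx^2 u^\eps + \Ta^\eps$, and differentiating this identity in $x$ gives $\dt v^\eps = -i D^\al v^\eps + \eps \dx^2 v^\eps + \dx \Ta^\eps$. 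Substituting these expressions, the term demanding the most derivatives is $\Ta_\o^\eps \cdot (-i D^\al v^\eps)$, in which $D^\al v^\eps = D^\al \dx u^\eps$ loses $\al+1$ derivatives relative to $u^\eps$ and hence lies only in $H^{s-\al-1}$. Every other contribution is of strictly lower order.

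The main step is to estimate this worst product in $H^{s-\al-1}$. Because the coefficients $\Ta_\zeta^\eps, \Ta_\o^\eps, \Ta_{\cj \zeta}^\eps, \Ta_{\cj \o}^\eps$ are themselves polynomials in $(u^\eps, v^\eps, \cj{u^\eps}, \cj{v^\eps})$, Corollary \ref{cor:bili3} (applied to these polynomial nonlinearities with $r=s$, together with $\|v^\eps\|_{H^{s-1}} \le \|u^\eps\|_{H^s}$) bounds $\|\Ta_\o^\eps\|_{H^{s-1}} \le C(\|u^\eps\|_{H^s})$. I would then apply Proposition \ref{prop:bili} with $(s_0, s_1, s_2) = (\al+1-s,\, s-1,\, s-\al-1)$, so that
\[
\| \Ta_\o^\eps \, D^\al v^\eps \|_{H^{s-\al-1}} \les \|\Ta_\o^\eps\|_{H^{s-1}} \|D^\al v^\eps\|_{H^{s-\al-1}} \le C(\|u^\eps\|_{H^s}).
\]
To see that the hypotheses hold, note $s_0 + s_1 = \al > 0$, $\ s_1 + s_2 = 2s - \al - 2 > 0$ by $s > \frac\al2 + 1$, $\ s_2 + s_0 = 0$, and $s_0 + s_1 + s_2 = s-1 > \frac 12$ since $s > 2$; thus $\min(s_0+s_1, s_1+s_2, s_2+s_0) = 0 \ge 0$ and $s_0+s_1+s_2 > \frac12$, which is exactly the first branch of Proposition \ref{prop:bili}.

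For the remaining terms I would argue they are all controlled uniformly in $\eps$. The parabolic pieces $\eps \dx^2 u^\eps$ and $\eps \dx^2 v^\eps = \eps \dx^3 u^\eps$ cost only $2$ and $3$ derivatives respectively, both strictly less than $\al+1$ since $\al > 2$; carried by $\eps \in (0,1)$ they land in spaces embedded in $H^{s-\al-1}$ with $\eps$-independent bounds. The terms involving $\Ta^\eps$ and $\dx \Ta^\eps$, as well as $\Ta_\zeta^\eps \dt u^\eps$ and its conjugate (whose worst factor $D^\al u^\eps$ lies in $H^{s-\al} \subset H^{s-\al-1}$), are of strictly lower order and are handled by Corollary \ref{cor:bili3} and Proposition \ref{prop:bili} in the same fashion. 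The key obstacle, and the source of the threshold $s > \frac\al2 + 1$, is precisely the worst product $\Ta_\o^\eps D^\al v^\eps$: the bilinear estimate requires $s_1 + s_2 = 2s - \al - 2 \ge 0$, which degenerates exactly at $s = \frac\al2 + 1$, while the endpoint $s_2 + s_0 = 0$ forces the use of the first branch of Proposition \ref{prop:bili} and hence the verification $s - 1 > \frac12$. The $\eps$-uniformity is ensured because the order-$2$ parabolic term never competes with the order-$(\al+1)$ dispersive contribution.
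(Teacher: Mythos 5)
Your proof is correct and follows essentially the same route as the paper: both apply the chain rule to $\Ta^\eps$, substitute the regularized equation for $\dt v^\eps$, and control the worst product $\Ta_\o^\eps\,(\text{order-}\al\text{ term})$ via Proposition \ref{prop:bili} with exponents $(s_0,s_1,s_2)=(\al+1-s,\,s-1,\,s-\al-1)$, which is precisely where the threshold $s>\frac\al2+1$ enters. The only difference is organizational — the paper first bounds $\|\dt v^\eps\|_{H^{s-\al-1}}\le C(\|u^\eps\|_{H^s})$ as an intermediate step and then multiplies by $\Ta_\o^\eps$, whereas you distribute the substitution first — but the estimates invoked are identical.
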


\begin{proof}
Proposition \ref{prop:bili} with $s>\frac 32$ and $\al>1$
yields
\[
\| \Ta^\eps (t) \dx v^\eps (t) \|_{H^{s-\al-1}}
\les
\| \Ta^\eps (t) \|_{H^{s-1}} \| \dx v^\eps (t) \|_{H^{s-2}}
\le
C(\|u^\eps (t) \|_{H^s}).
\]
Hence,
by \eqref{veps} and $\al>2$,
we have
\begin{equation}
\begin{aligned}
\|\dt v^\eps (t) \|_{H^{s-\al-1}} 
&\le
\| v^\eps (t) \|_{H^{s-1}}
+ \eps \| v^\eps (t) \|_{H^{s-\al+1}}
\\
&\quad
+ \| (P_{\neq 0} \Im \Ta^\eps_\o) (t) \dx v^\eps (t) \|_{H^{s-\al-1}}
\\
&\quad
+ \| (\Re \Ta^\eps_\o) (t) \dx v^\eps (t) \|_{H^{s-\al-1}}
\\
&\quad
+ \| \Ta^\eps_{\cj \o} (t) \cj{\dx v^\eps (t)} \|_{H^{s-\al-1}}
\\
&\quad
+ \| \Rl^{\eps} (t) \|_{H^{s-\al-1}}
\\
&\le
C(\|u^\eps (t) \|_{H^s}).
\end{aligned}
\label{dtuv2}
\end{equation}
A direct calculation yields that
\begin{align*}
\dt \Ta^\eps(t)
&=
\Ta_\o^\eps(t) \dt v^\eps(t)
+ \Ta_{\cj \o}^\eps(t) \dt \cj{v^\eps(t)}
+ \Ta_{\zeta}^\eps(t) \dt u^\eps(t) + \Ta_{\cj \zeta}^\eps(t) \dt \cj{u^\eps(t)}.
\end{align*}
Proposition \ref{prop:bili} with \eqref{dtuv2}, $\al>2$, and $s> \frac \al 2+1$
yields that
\[
\| \Ta_\o^\eps(t) \dt v^\eps(t) \|_{H^{s-\al-1}}
\les
\| \Ta_\o^\eps(t) \|_{H^{s-1}} \| \dt v^\eps(t) \|_{H^{s-\al-1}}
\le
C(\|u^\eps (t) \|_{H^s}).
\]
Since the remaining parts are similarly handled,
we obtain the desired bound.
\end{proof}

The proof of the following lemma is somewhat lengthy,
so we will first state the claim and provide the proof in the next subsection.

\begin{lemma}
\label{lem:exdtwLn}
Let $\al > 2$ and $\eps \in (0,1)$.
Suppose that
\[
s >\regwp,
\quad
r \ge 1,
\]
where $\regwp$ is defined in \eqref{condA}.
Then, there exists $C'(E_\eps^s(t)) > 0$ such that
\[
\dt  \Big( \| v^\eps(t) \|_{H^{r-1}}^2 + \sum_{n=1}^N L_{n,\eps}^r(t) \Big)
\le
C'(E_\eps^s(t))(1+E_\eps^r(t)) \|v^\eps (t)\|_{H^{r-1}}
\]
for any $t \in (0, T_\eps)$.
\end{lemma}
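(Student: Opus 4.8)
The plan is to differentiate $\|v^\eps\|_{H^{r-1}}^2+\sum_{n=1}^N L_{n,\eps}^r$ in time, substitute the evolution equation \eqref{veps} wherever $\dt v^\eps$ appears, and exhibit a telescoping cancellation among the correction terms that removes every contribution losing a derivative. First I would compute $\dt\|v^\eps\|_{H^{r-1}}^2=2\Re(\jb{D}^{r-1}\dt v^\eps,\jb{D}^{r-1}v^\eps)_{L^2}$ and insert \eqref{veps}. Since $D^\al$ is self-adjoint and commutes with $\jb{D}^{r-1}$, the dispersive term contributes $2\Re\big[-i(D^\al\jb{D}^{r-1}v^\eps,\jb{D}^{r-1}v^\eps)_{L^2}\big]=0$ because the inner product is real, and the dissipative term $\eps\dx^2 v^\eps$ contributes $-2\eps\|\dx\jb{D}^{r-1}v^\eps\|_{L^2}^2\le 0$, so both may be discarded. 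The terms $(\Re\Ta_\o^\eps)\dx v^\eps$, $\Ta_{\cj\o}^\eps\cj{\dx v^\eps}$, and $\Rl^\eps$ are harmless: for their leading part I commute $\jb{D}^{r-1}$ onto $\dx v^\eps$ and integrate by parts, which moves the derivative onto the (real, respectively conjugate) coefficient, while the resulting commutator is bounded by Proposition \ref{prop:commC}; since $s>\regwp$ keeps the coefficients $\Ta_\o^\eps,\Ta_{\cj\o}^\eps$ of regularity above $\tfrac32$, Propositions \ref{prop:bili}, \ref{prop:bili2} and Corollary \ref{cor:bili3} bound these by $C(E_\eps^s(t))(1+E_\eps^r(t))\|v^\eps\|_{H^{r-1}}$. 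What remains is the single dangerous term $2\Re\big[i\int_\T\jb{D}^{r-1}\big((P_{\neq 0}\Im\Ta_\o^\eps)\dx v^\eps\big)\,\cj{\jb{D}^{r-1}v^\eps}\,dx\big]$, whose principal part loses one full derivative and cannot be absorbed; here the purely imaginary coefficient is exactly what obstructs integration by parts.

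The heart of the argument is to cancel this dangerous term against $\sum_{n=1}^N\dt L_{n,\eps}^r$. Differentiating \eqref{eq:Ln}, I would split $\dt L_{n,\eps}^r$ into the piece where $\dt$ falls on the weight $(\dx^{-1}\Im\Ta_\o^\eps)^n$ and the piece where it falls on $|\jb{D}^{r-1-(\al-2)n/2}v^\eps|^2$. In the first piece $\dt\Ta_\o^\eps$ is controlled by Lemma \ref{lem:noli}, and since $\dx^{-1}$ and the mean-zero projection keep the weight bounded in $L^\infty$ through \eqref{antder1}, it is estimated by $C(E_\eps^s)(1+E_\eps^r)\|v^\eps\|_{H^{r-1}}$. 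In the second piece I substitute \eqref{veps}; now the dispersive term $-iD^\al v^\eps$ no longer integrates to zero, because the weight breaks the symmetry. Applying the fractional Leibniz rule \eqref{fLeib1}, equivalently the refined commutator of Proposition \ref{prop:comm3}, the leading commutator $-\al(\dx(\text{weight}))D^{\al-2}\dx(\cdots)$ recovers a derivative of order $\al-1$; using $\dx\dx^{-1}\Im\Ta_\o^\eps=P_{\neq 0}\Im\Ta_\o^\eps$ and the regularity shift $\tfrac{\al-2}2$ built into \eqref{eq:Ln}, this produces at level $n$ exactly a term matching the dangerous term passed down from level $n-1$. The constants $c_n=2^n/(\al^n n!)$ in \eqref{eq:Lncon1} are chosen so that the level-$n$ principal term cancels the level-$(n-1)$ leftover; this telescoping is isolated as the cancellation lemma (Lemma \ref{lem:modi}).

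Summing over $n=1,\dots,N$, the dangerous term from the bare energy is annihilated by $\dt L_{1,\eps}^r$, each intermediate leftover is annihilated one level higher, and only the leftover generated at level $N$ survives. Here the choice \eqref{conN}, namely $(\al-2)N\ge 1$, is decisive: it forces the weight index to satisfy $r-1-\tfrac{(\al-2)N}2\le r-\tfrac32$, so the surviving term carries a derivative loss of at most $\tfrac12$, which is absorbed by the interpolation inequality already used in Lemma \ref{lem:ene1L} and bounded by $C(E_\eps^s)\|v^\eps\|_{H^{r-1}}^2\le C(E_\eps^s)E_\eps^r\|v^\eps\|_{H^{r-1}}$ via the coercivity \eqref{ene1}. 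The commutator remainders left by Proposition \ref{prop:comm3}, together with the lower-order pieces of the expansion \eqref{fLeib1}, are all of strictly lower order and are estimated by the bilinear and commutator estimates, each time extracting one factor $\|v^\eps\|_{H^{r-1}}$ and coefficients controlled by the low norm $E_\eps^s$. Collecting the bounds yields the claimed inequality.

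I expect the main obstacle to be the precise bookkeeping of the telescoping cancellation: one must track the exact principal symbol produced by \eqref{fLeib1} at each level, confirm that the regularity shift $\tfrac{(\al-2)n}2$ together with the constants $c_n$ aligns the level-$n$ term with the level-$(n-1)$ leftover, and verify that Proposition \ref{prop:comm3} can be invoked without generating a $\|g\|_{H^{1/2+\eps}}$ factor (cf.\ the Remark following Proposition \ref{prop:comm3}), since such a factor would force the spurious restriction $\al\ge 3$. Ensuring that every constant is independent of $\eps$ and that all estimates hold at the threshold regularity $s>\regwp$ is the other delicate point.
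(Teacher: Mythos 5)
Your strategy is the same as the paper's: the telescoping scheme built from the correction terms $K_{n,\eps}^r$ of \eqref{eq:Kn}, with the level-$n$ cancellation isolated as Lemma \ref{lem:modi} (the dispersive term in $\dt L_{n,\eps}^r$ cancels, via Proposition \ref{prop:comm3}, the term passed down from level $n-1$, while the transport term generates the level-$(n+1)$ term through $(\dx^{-1}\Im\Ta_\o^\eps)^n P_{\neq 0}\Im\Ta_\o^\eps = \tfrac1{n+1}\dx\big((\dx^{-1}\Im\Ta_\o^\eps)^{n+1}\big)$), the bare-energy dangerous term identified as $\tfrac12 K_{1,\eps}^r$, and the surviving $K_{N+1,\eps}^r$ rendered harmless by \eqref{conN}. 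However, there is one genuine gap: you discard the dissipation $-2\eps\|\dx \jb{D}^{r-1}v^\eps\|_{L^2}^2$ at the outset, and you never account for the contribution of the term $\eps\dx^2 v^\eps$ in \eqref{veps} once it is substituted inside $\dt L_{n,\eps}^r$.

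That contribution is not negligible. Writing $2\Re(f\,\cj{\dx^2 f})=\dx^2(|f|^2)-2|\dx f|^2$, it contains
\begin{equation*}
-2\eps c_n\int_\T \big(\dx^{-1}\Im\Ta_\o^\eps\big)^n\,\big|\jb{D}^{r-1-\frac{(\al-2)n}2}\dx v^\eps\big|^2\,dx ,
\end{equation*}
which is \emph{not} sign-definite, because the weight $(\dx^{-1}\Im\Ta_\o^\eps)^n$ has no sign; so it cannot be discarded alongside the genuinely negative bare-energy dissipation. Nor can it be bounded by the claimed right-hand side: when $(\al-2)n<2$ it involves $\|\dx v^\eps\|_{H^{r-1-(\al-2)n/2}}$, i.e.\ strictly more than $r-1$ derivatives of $v^\eps$, which is finite for $t>0$ only by parabolic smoothing and is not uniformly controlled by $E_\eps^r$. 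The only way to close the estimate is the paper's: interpolate and apply Young's inequality to split this term into $\frac{\eps}{2N}\|\dx v^\eps\|_{H^{r-1}}^2 + C(\|u^\eps\|_{H^s})\|v^\eps\|_{H^{r-1}}^2$ (this is \eqref{Ir2}, and it is precisely why Lemma \ref{lem:modi} carries the extra term $\frac{\eps}{2N}\|\dx v^\eps(t)\|_{H^{r-1}}^2$ on its right-hand side), and then absorb the sum of these $N$ contributions into the \emph{retained} dissipation $-2\eps\|\dx v^\eps\|_{H^{r-1}}^2$ coming from the bare energy. So you must keep the dissipative term you threw away; with that correction (and the corresponding bookkeeping of the $\eps$-terms through the telescoping sum) your argument coincides with the paper's proof.
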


By using Lemmas \ref{lem:noli} and \ref{lem:exdtwLn},
we prove Proposition \ref{prop:enineq}.

\begin{proof}[Proof of Proposition \ref{prop:enineq}]
For simplicity, we suppress the time dependence in this proof. 
From \eqref{ueps} and Corollary \ref{cor:bili3},
we have
\begin{equation}
\begin{aligned}
\frac12 \dt \big( \|u^\eps\|_{H^{r-1}}^2 \big)
&=
\Re(\dt u^\eps, u^\eps)_{H^{r-1}}
\\
&=
-\eps \|\dx u^\eps\|_{H^{r-1}}^2 + \Re( \Ta^\eps, u^\eps)_{H^{r-1}}
\\
&\le \| \Ta^\eps \|_{H^{r-1}} \|u^\eps\|_{H^{r-1}}
\\
&\le C( \| u^\eps \|_{H^s})(1 + \| u^\eps \|_{H^r}) \|u^\eps\|_{H^{r-1}}
\end{aligned}
\label{exdtu}
\end{equation}
for $s>\frac 32$ and $r \ge 1$.
Moreover,
Lemma \ref{lem:noli} and Corollary \ref{cor:bili3} yield that
\begin{align*}
\frac 12 \dt  \big( \| \Ta_\o^\eps \|_{L^2}^2 \big)
=
\Re ( \dt \Ta_\o^\eps, \Ta_\o^\eps)_{L^2}
\le
\| \dt \Ta_\o^\eps \|_{H^{-\frac \al 2}}
\| \Ta_\o^\eps \|_{H^{\frac \al2}}
\le
C( \| u^\eps \|_{H^s})
\end{align*}
for $s> \regwp$.
Hence,
we obtain that
\begin{equation}
\begin{aligned}
\dt  \big(\|  u^\eps\|_{H^{r-1}}^2 \| \Ta_\o^\eps \|_{L^2}^{2N} \big)
&=
\dt (\| u^\eps\|_{H^{r-1}}^2)
\| \Ta_\o^\eps \|_{L^2}^{2N}
\\
&\quad
+
\| u^\eps\|_{H^{r-1}}^2 
N
\| \Ta_\o^\eps \|_{L^2}^{2(N-1)}
\dt ( \| \Ta_\o^\eps \|_{L^2}^2)
\\
&\le
C( \| u^\eps \|_{H^s})(1 + \| v^\eps \|_{H^{r-1}}) \|v^\eps\|_{H^{r-1}}
\end{aligned}
\label{exdtmod}
\end{equation}
for $s> \regwp$ and $r \ge 1$.

From \eqref{modene1}, \eqref{exdtu}, \eqref{exdtmod}, \eqref{ene1}, and Lemma  \ref{lem:exdtwLn},
we get
\[
\dt \big( (E_\eps^r)^2 \big) \le C(E_\eps^s)(1 + E_\eps^r) E_\eps^r,
\] 
which leads the desired bound.
\end{proof}

We prove the existence of a solution to \eqref{fNLS}.
Let $s>\regwp$, $r=s$, and $\phi \in H^s(\T)$.
It follows from \eqref{modene1} that
\[
K
:= E_s^\eps(0)
\]
is independent of $\eps>0$.
Define
\begin{equation}
T := \frac 1{C_1(2K)+1} \log \frac{1+2K}{1+K},
\label{time1}
\end{equation}
where $C_1$ is the continuous function appeared in Proposition \ref{prop:enineq}.
Note that $T$ is independent of $\eps>0$.

Set
\[
T_{\eps}^{\ast}
:=
\sup \{ t >0 \mid E_s^\eps(\tau)\le 2K \text{ for } \tau \in [0, t ]\}.
\]
Then, we have
\[
T_{\eps}^{\ast}\ge T.
\]
Indeed,
if $T_{\eps}^{\ast}<T$,
there exists $t_0 \in (0, T_{\eps}^{\ast})$ such that $E^\eps_s(t_0) = 2K$.
It follows from Proposition \ref{prop:enineq} with $r=s$ and \eqref{time1} that
\[
E_\eps^s(t_0)
\le
( 1 + E_\eps^s(0)) e^{C_1(E^s_\eps(0)) t_0} -1
\le
( 1 + K) e^{C_1(2K) T} -1
<2K.
\]
This contradicts to $E_\eps^s (t_0)=2K$.

From Proposition \ref{prop:enineq},
we also have the following.

\begin{corollary}
\label{cor:enebdur}
In addition to the assumption of Proposition \ref{prop:enineq},
we further assume that $r \ge s$ and $\phi \in H^r(\T)$.
Then,
there exists $C_2 (\| \phi \|_{H^s})>0$ such that
\[
\| u^\eps \|_{L_T^\infty H^r}
\le C_2 (\| \phi \|_{H^s}) (1+\| \phi \|_{H^r}),
\]
for $\eps \in (0,1)$,
where $T$ is defined in \eqref{time1}
.
\end{corollary}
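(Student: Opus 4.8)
The plan is to upgrade the \emph{a priori} bound already obtained at regularity $s$ to regularity $r\ge s$ by freezing the low-regularity energy in Proposition~\ref{prop:enineq} and applying Gronwall's inequality. The crucial structural feature we exploit is that the right-hand side of Proposition~\ref{prop:enineq}, namely $C_1(E_\eps^s(t))(1+E_\eps^r(t))$, is \emph{linear} in the high-regularity energy $E_\eps^r$, with a coefficient depending only on the low-regularity energy $E_\eps^s$. Since $\phi\in H^r(\T)$ with $r\ge s$ implies $\phi\in H^s(\T)$, the discussion preceding the corollary applies verbatim: with $K=E_s^\eps(0)$, which depends only on $\|\phi\|_{H^s}$ and is independent of $\eps$, one has $E_\eps^s(t)\le 2K$ for all $t\in[0,T]$, where $T$ is given by \eqref{time1}. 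Together with the persistence of regularity noted in the footnote to Proposition~\ref{prop:exRfNLS} and a standard continuation argument based on this uniform $H^s$ control, the solution $u^\eps$ remains in $C([0,T];H^r(\T))$, so $E_\eps^r(t)$ is well defined and differentiable on $(0,T)$.

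First I would freeze the coefficient. Since $C_1$ is continuous and $E_\eps^s(t)\le 2K$ on $[0,T]$, we have $\sup_{t\in[0,T]}C_1(E_\eps^s(t))\le \widetilde C(K):=\sup_{0\le y\le 2K}C_1(y)<\infty$, a quantity depending only on $\|\phi\|_{H^s}$. Proposition~\ref{prop:enineq} then gives the differential inequality $\dt E_\eps^r(t)\le \widetilde C(K)(1+E_\eps^r(t))$ on $(0,T)$, and Gronwall's inequality yields
\[
1+E_\eps^r(t)\le \big(1+E_\eps^r(0)\big)\,e^{\widetilde C(K)T}, \qquad t\in[0,T].
\]
Next I would estimate the initial energy. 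Using the upper bound in \eqref{ene1} at $t=0$, together with $u^\eps(0)=\phi$ and $v^\eps(0)=\dx\phi$ (so that $\|u^\eps(0)\|_{H^{r-1}},\|v^\eps(0)\|_{H^{r-1}}\le\|\phi\|_{H^r}$), and the bound $\|\Ta_\o^\eps(0)\|_{L^2}=\|F_\o(\phi,\dx\phi,\cj\phi,\cj{\dx\phi})\|_{L^2}\le C(\|\phi\|_{H^s})$ coming from Corollary~\ref{cor:bili3} (recall $s>\tfrac32$), I obtain $1+E_\eps^r(0)\le C(\|\phi\|_{H^s})(1+\|\phi\|_{H^r})$. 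Since $T$ and $\widetilde C(K)$ depend only on $\|\phi\|_{H^s}$, combining the last two displays gives $\sup_{t\in[0,T]}E_\eps^r(t)\le C(\|\phi\|_{H^s})(1+\|\phi\|_{H^r})$, uniformly in $\eps\in(0,1)$.

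Finally I would convert back to the Sobolev norm. The lower bound in \eqref{ene1} gives $\|u^\eps(t)\|_{H^{r-1}}^2+\tfrac12\|v^\eps(t)\|_{H^{r-1}}^2\le E_\eps^r(t)^2$, and since $v^\eps=\dx u^\eps$ one has the identity $\|u^\eps\|_{H^r}^2=\|u^\eps\|_{H^{r-1}}^2+\|v^\eps\|_{H^{r-1}}^2$; hence $\|u^\eps(t)\|_{H^r}\les E_\eps^r(t)$, and taking the supremum over $t\in[0,T]$ completes the estimate with a constant $C_2(\|\phi\|_{H^s})$. The proof is essentially bookkeeping once Proposition~\ref{prop:enineq} is in hand; the only point requiring care---and the true content of the statement---is that the dependence on the high norm $\|\phi\|_{H^r}$ comes out \emph{linear}. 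This is forced precisely by the tame structure of Proposition~\ref{prop:enineq} (the factor $1+E_\eps^r$ rather than a power of $E_\eps^r$) and by the fact that the existence time $T$ and the Gronwall coefficient $\widetilde C(K)$ are controlled solely by the low norm $\|\phi\|_{H^s}$; this is what yields the claimed form $C_2(\|\phi\|_{H^s})(1+\|\phi\|_{H^r})$ rather than a bound growing nonlinearly in $\|\phi\|_{H^r}$.
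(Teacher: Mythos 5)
Your proof is correct and follows exactly the argument the paper intends: it reuses the a priori bound $E_\eps^s(t)\le 2K$ on $[0,T]$ established just before the corollary, freezes the coefficient $C_1(E_\eps^s(t))$, applies Gronwall to the tame inequality of Proposition \ref{prop:enineq}, and converts energy to norm via \eqref{ene1} and $\jb{k}^2=1+|k|^2$. The only point the paper leaves implicit that you rightly flag---persistence of regularity and continuation so that $E_\eps^r$ is defined on all of $[0,T]$---is handled exactly as in the footnote to Proposition \ref{prop:exRfNLS}, so there is no gap.
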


By Corollary \ref{cor:enebdur} with $r=s$,
there exists $u \in L^{\infty}([0,T];H^s(\T))$
and a sequence $\{ \eps_j \}_{j \in \N} \subset (0,1)$ such that
\[
\lim_{j \to \infty} \eps_j =0
\]
and $\{ u^{\eps_j} \}_{j \in \N}$ converges weak$\ast$ to $u$ in $L^\infty ([0,T]; H^s(\T))$.
It follows that $u$ is a solution to \eqref{fNLS} in the sense of Definition \ref{def:sol}.
Furthermore,
if $\phi \in H^r(\T)$ with $r \ge s$,
then there exists a further subsequence $\{ \eps_j' \}_{j \in \N} \subset \{ \eps_j \}_{j \in \N}$ such that$\{ u^{\eps_j'} \}_{j \in \N}$ converges weak$\ast$ to $u$ in $L^\infty ([0,T]; H^r(\T))$.
In particular,
it holds that $u \in L^\infty([0,T]; H^r(\T))$.

\subsection{Proof of Lemma \ref{lem:exdtwLn}}

In this subsection,
we prove Lemma \ref{lem:exdtwLn}.
First, we define
\begin{equation}
\begin{aligned}
K_{n,\eps}^r(t)
&:=
- \al c_n \Im \int_\T
\dx \big( \big( \dx^{-1} \Im \Ta_\o^\eps (t) \big)^n \big)
\\
&\quad
\times \big( \jb{D}^{r-1-\frac{(\al-2)(n-1)}2} \dx v^\eps(t) \big) 
\jb{D}^{r-1-\frac {(\al-2)(n-1)}2} \cj{v^\eps(t)} dx
\end{aligned}
\label{eq:Kn}
\end{equation}
for $r \ge 1$, $n \in \N$,
where $c_n$ is defined in \eqref{eq:Lncon1}.
We show the following lemma.

\begin{lemma}
\label{lem:modi}
With the same assumption as in Lemma \ref{lem:exdtwLn},
we have
\[
\begin{aligned}
&K_{n,\eps}^r(t) + \dt L_{n,\eps}^r(t) - K_{n+1,\eps}^r(t)
\\
&\le
C(E_\eps^s(t))(1 + E_\eps^r(t))
\|v^\eps(t)\|_{H^{r-1}}
+ \frac \eps{2N} \| \dx v^\eps(t) \|_{H^{r-1}}^2
\end{aligned}
\]
for any $n = 1, \cdots, N$ and $t \in (0,T_\eps)$,
where $N$ is defined in \eqref{conN}.
\end{lemma}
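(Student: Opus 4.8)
The plan is to differentiate $L_{n,\eps}^r$ in time, substitute the evolution equation \eqref{veps} for $\dt v^\eps$, and show that the only two genuinely dangerous contributions reproduce exactly $-K_{n,\eps}^r$ and $+K_{n+1,\eps}^r$, so that all that survives in $K_{n,\eps}^r+\dt L_{n,\eps}^r-K_{n+1,\eps}^r$ is of the asserted good type. Write $\Phi:=\dx^{-1}\Im\Ta_\o^\eps$ (so $\dx\Phi=P_{\neq0}\Im\Ta_\o^\eps$ is real-valued), $\sigma_n:=r-1-\frac{(\al-2)n}2$, $\tau_n:=r-1-\frac{(\al-2)(n-1)}2$, and $\psi:=\jb{D}^{\sigma_n}v^\eps$. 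The product rule gives $\dt L_{n,\eps}^r=c_n n\int_\T\Phi^{n-1}(\dt\Phi)|\psi|^2\,dx+2c_n\Re\int_\T\Phi^n(\jb{D}^{\sigma_n}\dt v^\eps)\overline\psi\,dx$, and into the second integral I substitute $\dt v^\eps=-iD^\al v^\eps+\eps\dx^2v^\eps+i(\dx\Phi)\dx v^\eps+(\Re\Ta_\o^\eps)\dx v^\eps+\Ta_{\cj\o}^\eps\cj{\dx v^\eps}+\Rl^\eps$, treating the resulting pieces one by one.

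The term $+K_{n+1,\eps}^r$ arises from the problematic transport term $i(\dx\Phi)\dx v^\eps$. Commuting $\jb{D}^{\sigma_n}$ past the coefficient $\dx\Phi$ (the error controlled by Proposition \ref{prop:commC} when $\sigma_n\ge0$ and by Proposition \ref{prop:commne} when $\sigma_n<0$), the leading part is $2c_n\Re\int_\T i\Phi^n(\dx\Phi)(\jb{D}^{\sigma_n}\dx v^\eps)\overline\psi\,dx=-2c_n\Im\int_\T\Phi^n(\dx\Phi)(\jb{D}^{\sigma_n}\dx v^\eps)\,\cj{\jb{D}^{\sigma_n}v^\eps}\,dx$. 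Since $\dx(\Phi^{n+1})=(n+1)\Phi^n\dx\Phi$, the constant identity $\al c_{n+1}(n+1)=2c_n$ (immediate from \eqref{eq:Lncon1}), together with $\tau_{n+1}=\sigma_n$, makes this precisely $K_{n+1,\eps}^r$.

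Extracting $-K_{n,\eps}^r$ from the dispersive term is the heart of the argument and the main obstacle. Since $\jb{D}^{\sigma_n}$ and $D^\al$ commute, this contribution equals $2c_n\Re\int_\T\Phi^n(-iD^\al\psi)\overline\psi\,dx=2c_n\Im\int_\T\Phi^n(D^\al\psi)\overline\psi\,dx$; using that $\Phi^n$ is real and $D^\al$ self-adjoint, it becomes $ic_n\int_\T\big(D^\al(\Phi^n\psi)-\Phi^nD^\al\psi\big)\overline\psi\,dx$, a commutator paired against $\overline\psi$. I expand this commutator by the refined fractional Leibniz rule \eqref{fLeib1}: its most dangerous term is $-\al(\dx\Phi^n)D^{\al-2}\dx\psi$, and the remainder is the $D^\al$-analogue of $\Com{\al}{\Phi^n}(\psi)$, estimated by Proposition \ref{prop:comm3} (the difference $\jb{D}^\al-D^\al$ being a lower-order multiplier). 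The crucial bookkeeping is that although $D^{\al-2}\dx\psi$ carries $\al-1$ derivatives on $v^\eps$ beyond $\sigma_n$, the total weight $2\sigma_n+\al-1$ equals $2\tau_n+1$, the weight in $K_{n,\eps}^r$; redistributing $D^{\al-2}$ symmetrically onto both factors by self-adjointness (the commutators with $\dx\Phi^n$ being lower order) converts $D^{\al-2}\dx\psi$ and $\overline\psi$ into $\jb{D}^{\tau_n}\dx v^\eps$ and $\jb{D}^{\tau_n}\cj{v^\eps}$, and taking real parts produces exactly $-K_{n,\eps}^r$. That Proposition \ref{prop:comm3} carries no $\|\psi\|_{H^{\frac12+\eps}}$ factor is what makes this work for the full range $\al\in(2,3)$ rather than only $\al\ge3$.

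It remains to bound every other contribution by $C(E_\eps^s)(1+E_\eps^r)\|v^\eps\|_{H^{r-1}}+\frac\eps{2N}\|\dx v^\eps\|_{H^{r-1}}^2$. The coefficient term $c_n n\int_\T\Phi^{n-1}(\dt\Phi)|\psi|^2\,dx$ is handled through Lemma \ref{lem:noli} (which controls $\dt\Ta_\o^\eps$, hence $\dt\Phi$) together with duality and the product estimates of Section \ref{SEC:Bili}, the hypothesis $s>\frac\al2+1$ making the pairing admissible. The terms $(\Re\Ta_\o^\eps)\dx v^\eps$ and $\Ta_{\cj\o}^\eps\cj{\dx v^\eps}$ are harmless: the real, resp.\ doubly-conjugated, structure lets one integrate by parts to shift the top-order derivative onto the smooth coefficient, via $\Re\int_\T g(\dx\psi)\overline\psi\,dx=-\frac12\int_\T(\dx g)|\psi|^2\,dx$ and $\int_\T g\,\cj{\dx\psi}\,\overline\psi\,dx=-\frac12\int_\T(\dx g)\overline\psi^2\,dx$; the polynomial $\Rl^\eps$ is lower order by Corollary \ref{cor:bili3}. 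Finally the parabolic term $\eps\dx^2v^\eps$ yields, after one integration by parts, $-\eps\int_\T\Phi^n|\dx\psi|^2\,dx$ plus a term with one derivative on $\Phi^n$; since $\sigma_n+1\le r$, interpolating $\|\dx v^\eps\|_{H^{\sigma_n}}$ between $\|v^\eps\|_{H^{r-1}}$ and $\|\dx v^\eps\|_{H^{r-1}}$ and applying Young's inequality produces precisely the allowed $\frac\eps{2N}\|\dx v^\eps\|_{H^{r-1}}^2$ plus a $C(E_\eps^s)(1+E_\eps^r)\|v^\eps\|_{H^{r-1}}$ remainder. Throughout, every commutator remainder and product is split so that a single factor of $v^\eps$ sits at the top regularity $H^{r-1}$ while the others are measured at regularities controlled by $E_\eps^s$, which is exactly what $s>\max(\frac\al2+1,\frac52)$ guarantees through Propositions \ref{prop:commC}, \ref{prop:commne}, and \ref{prop:comm3} and Corollary \ref{cor:bili3}.
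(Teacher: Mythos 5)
Your overall skeleton---differentiate $L_{n,\eps}^r$, substitute \eqref{veps}, recover $K_{n+1,\eps}^r$ exactly from the transport term via $\dx(\Phi^{n+1})=(n+1)\Phi^n\dx\Phi$ and $\al(n+1)c_{n+1}=2c_n$, and treat the parabolic term, the coefficient term $\dt(\Phi^n)$, and the terms $(\Re\Ta_\o^\eps)\dx v^\eps$, $\Ta_{\cj\o}^\eps\cj{\dx v^\eps}$, $\Rl^\eps$ by the integrations by parts and commutator estimates you describe---coincides with the paper's proof. The gap is in the step you yourself call the heart of the argument: the dispersive term. You form the commutator at \emph{full order} $\al$, writing the contribution as $ic_n\int_\T\big(D^\al(\Phi^n\psi)-\Phi^nD^\al\psi\big)\cj{\psi}\,dx$, extract the dangerous piece $-\al(\dx\Phi^n)D^{\al-2}\dx\psi$, and estimate the remainder as ``the $D^\al$-analogue of $\Com{\al}{\Phi^n}(\psi)$'' by Proposition \ref{prop:comm3}. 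But that proposition with $s=\al$ costs
\[
\|\Phi^n\|_{H^{\max(\al,\frac52+\dl)}}\,\|\psi\|_{H^{\min(\al-2,\frac12+\dl)}}
+\|\Phi^n\|_{H^{\frac52+\dl}}\,\|\psi\|_{H^{\al-2}},
\]
and neither term is admissible under the hypotheses. The coefficient norm $\|\Phi^n\|_{H^{\al}}$ requires roughly $\Ta_\o^\eps\in H^{\al-1}$, i.e.\ $u^\eps\in H^{\al}$, which $s>\regwp$ does not give whenever $\al>\frac52$ and $s<\al$; worse, for $n=1$ one has $\|\psi\|_{H^{\al-2}}=\|v^\eps\|_{H^{r-1+\frac{\al-2}{2}}}$, strictly above $H^{r-1}$, hence not controlled by $E_\eps^r$ for \emph{any} $\al>2$. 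The same objection applies to invoking \eqref{fLeib1}: the paper uses it only heuristically, and its unquantified ``other terms'' (e.g.\ $(D^\al\Phi^n)\psi$ and $(D^{\al-2}\dx\Phi^n)\dx\psi$) again demand about $\al$ derivatives of $\Phi^n$.

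The missing idea is the paper's duality-first reduction. Before commuting anything, move half of the $\al$ derivatives across the pairing by self-adjointness: since $\sigma_n+\al=\tau_n+\frac{\al}{2}+1$, the dispersive term equals
\[
-2c_n\Im\int_\T\jb{D}^{\frac{\al}{2}+1}\big(\Phi^n\,\jb{D}^{\sigma_n}v^\eps\big)\,\jb{D}^{\tau_n}\cj{v^\eps}\,dx
\]
up to the harmless multiplier $\jb{D}^\al-D^\al$, and only then does one apply $\Com{\frac{\al}{2}+1}{\Phi^n}$. At order $\frac{\al}{2}+1$ the commutator estimate needs exactly $\|\Phi^n\|_{H^{\max(\frac{\al}{2}+1,\frac52+\dl)}}$---precisely what $s>\regwp$ supplies---and the rough factor appears as $\|\jb{D}^{\sigma_n}v^\eps\|_{H^{\frac{\al}{2}-1}}=\|v^\eps\|_{H^{\tau_n}}\le\|v^\eps\|_{H^{r-1}}$. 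The correction term of this commutator, the added $K_{n,\eps}^r$, and the leftover main term then cancel among themselves after one integration by parts (the identity $R_5^r=-R_6^r$ in the paper), leaving only a genuinely bounded commutator remainder; this is how $-K_{n,\eps}^r$ is actually produced. Your symmetric redistribution of $D^{\al-2}$ is fine for the dangerous term in isolation, but without the order-$\frac{\al}{2}+1$ reduction the remainder cannot be closed at the stated regularity, so as written the proof does not go through.
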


\begin{proof}
For simplicity, we suppress the time dependence in this proof.
Fix $n =1,\dots,N$.
It follows from \eqref{eq:Ln} that
\begin{equation}
\begin{aligned}
&\dt L_{n,\eps}^r
\\
&=
2 c_n \Re \int_\T ( \dx^{-1} \Im \Ta_\o^\eps )^n
\big( \jb{D}^{r-1-\frac {(\al-2)n}2} v^\eps \big) \jb{D}^{r-1-\frac {(\al-2)n}2} \cj{\dt v^\eps} dx
\\
&\quad
+
c_n \int_\T \dt ( ( \dx^{-1} \Im \Ta_\o^\eps )^n )
\big| \jb{D}^{r-1-\frac {(\al-2)n}2} v^\eps \big|^2 dx
\\
&=:
J^r + R_1^r.
\end{aligned}
\label{dtL}
\end{equation}

First,
we show that
\begin{equation}
|R_1^r| 
\le
C( \| u^\eps \|_{H^s} ) \| v^\eps\|_{H^{r-1}}^2
\label{remi1a}
\end{equation}
for $s>\regwp$.
When $\al \in (2, 3]$,
Proposition \ref{prop:bili} yields that
\begin{align*}
|R_1^r| 
&\les
\| \dt ((\dx^{-1} \Im \Ta_\o^\eps)^n) \|_{H^{-\al+\frac 52+\dl}} \big\| |\jb{D}^{r-1-\frac {(\al-2)n}2} v^\eps |^2 \big\|_{H^{\al-\frac 52 -\dl}}
\\
&\les
\| \dt ((\dx^{-1} \Im \Ta_\o^\eps)^n) \|_{H^{-\al+\frac 52+\dl}}
\| \jb{D}^{r-1-\frac {(\al-2)n}2} v^\eps\|_{H^{\frac {\al-2}2}}^2
\\
&\les
\| \dt ((\dx^{-1} \Im \Ta_\o^\eps)^n) \|_{H^{-\al+\frac 52+\dl}}
\| v^\eps\|_{H^{r-1}}^2
\end{align*}
for $0<\dl \ll 1$.
Moreover,
Proposition \ref{prop:bili} and Lemma \ref{lem:noli} imply that
\begin{align*}
&\| \dt ((\dx^{-1} \Im \Ta_\o^\eps)^n) \|_{H^{-\al+\frac 52+\dl}}
\\
&\les
\| \dt \dx^{-1} \Im \Ta_\o^\eps \|_{H^{-\al+\frac 52+\dl}} \| (\dx^{-1} \Im \Ta_\o^\eps)^{n-1} \|_{H^{\frac 12+\dl}}
\\
&\le
C( \| u^\eps \|_{H^s} )
\end{align*}
for $s>\regwp$ and $0<\dl \ll 1$.
Hence, we have \eqref{remi1a} for $\al \in (2, 3]$ and $s>\regwp$.

When $\al>3$, 
Proposition \ref{prop:bili} yields that
\begin{align*}
|R_1^r|
&\les
\| \dt ((\dx^{-1} \Im \Ta_\o^\eps)^n) \|_{H^{-\frac \al2 +1+\dl}} 
\big \| |\jb{D}^{r-1-\frac {(\al-2)n}2} v^\eps|^2 \big \|_{H^{\frac \al2 -1-\dl}}
\\
&\les
\| \dt ((\dx^{-1} \Im \Ta_\o^\eps)^n) \|_{H^{-\frac \al2 +1+\dl}} 
\| \jb{D}^{r-1-\frac {(\al-2)n}2} v^\eps\|_{H^{\frac {\al-2}2}}^2
\end{align*}
for $0<\dl \ll 1$.
Moreover,
Proposition \ref{prop:bili} and
Lemma \ref{lem:noli} imply that
\begin{align*}
\| \dt ((\dx^{-1} \Im \Ta_\o^\eps)^n) \|_{H^{-\frac \al2 +1+\dl}}
&\les
\| \dt \dx^{-1} \Ta_\o^\eps \|_{H^{-\frac \al2 +1+\dl}} \| (\dx^{-1} \Ta_\o^\eps)^{n-1} \|_{H^{\frac \al 2-1}}
\\
&\le
C( \| u^\eps \|_{H^s} )
\end{align*}
for $s>\regwp$.
Hence,
we  have \eqref{remi1a} for $\al>3$ and $s>\regwp$.

Second, we consider $J^r$ in \eqref{dtL}.
From \eqref{veps}, $\cj v^\eps$ satisfies
\[
\dt \cj {v^\eps}
=
i \cj{D^\al v^\eps} + \eps \cj{\dx^2 v^\eps} - i (P_{\neq 0} \Im \Ta_\o^\eps) \cj {\dx v^\eps}
+ (\Re \Ta_\o^\eps) \cj {\dx v^\eps} + \cj{\Ta_{\cj \o}^\eps} \dx v^\eps + \cj{\Rl^\eps}.
\]
Hence, we have
\begin{align}
J^r
&=
-2 c_n \Im \int_\T (\dx^{-1}  \Im \Ta_\o^\eps)^n \big( \jb{D}^{r-1-\frac {(\al-2)n}2} v^\eps \big) \jb{D}^{r-1-\frac {(\al-2)n}2} D^\al \cj{v^\eps}dx
\notag
\\
&\quad
+
2c_n \eps \Re \int_\T (\dx^{-1}  \Im \Ta_\o^\eps)^n  \big( \jb{D}^{r-1-\frac {(\al-2)n}2} v^\eps \big) \jb{D}^{r-1-\frac {(\al-2)n}2} \cj{\dx^2 v^\eps}dx
\notag
\\
&\quad
+
2c_n \Im \int_\T (\dx^{-1}  \Im \Ta_\o^\eps)^n  (P_{\neq 0} \Im \Ta_\o^\eps) \big( \jb{D}^{r-1-\frac {(\al-2)n}2} v^\eps \big)
\notag
\\
&\hspace*{80pt}
\times
\jb{D}^{r-1-\frac {(\al-2)n}2} \cj{\dx v^\eps}dx
\notag
\\
&\quad
+
2c_n \Re \int_\T (\dx^{-1}  \Im \Ta_\o^\eps)^n (\Re \Ta_\o^\eps) \big( \jb{D}^{r-1-\frac {(\al-2)n}2} v^\eps \big)
\notag
\\
&\hspace*{80pt}
\times
\jb{D}^{r-1-\frac {(\al-2)n}2} \cj{\dx v^\eps}dx
\notag
\\
&\quad
+
2c_n \Re \int_\T (\dx^{-1}  \Im \Ta_\o^\eps)^n \cj{\Ta_{\cj \o}^\eps} \big( \jb{D}^{r-1-\frac {(\al-2)n}2} v^\eps \big) \jb{D}^{r-1-\frac {(\al-2)n}2} \dx v^\eps dx
\notag
\\
&\quad
+
R_2^r
\notag
\\
&=:
\sum_{\l=1}^5 J_{\l}^r + R_2^r,
\label{Jr}
\end{align}
where
\begin{equation*}
\begin{aligned}
R_2^r
&:=
2c_n \Im \int_\T (\dx^{-1}  \Im \Ta_\o^\eps)^n \big( \jb{D}^{r-1-\frac {(\al-2)n}2} v^\eps \big) 
\\
&\hspace*{60pt}
\times
\big[\jb{D}^{r-1-\frac {(\al-2)n}2},  \Im \Ta_\o^\eps \big]  \cj{\dx v^\eps}dx
\\
&\quad
+
2c_n \Re \int_\T (\dx^{-1}  \Im \Ta_\o^\eps)^n \big( \jb{D}^{r-1-\frac {(\al-2)n}2} v^\eps \big)
\\
&\hspace*{60pt}
\times
\big[\jb{D}^{r-1-\frac {(\al-2)n}2}, \Re \Ta_\o^\eps \big]\cj{\dx v^\eps}dx
\\
&\quad
+
2c_n \Re \int_\T (\dx^{-1}  \Im \Ta_\o^\eps)^n \big( \jb{D}^{r-1-\frac {(\al-2)n}2} v^\eps \big) 
\big[\jb{D}^{r-1-\frac {(\al-2)n}2}, \cj{\Ta_{\cj \o}^\eps} \big] \cj {\dx v^\eps}dx
\\
&\quad
+
2c_n \Re \int_\T (\dx^{-1}  \Im \Ta_\o^\eps)^n \big( \jb{D}^{r-1-\frac {(\al-2)n}2} v^\eps \big) 
\jb{D}^{r-1-\frac {(\al-2)n}2} \cj{\Rl^\eps}dx.
\end{aligned}
\end{equation*}

Third,
we estimate $R_2^r$. We rewrite $R_2^r$ as follows:
\[
R_2^r =: R_{2,1}^r + R_{2,2}^r + R_{2,3}^r + R_{2,4}^r.
\]
Note that
we have
\begin{equation}
\big\| \big[ \jb{D}^{r-1-\frac {(\al-2)n}2},  \Im \Ta_\o^\eps \big] \dx v^\eps \big\|_{L^2}
\le
C( \| u^\eps \|_{H^s})
(1+\|v^\eps\|_{H^{r-1}})
\label{est:commC}
\end{equation}
for $s> \frac 52$ and $r \ge 1$.
Indeed,
when $r \ge \frac {(\al-2)n}2+1$,
Proposition \ref{prop:commC} and Corollary \ref{cor:bili3} yield that
\begin{align*}
&\big\| \big[ \jb{D}^{r-1-\frac {(\al-2)n}2},  \Im \Ta_\o^\eps \big] \dx v^\eps \big\|_{L^2}
\\
&\les
\| \Ta_\o^\eps \|_{H^{s-1}} \|v^\eps\|_{H^{r-1}} + \|\Ta_\o^\eps\|_{H^{r-1}} \|v^\eps\|_{H^{s-1}}
\\
&\le
C( \| u^\eps \|_{H^s})
(1+\|v^\eps\|_{H^{r-1}})
\end{align*}
for $s> \frac 52$.
When $1 \le r < \frac {(\al-2)n}2+1$,
Proposition \ref{prop:commne} and Corollary \ref{cor:bili3} yield that
\[
\big\| \big[ \jb{D}^{r-1-\frac {(\al-2)n}2},  \Im \Ta_\o^\eps \big] \dx v^\eps \big\|_{L^2}
\les
\| \Ta_\o^\eps \|_{H^{s-1}} \|v^\eps\|_{L^2}
\le
C( \| u^\eps \|_{H^s})
\|v^\eps\|_{H^{r-1}}
\]
for $s> \frac 52$.

With \eqref{est:commC},
Corollary \ref{cor:bili3} yields that
\begin{equation}
\begin{aligned}
|R_{2,1}^r|
&\le
2c_n
\| (\dx^{-1}  \Im \Ta_\o^\eps)^n \|_{L^\infty}
\| \jb{D}^{r-1} v^\eps\|_{L^2}
\\
&\qquad \times
\big\| \big[\jb{D}^{r-1-\frac {(\al-2)n}2},  \Im \Ta_\o^\eps \big] \cj{\dx v^\eps} \big\|_{L^2}
\\
&\le
C( \| u^\eps \|_{H^s})
(1+\|v^\eps\|_{H^{r-1}})
\|v^\eps\|_{H^{r-1}}
\label{remi21}
\end{aligned}
\end{equation}
for $s>\frac 52$ and $r \ge 1$.
The same calculation as in \eqref{remi21} yields that
\begin{equation}
|R_{2,2}^r| + |R_{2,3}^r|
\le
C( \| u^\eps \|_{H^s})
(1+\|v^\eps\|_{H^{r-1}})\|v^\eps\|_{H^{r-1}}.
\label{remi2223}
\end{equation}
From Corollary \ref{cor:bili3},
we have
\begin{equation}
\begin{aligned}
|R_{2,4}^r| 
&\le 
\| (\dx^{-1}  \Im \Ta_\o^\eps)^n \|_{L^\infty} \|v^\eps\|_{H^{r-1}} \|\Rl^\eps\|_{H^{r-1}}
\\
&\le
C( \| u^\eps \|_{H^s}) (1 + \|v^\eps\|_{H^{r-1}}) \|v^\eps\|_{H^{r-1}}.
\end{aligned}
\label{remi24}
\end{equation}
It follows from \eqref{remi21}--\eqref{remi24} that
\begin{equation}
|R_2^r|
\le
C( \| u^\eps \|_{H^s}) (1 + \|v^\eps\|_{H^{r-1}}) \|v^\eps\|_{H^{r-1}}
\label{remi2}
\end{equation}
for $s> \frac 52$ and $r \ge 1$.

Fourth,
we consider $J_1^r$ in \eqref{Jr}.
It follows from \eqref{Jr} that
\begin{equation}
\begin{aligned}
J_1^r
&=
-2 c_n \Im \int_\T (\dx^{-1}  \Im \Ta_\o^\eps)^n \big( \jb{D}^{r-1-\frac {(\al-2)n}2} v^\eps \big)
\\
&\hspace*{60pt}
\times
\jb{D}^{r-1-\frac {(\al-2)n}2} \jb{D}^\al \cj{v^\eps}dx
\\
&\quad
+2 c_n \Im \int_\T (\dx^{-1}  \Im \Ta_\o^\eps)^n \big( \jb{D}^{r-1-\frac {(\al-2)n}2} v^\eps \big)
\\
&\hspace*{60pt}
\times
\jb{D}^{r-1-\frac {(\al-2)n}2}
\big( \jb{D}^\al - D^\al \big) \cj{v^\eps}dx
\\
&=: J_{1,\ast}^r + R_3^r.
\end{aligned}
\label{Jr1a}
\end{equation}
Here, we note that
\[
\jb{k}^\al - |k|^\al
= \frac \al 2 \int_0^1 ( \theta +|k|^2 )^{\frac \al 2-1} d\theta
\les \jb{k}^{\al-2}
\]
for $\al>0$ and $k \in \Z$.
Hence,
we have
\[
\big\| \jb{D}^{r-1-\frac {(\al-2)n}2} \big( \jb{D}^\al - D^\al \big) \cj{v^\eps} \big\|_{H^{-\frac \al2+1}}
\les
\| v^\eps \|_{H^{r-1}}.
\]
With Proposition \ref{prop:bili},
we obtain that
\begin{equation}
\begin{aligned}
|R_3^r|
&\les
\big\| (\dx^{-1}  \Im \Ta_\o^\eps)^n \big( \jb{D}^{r-1-\frac {(\al-2)n}2} v^\eps \big) \big\|_{H^{\frac \al2-1}}
\\
&\hspace*{60pt}
\times
\big\| \jb{D}^{r-1-\frac {(\al-2)n}2} \big( \jb{D}^\al - D^\al \big) \cj{v^\eps} \big\|_{H^{-\frac \al2+1}}
\\
&\les
\big\| (\dx^{-1}  \Im \Ta_\o^\eps)^n \big\|_{H^{s-1}}
\| v^\eps \|_{H^{r-1}}^2
\\
&\le
C( \| u^\eps \|_{H^s}) \| v^\eps \|_{H^{r-1}}^2
\end{aligned}
\label{Jr1b}
\end{equation}
for $s> \max (\frac \al 2, \frac 32)$.

It follows from \eqref{Jr1a}, \eqref{eq:Kn}, and \eqref{comm}  that
\begin{align}
J_{1,\ast}^r + K_{n,\eps}^r
&=
-2 c_n \Im \int_\T
\jb{D}^{\frac \al 2+1} \Big(
(\dx^{-1}  \Im \Ta_\o^\eps)^n \big( \jb{D}^{r-1-\frac {(\al-2)n}2} v^\eps \big) 
\Big)
\notag
\\
&\hspace*{40pt}
\times
\jb{D}^{r-1-\frac {(\al-2)(n-1)}2} \cj{v^\eps}dx
\notag
\\
&\quad
- \al c_n \Im \int_\T \dx \big((\dx^{-1}  \Im \Ta_\o^\eps)^n \big) 
\big( \jb{D}^{r-1-\frac {(\al-2)(n-1)}2} \dx v^\eps \big)
\notag
\\
&\hspace*{40pt}
\times
\jb{D}^{r-1-\frac {(\al-2)(n-1)}2} \cj{v^\eps}dx
\notag
\\
&=
-2 c_n \Im \int_\T \Com{\frac \al2 + 1}{(\dx^{-1} \Im \Ta_\o^\eps)^n} \big( \jb{D}^{r-1-\frac {(\al-2)n}2} v^\eps \big)
\notag
\\
&\hspace*{40pt}
\times
\jb{D}^{r-1-\frac {(\al-2)(n-1)}2} \cj{v^\eps} dx
\notag
\\
&\quad
-
2 c_n \Im \int_\T  (\dx^{-1} \Im \Ta_\o^\eps)^n 
\big (\jb{D}^{r-1-\frac {(\al-2)n}2 + \frac \al2 +1} v^\eps \big)
\notag
\\
&\hspace*{40pt}
\times
\jb{D}^{r-1-\frac {(\al-2)(n-1)}2} \cj{v^\eps} dx
\notag
\\
&\quad
+
2 c_n \Im \int_\T \dx \big((\dx^{-1}  \Im \Ta_\o^\eps)^n \big) 
\big(\jb{D}^{r-1-\frac {(\al-2)(n-1)}2} \dx v^\eps \big)
\notag
\\
&\hspace*{40pt}
\times
\jb{D}^{r-1-\frac {(\al-2)(n-1)}2} \cj{v^\eps}dx
\notag
\\
&=:
R_4^r + R_5^r + R_6^r.
\label{J1}
\end{align}
Propositions \ref{prop:comm3} and \ref{prop:bili2} yield that
\begin{equation}
\begin{aligned}
|R_4^r| 
&\les
\Big(
\|(\dx^{-1}  \Im \Ta_\o^\eps)^n\|_{H^{\frac \al2 +1}}
+
\|(\dx^{-1}  \Im \Ta_\o^\eps)^n\|_{H^{\frac 52+\dl}}
\Big)
\\
&\qquad \times
\big\| \jb{D}^{r-1-\frac {(\al-2)n}2} v^\eps \big\|_{H^{\frac {\al-2}2}}
\|v^\eps\|_{H^{r-1}}
\\
&\le
C( \| u^\eps \|_{H^s}) \| v^\eps \|_{H^{r-1}}^2
\end{aligned}
\label{remi3}
\end{equation}
for $s > \regwp$ and $0<\dl \ll 1$.
From $\jb{D}^2= 1 - \dx^2$ and intgrating by parts, we have
\begin{equation}
\begin{aligned}
R_5^r 
&=
-2 c_n \Im \int_\T (\dx^{-1}  \Im \Ta_\o^\eps)^n
\, \big( \jb{D}^{r-1-\frac {(\al-2)(n-1)}2} \jb{D}^2 v^\eps \big)
\\
&\hspace*{60pt}
\times
\jb{D}^{r-1-\frac {(\al-2)(n-1)}2} \cj{v^\eps} dx
\\
&=
2 c_n \Im \int_\T (\dx^{-1}  \Im \Ta_\o^\eps)^n
\, \big( \jb{D}^{r-1-\frac {(\al-2)(n-1)}2} \dx^2 v^\eps \big)
\\
&\hspace*{60pt}
\times
\jb{D}^{r-1-\frac {(\al-2)(n-1)}2} \cj{v^\eps} dx
\\
&
=
- R_6^r
.
\end{aligned}
\label{remi4}
\end{equation}
From \eqref{Jr1a}--\eqref{remi4}, we obtain
\begin{equation}
|J_1^r + K_{n,\eps}^r|
\le
C( \| u^\eps \|_{H^s}) \| v^\eps \|_{H^{r-1}}^2
\label{J1Kn}
\end{equation}
for $s > \regwp$.

Fifth,
we treat $J_2^r$ in \eqref{Jr}.
Since 
\[
2 \Re (f \cj{\dx^2 f})
= \dx^2 (| f |^2 ) - 2|\dx f|^2
\]
for $f \in C^2(\T)$,
we obtain that
\begin{align}
J_2^r
&=
\eps c_n \int_\T (\dx^{-1}  \Im \Ta_\o^\eps)^n
\dx^2 
\big( \big| \jb{D}^{r-1-\frac{(\al-2)n}2} v^\eps \big|^2 \big) dx
\notag
\\
&\quad
-2 \eps c_n \int_\T (\dx^{-1} \Im \Ta_\o^\eps)^n 
\big| \jb{D}^{r-1-\frac{(\al-2)n}2} \dx v^\eps \big|^2  dx
\notag
\\
&=
\eps c_n \int_\T \big( \dx^2 (\dx^{-1}  \Im \Ta_\o^\eps)^n) \big)
\big| \jb{D}^{r-1-\frac{(\al-2)n}2} v^\eps \big|^2 dx
\notag
\\
&\quad
-2 \eps c_n \int_\T (\dx^{-1}  \Im \Ta_\o^\eps)^n
\big| \jb{D}^{r-1-\frac{(\al-2)n}2} \dx v^\eps \big|^2  dx
\notag
\\
&=: R_7^r + I^r.
\label{J2}
\end{align}
Since Corollary \ref{cor:bili3} yields
\[
\| \dx^2 ((\dx^{-1}  \Im \Ta_\o^\eps)^n)  \|_{L^\infty}
\les
\| (\dx^{-1}  \Im \Ta_\o^\eps)^n \|_{H^{s}}
\les
\| \Ta_\o^\eps \|_{H^{s-1}}^n
\le
C( \| u^\eps \|_{H^s})
\]
for $s>\frac 52$,
we have
\begin{equation}
|R_7^r|
\le
C( \| u^\eps \|_{H^s})
\| v^\eps \|_{H^{r-1}}^2
.
\label{remi6}
\end{equation}
For $I^r$,
from \eqref{conN} and Young's inequality,
we have
\begin{equation}
\begin{aligned}
|I^r|
&\le
2 \eps c_n
\| \dx^{-1}  \Im \Ta_\o^\eps \|_{L^\infty}^n
\big\| \jb{D}^{r-1-\frac {(\al-2)n}2} \dx v^\eps \big\|_{L^2}^2
\\
&
\le
\eps
C( \| u^\eps \|_{H^s})
\big\| \jb{D}^{r-1-\frac {(\al-2)N}2} \dx v^\eps \big\|_{L^2}^{2\frac nN} 
\| \jb{D}^{r-1} \dx v^\eps \|_{L^2}^{2(1-\frac nN)}
\\
&
\le
\eps
C( \| u^\eps \|_{H^s})
\big\| \jb{D}^{r- \frac 32} \dx v^\eps \big\|_{L^2}^{2\frac nN} 
\| \jb{D}^{r-1} \dx v^\eps \|_{L^2}^{2(1-\frac nN)}\\
&
\le
\eps
C( \| u^\eps \|_{H^s})
\big\| \jb{D}^{r-2} \dx v^\eps \big\|_{L^2}^{\frac nN} 
\| \jb{D}^{r-1} \dx v^\eps \|_{L^2}^{2-\frac n{N}}
\\
&\le
\frac \eps{2N} \| \dx v^\eps\|_{H^{r-1}}^2
+
C( \| u^\eps \|_{H^s})
\| v^\eps\|_{H^{r-1}}^2
\end{aligned}
\label{Ir2}
\end{equation}
for $s> \frac 52$ and $r \in \R$.
Combining \eqref{J2}--\eqref{Ir2}, we obtain that
\begin{equation}
\begin{aligned}
|J_2^r|
\le
C( \| u^\eps \|_{H^s})
\| v^\eps \|_{H^{r-1}}^2
+ \frac \eps{2N} \| \dx v^\eps\|_{H^{r-1}}^2
\end{aligned}
\label{J2r}
\end{equation}
for $s > \frac 52$ and $r \in \R$.

Sixth,
we estimate $J_3^r$ in \eqref{Jr}.
By definition,
it holds that 
\[
\dx (\dx^{-1} \Im \Ta_\o^\eps) = P_{\neq 0} \Im \Ta_\o^\eps.
\]
Therefore, it follows that
\[
(\dx^{-1}  \Im \Ta_\o^\eps)^n  P_{\neq 0} \Im \Ta_\o^\eps
= \frac1{n+1} \dx \big( (\dx^{-1}  \Im \Ta_\o^\eps)^{n+1} \big).
\]
From \eqref{Jr} and \eqref{eq:Kn}, we have
\begin{align}
J_3^r
&=
-\frac{2 c_n}{n+1} \Im \int_\T \dx \big( (\dx^{-1}  \Im \Ta_\o^\eps)^{n+1} \big)
\notag 
\\
&\hspace*{60pt}
\times
\big( \jb{D}^{r-1-\frac {(\al-2)n}2} \dx v^\eps \big) \, \jb{D}^{r-1-\frac {(\al-2)n}2} \cj{v^\eps} dx
\notag
\\
&= K_{n+1,\eps}^r.
\label{J3}
\end{align}

Finally, we consider $J_4^r$ and $J_5^r$ in \eqref{Jr}.
From a simple calculation, we have
\begin{equation*}
\begin{aligned}
J_4^r
&=
c_n \int_\T (\dx^{-1}  \Im \Ta_\o^\eps)^n (\Re \Ta_\o^\eps) \,
\dx \big( \big| \jb{D}^{r-1-\frac {(\al-2)n}2} v^\eps \big|^2 \big) dx
\\
&=
-c_n \int_\T \dx \big( (\dx^{-1}  \Im \Ta_\o^\eps)^n \Re \Ta_\o^\eps \big) 
\big| \jb{D}^{r-1-\frac {(\al-2)n}2} v^\eps \big|^2 dx.
\end{aligned}
\end{equation*}
Similarly, we obtain that
\begin{equation*}
\begin{aligned}
J_5^r
&=
c_n \Re \int_\T (\dx^{-1}  \Im \Ta_\o^\eps)^n \cj{\Ta_{\cj \o}^\eps} \,
\dx \big( (\jb{D}^{r-1-\frac {(\al-2)n}2} v^\eps)^2 \big) dx
\\
&=
-c_n \Re \int_\T \dx \big( (\dx^{-1}  \Im \Ta_\o^\eps)^n \cj {\Ta_{\cj \o}^\eps} \big) 
\big( \jb{D}^{r-1-\frac {(\al-2)n}2} v^\eps \big)^2 dx.
\end{aligned}
\end{equation*}
By Proposition \ref{prop:bili}, we obtain that
\begin{equation}
\begin{aligned}
|J_4^r|
&\le
\| \dx \big( (\dx^{-1}  \Im \Ta_\o^\eps)^n \Re \Ta_\o^\eps \big) \|_{L^\infty}
\big\| \jb{D}^{r-1-\frac {(\al-2)n}2} v^\eps \big\|_{L^2}^2
\\
&\les
\| \dx^{-1}  \Im \Ta_\o^\eps \|_{H^{s-1}}^n \| \Re \Ta_\o^\eps \|_{H^{s-1}}
\| v^\eps \|_{H^{r-1}}^2
\\
&\le
C( \| u^\eps \|_{H^s})
\| v^\eps \|_{H^{r-1}}^2
\end{aligned}
\label{J4}
\end{equation}
for $s > \frac 52$.
It follows from a similar calculation as in \eqref{J4} that
\begin{equation}
|J_5^r|
\le
C( \| u^\eps \|_{H^s})
\| v^\eps \|_{H^{r-1}}^2.
\label{J5}
\end{equation}

Combining \eqref{dtL}--\eqref{Jr}, \eqref{remi2}, \eqref{J1Kn}, and \eqref{J2r}--\eqref{J5}, we get
\begin{equation*}
\begin{aligned}
&K_{n,\eps}^r+ \dt L_{n,\eps}^r - K_{n+1,\eps}^r
\\
&=
K_{n,\eps}^r + \sum_{\l=1}^5 J_\l^r - K_{n+1,\eps}^r + R_1^r + R_2^r
\\
&\le
C( \| u^\eps \|_{H^s})
\| v^\eps \|_{H^{r-1}}^2
+ \frac \eps{2N} \| \dx v^\eps\|_{H^{r-1}}^2
\end{aligned}
\end{equation*}
for $s> \regwp$ and $r \ge 1$.
With \eqref{ene1},
this concludes the proof.
\end{proof}

Next, we prove Lemma \ref{lem:exdtwLn}.

\begin{proof}
[Proof of Lemma \ref{lem:exdtwLn}]
As in the proof of Lemma \ref{lem:modi}, we suppress the time dependence in this proof. 
By \eqref{veps}, we have
\begin{equation}
\begin{aligned}
\frac12 \dt \big( \| v^\eps\|_{H^{r-1}}^2 \big)
&= 
\Re \big( \jb{D}^{r-1} \dt v^\eps, \jb{D}^{r-1} v^\eps \big)_{L^2}
\\
&=
-\eps \| \dx v^\eps\|_{H^{r-1}}^2
\\
&\quad
+
\Im \big( \jb{D}^{r-1} D^\al v^\eps, \jb{D}^{r-1} v^\eps \big)_{L^2}
\\
&\quad
-
\Im \big( \jb{D}^{r-1} ( (P_{\neq 0} \Im \Ta_\o^\eps) \dx v^\eps), \jb{D}^{r-1} v^\eps \big)_{L^2}
\\
&\quad
+
\Re \big(\jb{D}^{r-1}((\Re \Ta_\o^\eps) \dx v^\eps), \jb{D}^{r-1} v^\eps \big)_{L^2}
\\
&\quad
+
\Re \big( \jb{D}^{r-1}(\Ta_{\cj \o}^\eps \cj{\dx v^\eps}), \jb{D}^{r-1} v^\eps \big)_{L^2}
\\
&\quad
+
\Re \big( \jb{D}^{r-1} \Rl^\eps, \jb{D}^{r-1} v^\eps \big)_{L^2}
\\
&
=:
-\eps \| \dx v^\eps\|_{H^{r-1}}^2 + \sum_{j=1}^5 \I_j.
\end{aligned}
\label{exdtw}
\end{equation}

A direct calculation shows that
\begin{equation}
\begin{aligned}
\I_1 
=
\Im \int_\T | \jb{D}^{r-1} D^{\frac\al2} v^\eps|^2 dx
=0.
\end{aligned}
\label{exI1}
\end{equation}
We decompose $\I_2$ into two parts:
\begin{equation*}
\begin{aligned}
\I_2
&=
-\Im \big( \big[\jb{D}^{r-1}, P_{\neq 0} \Im \Ta_\o^\eps \big] \dx v^\eps, \jb{D}^{r-1} v^\eps \big)_{L^2}
\\
&\quad
- \Im \big( (P_{\neq 0} \Im \Ta_\o^\eps) \jb{D}^{r-1} \dx v^\eps, \jb{D}^{r-1} v^\eps \big)_{L^2}
\\
&=:\I_{2,1} + \I_{2,2}.
\end{aligned}
\end{equation*}
By Proposition \ref{prop:commC} and Corollary \ref{cor:bili3}, we have
\begin{equation}
\begin{aligned}
|\I_{2,1}| 
&\le 
\|[ \jb{D}^{r-1}, P_{\neq 0} \Im \Ta_\o^\eps] \dx v^\eps\|_{L^2} \|v^\eps\|_{H^{r-1}}
\\
&\les
(
\|P_{\neq 0} \Im \Ta_\o^\eps\|_{H^{s-1}} \|v^\eps\|_{H^{r-1}}
\\
&\hspace*{30pt}
+ \|P_{\neq 0} \Im \Ta_\o^\eps\|_{H^{r-1}} \|v^\eps\|_{H^{s-1}}
) \|v^\eps\|_{H^{r-1}}
\\
&\le
C( \| u^\eps \|_{H^s})(1+\|v^\eps\|_{H^{r-1}})\|v^\eps\|_{H^{r-1}}
\end{aligned}
\label{exI21}
\end{equation}
for $s>\frac 52$ and $r \ge 1$.
It follows from \eqref{eq:Kn} that
\begin{equation}
\I_{2,2} = \frac 12 K_{1,\eps}^r.
\label{exI22}
\end{equation}

We decompose $\I_3$ as follows:
\begin{equation*}
\begin{aligned}
\I_3
&=
\Re \big( \big[\jb{D}^{r-1}, \Re \Ta_\o^\eps \big] \dx v^\eps, \jb{D}^{r-1} v^\eps \big)_{L^2}
\\
&\quad
+
\Re \big( (\Re \Ta_\o^\eps) \jb{D}^{r-1} \dx v^\eps, \jb{D}^{r-1} v^\eps \big)_{L^2}
\\
&=:\I_{3,1} + \I_{3,2}.
\end{aligned}
\end{equation*}
As in \eqref{exI21}, we have
\begin{equation*}
\begin{aligned}
|\I_{3,1}| 
&\les 
(\|\Re \Ta_\o^\eps\|_{H^{s-1}} \|v^\eps\|_{H^{r-1}} + \|\Re \Ta_\o^\eps\|_{H^{r-1}} \|v^\eps\|_{H^{s-1}} )\| v^\eps\|_{H^{r-1}}
\\
&\le
C( \| u^\eps \|_{H^s})(1+\|v^\eps\|_{H^{r-1}}) \|v^\eps\|_{H^{r-1}}
\end{aligned}
\end{equation*}
for $s>\frac 52$ and $r \ge 1$.
From Corollary \ref{cor:bili3}, we get
\begin{equation*}
\begin{aligned}
\I_{3,2} 
&= 
\Re \int_\T (\Re \Ta_\o^\eps) (\jb{D}^{r-1} \dx v^\eps) \cj{\jb{D}^{r-1} v^\eps}dx
\\
&=
- \frac 12\int_\T \dx (\Re \Ta_\o^\eps) |\jb{D}^{r-1} v^\eps|^2 dx
\\
&\le
\|\dx \Re \Ta_\o^\eps\|_{L^\infty} \|\jb{D}^{r-1} v^\eps\|_{L^2}^2
\\
&\les
\| \Re \Ta_\o^\eps \|_{H^{s-1}} \|v^\eps\|_{H^{r-1}}^2
\\
&\le
C( \| u^\eps \|_{H^s})(1+\|v^\eps\|_{H^{r-1}})\|v^\eps\|_{H^{r-1}}.
\end{aligned}
\end{equation*}
Hence, we obtain that
\begin{equation}
|\I_3| \le
C( \| u^\eps \|_{H^s})(1+\|v^\eps\|_{H^{r-1}})\|v^\eps\|_{H^{r-1}}
\label{exI3}
\end{equation}
for $s>\frac 52$ and $r \ge 1$.

The same calculation as in \eqref{exI3} yields that
\begin{equation}
|\I_4| \le
C( \| u^\eps \|_{H^s})(1+\|v^\eps\|_{H^{r-1}})\|v^\eps\|_{H^{r-1}}.
\label{exI4}
\end{equation}

Since $\Rl^\eps$ is a polynomial in $u^\eps, v^\eps, \cj{u^\eps}, \cj{v^\eps}$,
Corollary \ref{cor:bili3} implies that
\begin{equation}
\begin{aligned}
|\I_5|
&\les 
\| \jb{D}^{r-1}\Rl^\eps\|_{L^2} \| \jb{D}^{r-1} v^\eps\|_{L^2}
\\
&\le
C( \| u^\eps \|_{H^s})(1+\|v^\eps\|_{H^{r-1}})\|v^\eps\|_{H^{r-1}}.
\end{aligned}
\label{exI5}
\end{equation}

From Lemma \ref{lem:modi}, \eqref{exdtw}--\eqref{exI5}, and \eqref{ene1},
we obtain that
\begin{equation}
\begin{aligned}
&\dt  \Big( \| v^\eps\|_{H^{r-1}}^2 + \sum_{n=1}^N L_{n,\eps}^r \Big)
\\
&
=
-2 \eps \| \dx v^\eps\|_{H^{r-1}}^2 + \sum_{n=1}^N \Big( K_{n,\eps}^r + \dt  L_{n,\eps}^r - K_{n+1,\eps}^r \Big) + K_{N+1,\eps}^r 
\\
&\quad
+ 2(\I_{2,1} + \I_3 + \I_4 + \I_5 )
\\
&\le
-\eps \| \dx v^\eps\|_{H^{r-1}}^2 + C(E_\eps^s)(1+E_\eps^r)\|v^\eps\|_{H^{r-1}} + |K_{N+1,\eps}^r|.
\end{aligned}
\label{exdtwL}
\end{equation}
It follows from
\eqref{eq:Kn}, \eqref{conN}, Proposition \ref{prop:bili}, and Corollary \ref{cor:bili3}
that
\begin{equation}
\begin{aligned}
|K_{N+1,\eps}^r|
&\les
\big\|
\dx \big( \big( \dx^{-1} \Im \Ta_\o^\eps \big)^{N+1} \big)
\cdot
\jb{D}^{r-1-\frac{(\al-2)N}2} \cj{v^\eps}
\big\|_{H^{1-\frac{(\al-2)N}2}}
\\
&\qquad
\times
\big\| \jb{D}^{r-1-\frac{(\al-2)N}2} \dx v^\eps \big\|_{H^{-1+\frac{(\al-2)N}2}}
\\
&\les
\big\|
\dx \big( \big( \dx^{-1} \Im \Ta_\o^\eps \big)^{N+1} \big)
\big\|_{H^{s-1}}
\big\| \jb{D}^{r-1-\frac{(\al-2)N}2} \cj{v^\eps} \big\|_{H^{\frac{(\al-2)N}2}}
\\
&\qquad
\times
\| v^\eps \|_{H^{r-1}}
\\
&\le
C(\| u^\eps \|_{H^s}) \|v^\eps\|_{H^{r-1}}^2
\end{aligned}
\label{exKN}
\end{equation}
for $s>\frac 32$.
With \eqref{exdtwL} and \eqref{exKN},
we obtain the desired bound.
\end{proof}

\subsection{Energy estimates for the difference}
\label{Subsec:estdiff}

In this subsection,
we prove an estimate for the difference of two solutions to \eqref{fNLS}.
We use the same notation as in Subsection \ref{SUBSEC:energy}.
In particular,
let $s>\frac 32$ and $\phi \in H^s(\T)$
and let $u^\eps$ denote the solution to \eqref{RfNLS} obtained in Proposition \ref{prop:exRfNLS}.

Let $\eps_1 ,\eps_2 \in (0,1)$ satisfy $\eps_1>\eps_2$.
We set
\[
\br u = u^{\eps_1} - u^{\eps_2},
\qquad
\br v = v^{\eps_1} - v^{\eps_2},
\]
for short.

It follows from \eqref{ueps} that
\begin{align}
\dt \br u + i D^\al \br u
&=
\eps _1 \dx^2 \br u
+ (\eps_1 - \eps_2) \dx^2 u^{\eps_2}
+
\Ta^{\eps_1} - \Ta^{\eps_2} .
\label{bru}
\end{align}
Moreover,
by \eqref{veps}, we have
\begin{equation}
\begin{aligned}
\dt \br v + i D^\al \br v
&=
\eps _1 \dx^2 \br v
+(\eps_1 - \eps_2) \dx^2 v^{\eps_2}
\\
&\quad
+i (P_{\neq 0} \Im \Ta_{\o}^{\eps_1}) \dx \br v
+(\Re \Ta_\o^{\eps_1})\dx \br v
+\Ta_{\cj \o}^{\eps_1} \cj{\dx \br v}
\\
&\quad
+i (P_{\neq 0} \Im \Ta_{\o}^{\eps_1} - P_{\neq 0} \Im \Ta_{\o}^{\eps_2}) \dx v^{\eps_2}
\\
&\quad
+ (\Re \Ta_{\o}^{\eps_1} - \Re \Ta_{\o}^{\eps_2}) \dx v^{\eps_2}
\\
&\quad
+( \Ta_{\cj \o}^{\eps_1} - \Ta_{\cj \o}^{\eps_2}) \cj{\dx v^{\eps_2}}
\\
&\quad
+ \br R
,
\end{aligned}
\label{brv}
\end{equation}
where
\begin{equation}
\br R
:=
\Ta_{\zeta}^{\eps_1} \br v
+ \Ta_{\cj \zeta}^{\eps_1} \cj {\br v}
+ (\Ta_{\zeta}^{\eps_1} - \Ta_{\zeta}^{\eps_2}) v^{\eps_2}
+ ( \Ta_{\cj \zeta}^{\eps_1} - \Ta_{\cj \zeta}^{\eps_2}) \cj {v^{\eps_2}}.
\label{brR}
\end{equation}

Define
\begin{align}
\br L_n (t)
&:=
c_n \int_\T (\dx^{-1} \Im \Ta_\o^{\eps_1} (t))^n 
\big| \jb{D}^{-\frac {(\al-2)n}2} \br v (t) \big|^2 dx,
\label{diffL}
\\
\br K_n (t)
&:=
-\al c_n \Im \int_\T \dx \big( (\dx^{-1} \Im \Ta_\o^{\eps_1})^n (t) \big)
\notag
\\
&\hspace*{40pt}
\times
\big( \jb{D}^{-\frac {(\al-2)(n-1)}2} \dx \br v (t) \big) \jb{D}^{-\frac {(\al-2)(n-1)}2} \cj{\br v (t)} dx
\label{diffK}
\end{align}
for $n \in \N$,
where $c_n$ is defined in \eqref{eq:Lncon1}.

By \eqref{est:Lnes} with $r=1$,
there exists $b_n = b_n(\|u^{\eps_1}\|_{L_T^\infty H^s})>0$ such that
\begin{align*}
\big|\br L_n (t) \big| 
&\le
\frac 1{10 n^2} \| \br v (t)\|_{L^2}^2
+
a_n \| \br u (t) \|_{L^2}^2
\| \dx^{-1} \Im \Ta_\o^{\eps_1} (t) \|_{L^\infty}^{2N}
\\
&\le
\frac 1{10 n^2} \| \br v (t)\|_{L^2}^2
+
b_n \| \br u (t) \|_{L^2}^2
\end{align*}
for $n=1,\dots, N$,
where $N$ is defined in \eqref{conN}.
Hence, we have
\begin{equation}
\bigg| \sum_{n=1}^N \br L_n (t) \bigg|
\le
\frac 12
\| \br v (t)\|_{L^2}^2
+
\sum_{n=1}^N b_n \| \br u (t) \|_{L^2}^2.
\label{diffLest}
\end{equation}

Set
\[
b := 2 \sum_{n=1}^N b_n.
\]
We then define
\begin{equation}
\br E (t) 
:=
\Big( b \|\br u (t)\|_{L^2}^2 + \| \br v (t)\|_{L^2}^2 + \sum_{n=1}^N \br L_n (t)
\Big)^{\frac12}.
\label{brEs}
\end{equation}
It follows from \eqref{diffLest} that
\begin{equation}
\frac b2 \|\br u(t)\|_{L^2}^2 + \frac 12 \| \br v(t)\|_{L^2}^2
\le
\br E (t)^2
\le
2 b \|\br u(t)\|_{L^2}^2 + 2 \| \br v(t)\|_{L^2}^2
\label{diff2a}
\end{equation}
for $t \in [0,T]$.

For the energy estimate for the difference,
we use the following lemma.

\begin{lemma}
\label{lem:dimodi}
Let $\al > 2$ and $1 > \eps_1 > \eps_2 > 0$.
Suppose that
\[
s> \regwp.
\]
Then,
we have
\[
\begin{aligned}
&\br K_n(t) + \dt \br L_n(t) - \br K_{n+1}(t)
\\
&\le
C( \| \phi \|_{H^s} )(\eps_1 - \eps_2 + \br E (t))
\| \br v(t)\|_{L^2}
+ \frac {\eps_1}{2N} \| \dx \br v(t) \|_{L^2}^2
\end{aligned}
\]
for any $n = 1, \cdots, N$ and $t \in (0,T)$,
where $N$ is defined in \eqref{conN}.
\end{lemma}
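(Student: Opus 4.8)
The plan is to follow the proof of Lemma \ref{lem:modi} almost verbatim, specialized to $r=1$ (so the weight reads $\jb{D}^{-\frac{(\al-2)n}2}$ and $\br v$ is measured in $L^2$) and with $v^\eps$ replaced by $\br v$ and the coefficient frozen at the $\eps_1$-solution. Fixing $n\in\{1,\dots,N\}$, I would differentiate \eqref{diffL} and write $\dt\br L_n=\br J+\br R_1$, where $\br J$ gathers the terms in which $\dt$ hits $\br v$ and $\br R_1$ the term in which $\dt$ hits $(\dx^{-1}\Im\Ta_\o^{\eps_1})^n$. The term $\br R_1$ is treated exactly as $R_1^r$ in Lemma \ref{lem:modi}: Lemma \ref{lem:noli} applied to $u^{\eps_1}$ (whose $H^s$-norm is $\le C(\|\phi\|_{H^s})$ by Corollary \ref{cor:enebdur}) controls $\dt\Ta_\o^{\eps_1}$, and the weight absorbs the half-derivative from $|\jb{D}^{-\frac{(\al-2)n}2}\br v|^2$, giving $|\br R_1|\le C(\|\phi\|_{H^s})\|\br v\|_{L^2}^2$.

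For $\br J$ I would substitute \eqref{brv} for $\dt\cj{\br v}$. The contributions of the terms of \eqref{brv} that are structurally identical to \eqref{veps} — the dispersive term, the resonant term $i(P_{\neq0}\Im\Ta_\o^{\eps_1})\dx\br v$, the harmless terms $(\Re\Ta_\o^{\eps_1})\dx\br v$ and $\Ta_{\cj\o}^{\eps_1}\cj{\dx\br v}$, and the diagonal diffusion $\eps_1\dx^2\br v$ — reproduce the estimates for $J_1^r,\dots,J_5^r$ and $I^r$ verbatim. In particular the dispersive piece combines with $\br K_n$ via $\Com{\frac\al2+1}{(\dx^{-1}\Im\Ta_\o^{\eps_1})^n}$ and Proposition \ref{prop:comm3} (legitimate because $s>\regwp$), together with the exact cancellation of the two $\dx^2$-type remainders (the analogue of $R_5^r=-R_6^r$); the resonant piece reproduces $\br K_{n+1}$ through $\dx((\dx^{-1}\Im\Ta_\o^{\eps_1})^{n+1})=(n+1)(\dx^{-1}\Im\Ta_\o^{\eps_1})^nP_{\neq0}\Im\Ta_\o^{\eps_1}$; and the diagonal diffusion yields the damping $\frac{\eps_1}{2N}\|\dx\br v\|_{L^2}^2$ after interpolation and Young, where \eqref{conN} is used to distribute the weight. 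All commutators and lower-order pieces are again bounded by $C(\|\phi\|_{H^s})\|\br v\|_{L^2}^2$.

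The genuinely new contributions are the inhomogeneous source terms of \eqref{brv}: the nonlinearity differences $(\Ta_\o^{\eps_1}-\Ta_\o^{\eps_2})\dx v^{\eps_2}$, $(\Re\Ta_\o^{\eps_1}-\Re\Ta_\o^{\eps_2})\dx v^{\eps_2}$, $(\Ta_{\cj\o}^{\eps_1}-\Ta_{\cj\o}^{\eps_2})\cj{\dx v^{\eps_2}}$ and $\br R$, and the diffusion source $(\eps_1-\eps_2)\dx^2 v^{\eps_2}$. The nonlinearity differences I would pair against $(\dx^{-1}\Im\Ta_\o^{\eps_1})^n\jb{D}^{-\frac{(\al-2)n}2}\br v$ and bound by Proposition \ref{prop:bili} with $\br v\in L^2$ and the smooth factor $\dx v^{\eps_2}\in H^{s-2}$ ($s-2>\frac12$); since \eqref{bilinn2} of Corollary \ref{cor:bili3} makes $F_\o$ Lipschitz in the data, $\|\Ta_\o^{\eps_1}-\Ta_\o^{\eps_2}\|_{L^2}\le C(\|\phi\|_{H^s})\|\br u\|_{H^1}$, and \eqref{diff2a} gives $\|\br u\|_{H^1}\les\br E$. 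Thus these four terms contribute $C(\|\phi\|_{H^s})\br E\,\|\br v\|_{L^2}$.

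I expect the diffusion source $(\eps_1-\eps_2)\dx^2 v^{\eps_2}$ to be the main obstacle, because it carries two derivatives on $v^{\eps_2}$ against an $L^2$-level $\br v$: a direct pairing would require $v^{\eps_2}\in H^{2-\frac{(\al-2)n}2}$, i.e.\ $s\ge 3-\frac{(\al-2)n}2$, which fails for small $n$ when $\al\in(2,3)$. I would therefore integrate by parts once. The term where the derivative lands on $(\dx^{-1}\Im\Ta_\o^{\eps_1})^n$ is immediately $\le C(\|\phi\|_{H^s})(\eps_1-\eps_2)\|\br v\|_{L^2}$, using $\|v^{\eps_2}\|_{H^1}\le C(\|\phi\|_{H^s})$ and the negative-order weight; the term where it lands on $\br v$ yields a factor $\jb{D}^{-\frac{(\al-2)n}2}\dx\br v$ that I would absorb into the parabolic damping $\frac{\eps_1}{2N}\|\dx\br v\|_{L^2}^2$ by Young's inequality, keeping the constant uniform in $\eps_1,\eps_2$ through $\frac{(\eps_1-\eps_2)^2}{\eps_1}\le\eps_1-\eps_2$. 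Collecting $\br R_1$, the inherited terms, the nonlinearity differences and the diffusion source, and using \eqref{diff2a} to replace $\|\br v\|_{L^2}^2$ by $\br E\,\|\br v\|_{L^2}$, produces the claimed bound.
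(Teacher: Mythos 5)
Your proposal is correct and follows essentially the same route as the paper's proof: the same splitting $\dt \br L_n = \br J + \br R_1$, the same verbatim re-use of the Lemma \ref{lem:modi} estimates at $r=1$ (negative-order weights handled by Proposition \ref{prop:commne}, the dispersive piece combined with $\br K_n$ via Proposition \ref{prop:comm3}, the resonant piece producing $\br K_{n+1}$), the same Lipschitz bound $\|\Ta_\o^{\eps_1}-\Ta_\o^{\eps_2}\|_{L^2}\le C(\|\phi\|_{H^s})\|\br u\|_{H^1}$ for the difference terms, and the same single integration by parts for the diffusion source $(\eps_1-\eps_2)\dx^2 v^{\eps_2}$. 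The only deviation is in the sub-term where the derivative lands on $\br v$: the paper bounds it crudely by $C(\|\phi\|_{H^s})(\eps_1-\eps_2)\|\br v\|_{H^1}\|v^{\eps_2}\|_{H^1}\le C(\|\phi\|_{H^s})(\eps_1-\eps_2)$ using the a priori bounds on both solutions, whereas you absorb it into the $\eps_1$-damping budget via Young's inequality and $(\eps_1-\eps_2)^2/\eps_1\le \eps_1-\eps_2$; both are valid (and both, like the paper's own proof, produce a standalone $C(\eps_1-\eps_2)$ term, which is what is actually used in Proposition \ref{prop:diff1}).
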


\begin{proof}
For simplicity, we suppress the time dependence in this proof.
Fix $n =1,\dots,N$.
It follows from \eqref{diffL} that
\begin{equation}
\begin{aligned}
\dt \br L_n
&=
2 c_n \Re \int_\T ( \dx^{-1} \Im \Ta_\o^{\eps_1} )^n
\big( \jb{D}^{-\frac {(\al-2)n}2} \br v \big) \jb{D}^{-\frac {(\al-2)n}2} \cj{\dt \br v}dx
\\
&\quad
+
c_n \int_\T \dt ( ( \dx^{-1} \Im \Ta_\o^{\eps_1} )^n )
\big| \jb{D}^{-\frac {(\al-2)n}2} \br v \big|^2dx
\\
&=:
\br J + \br R_1.
\end{aligned}
\label{dtbrL}
\end{equation}
As in \eqref{remi1a}, with Corollary \ref{cor:enebdur}, we have
\begin{equation}
|\br R_1| \le C(\| \phi \|_{H^s}) \|\br v\|_{L^2}^2
\label{brremi1a}
\end{equation}
for $s > \regwp$.

From \eqref{brv}, we have
\begin{align}
\br J
&=
-2 c_n \Im \int_\T \big( \dx^{-1}  \Im \Ta_\o^{\eps_1} \big)^n
\big( \jb{D}^{-\frac {(\al-2)n}2} \br v \big) \jb{D}^{-\frac {(\al-2)n}2} D^\al \cj{\br v}dx
\notag
\\
&\quad
+
2 c_n \eps_1 \Re \int_\T
\big( \dx^{-1} \Im \Ta_\o^{\eps_1} \big)^n
\big( \jb{D}^{-\frac {(\al-2)n}2} \br v \big) \jb{D}^{-\frac {(\al-2)n}2} \cj{\dx^2 \br v}dx
\notag
\\
&\quad
+
2 c_n (\eps_1 - \eps_2)
\Re \int_\T
\big( \dx^{-1}  \Im \Ta_\o^{\eps_1} \big)^n
\big( \jb{D}^{-\frac {(\al-2)n}2} \br v \big)
\notag
\\
&\hspace*{100pt}
\times
\jb{D}^{-\frac {(\al-2)n}2} \cj{\dx^2 v^{\eps_2}} dx
\notag
\\
&\quad
+
2 c_n \Im \int_\T
\big( \dx^{-1}  \Im \Ta_\o^{\eps_1} \big)^n
(P_{\neq 0} \Im \Ta_\o^{\eps_1}) \big( \jb{D}^{-\frac {(\al-2)n}2} \br v \big)
\notag
\\
&\hspace*{60pt}
\times
\jb{D}^{-\frac {(\al-2)n}2} \cj{\dx \br v}dx
\notag
\\
&\quad
+
2 c_n \Re \int_\T (\dx^{-1} \Im \Ta_\o^{\eps_1})^n (\Re \Ta_\o^{\eps_1}) 
(\jb{D}^{-\frac {(\al-2)n}2} \br v)
\notag
\\
&\hspace*{60pt}
\times
\jb{D}^{-\frac{(\al-2)n}2} \cj{\dx \br v}dx
\notag
\\
&\quad
+
2 c_n \Re \int_\T (\dx^{-1}  \Im \Ta_\o^{\eps_1})^n \Ta_{\cj \o}^{\eps_1} 
(\jb{D}^{-\frac {(\al-2)n}2} \br v)
\notag
\\
&\hspace*{60pt}
\times
\jb{D}^{-\frac {(\al-2)n}2} \dx \br v dx
\notag
\\
&\quad
+
2 c_n \Im \int_\T
\big( \dx^{-1}  \Im \Ta_\o^{\eps_1} \big)^n
\big( \jb{D}^{-\frac {(\al-2)n}2} \br v \big)
\notag
\\
&\hspace*{60pt} 
\times
\jb{D}^{-\frac {(\al-2)n}2} \big( (P_{\neq 0} \Im \Ta_\o^{\eps_1} - P_{\neq 0} \Im \Ta_\o^{\eps_2})
\cj{\dx v^{\eps_2}} \big)dx
\notag
\\
&\quad
+
2 c_n \Re \int_\T (\dx^{-1}  \Im \Ta_\o^{\eps_1})^n 
\big( \jb{D}^{-\frac {(\al-2)n}2} \br v \big)
\notag
\\
&\hspace*{60pt} 
\times
\jb{D}^{-\frac {(\al-2)n}2} \big ((\Re \Ta_\o^{\eps_1} - \Re \Ta_\o^{\eps_2})
\cj{\dx v^{\eps_2}} \big)dx
\notag
\\
&\quad
+
2 c_n \Re \int_\T (\dx^{-1}  \Im \Ta_\o^{\eps_1})^n
\big( \jb{D}^{-\frac {(\al-2)n}2} \br v \big)
\notag
\\
&\hspace*{60pt}
\times
\jb{D}^{-\frac {(\al-2)n}2} \big ((\cj{\Ta_{\cj \o}^{\eps_1}} - \cj {\Ta_{\cj \o}^{\eps_2}})
\dx v^{\eps_2} \big)dx
\notag
\\
&\quad
+
\br R_2
\notag
\\
&=:
\sum_{\l=1}^9 \br J_{\l} + \br R_2,
\label{brJ1}
\end{align}
where
\begin{equation*}
\begin{aligned}
\br R_2
&=
2c_n \Im \int_\T (\dx^{-1}  \Im \Ta_\o^{\eps_1})^n
\big( \jb{D}^{-\frac {(\al-2)n}2} \br v \big)
\big( \big[ \jb{D}^{-\frac {(\al-2)n}2},  \Im \Ta_\o^{\eps_1} \big] \cj{\dx \br v} \big) dx
\\
&\quad
+
2c_n \Re \int_\T (\dx^{-1}  \Im \Ta_\o^{\eps_1})^n
\big( \jb{D}^{-\frac {(\al-2)n}2} \br v \big)
\big( \big[ \jb{D}^{-\frac {(\al-2)n}2}, \Re \Ta_\o^{\eps_1} \big] \cj{\dx \br v} \big)dx
\\
&\quad
+
2c_n \Re \int_\T (\dx^{-1}  \Im \Ta_\o^{\eps_1})^n
\big( \jb{D}^{-\frac {(\al-2)n}2} \br v \big)
\big( \big[ \jb{D}^{-\frac {(\al-2)n}2}, \cj{\Ta_{\cj \o}^{\eps_1}} \big] \dx \br v \big) dx
\\
&\quad
+
2c_n \Re \int_\T (\dx^{-1}  \Im \Ta_\o^{\eps_1})^n (\jb{D}^{-\frac {(\al-2)n}2} \br v) \jb{D}^{-\frac {(\al-2)n}2} \cj{\br R} dx
\\
&=:
\br R_{2,1}+ \br R_{2,2} + \br R_{2,3} + \br R_{2,4}.
\end{aligned}
\end{equation*}

We consider $\br R_2$.
Proposition \ref{prop:commne} and Corollary \ref{cor:enebdur} imply that
\begin{equation}
\begin{aligned}
|\br R_{2,1}|
&\les
\| \dx^{-1} \Im \Ta_\o^{\eps_1} \|_{L^\infty}^n 
\big\| \big[ \jb{D}^{-\frac {(\al-2)n}2}, \Im \Ta_\o^{\eps_1} \big] \dx \br v \big\|_{L^2}
\| \br v\|_{L^2}
\\
&\le
C(\| \phi \|_{H^s}) \|\br v\|_{L^2}^2
\end{aligned}
\label{diremi21}
\end{equation}
for $s>\frac 52$.
The same calculation as in \eqref{diremi21} yields that
\begin{equation}
|\br R_{2,2}| + |\br R_{2,3}|
\le C(\| \phi \|_{H^s}) \|\br v\|_{L^2}^2.
\label{diremi2223}
\end{equation}
Moreover,
by \eqref{brR}, we have
\begin{equation}
|\br R_{2,4}| 
\le 
\| \dx^{-1} \Im \Ta_\o^{\eps_1} \|_{L^\infty}^n \|\br v\|_{L^2} \|\br R\|_{L^2}
\le
C(\| \phi \|_{H^s}) \|\br u\|_{H^1} \|\br v\|_{L^2}.
\label{diremi24}
\end{equation}
It follows from \eqref{diremi21}--\eqref{diremi24} that
\begin{equation}
|\br R_2| \le C(\| \phi \|_{H^s}) \|\br u\|_{H^1} \|\br v\|_{L^2}
\label{diremi2}
\end{equation}
for $s >\frac 52$.

For $\br J_\l$
with $\l \in \{1,2,4,5,6\}$,
the following estimate can be obtained by a calculation similar to that for \eqref{J1Kn} and \eqref{J2r}--\eqref{J5}, respectively:
\begin{equation}
\begin{aligned}
\br K_n + \sum_{\l \in \{1,2,4,5,6\}} \br J_\l - \br K_{n+1}
\le
C(\| \phi \|_{H^s}) \| \br v \|_{L^2}^2 + \frac{\eps_1}{2N} \| \dx \br v\|_{L^2}^2
\end{aligned}
\label{diJl}
\end{equation}
for $s > \regwp$.
 
We now  focus on estimates for the remaining parts.
For $\br J_3$, from \eqref{brJ1} and integrating by parts, we have
\begin{equation*}
\begin{aligned}
\br J_3
&=
- 2c_n (\eps_1 - \eps_2)
\\
&\quad
\times \bigg( 
\Re \int_\T \dx \big((\dx^{-1} \Im \Ta_\o^{\eps_1})^n \big)
\big( \jb{D}^{-\frac {(\al-2)n}2} \br v \big) \jb{D}^{-\frac {(\al-2)n}2} \cj {\dx v^{\eps_2}} dx
\\
&\quad
+ \Re \int_\T (\dx^{-1} \Im \Ta_\o^{\eps_1})^n \big( \jb{D}^{-\frac {(\al-2)n}2} \dx \br v \big) 
\jb{D}^{-\frac {(\al-2)n}2} \cj {\dx v^{\eps_2}} dx
\bigg)
\\
&=:\br R_3 + \br R_4.
\end{aligned}
\end{equation*}
From $s \ge 2$, we obtain that
\begin{equation}
\begin{aligned}
|\br R_3|
&\les
(\eps_1 - \eps_2)
\| \dx  \big((\dx^{-1} \Im \Ta_\o^{\eps_1})^n \big) \|_{L^\infty}
\\
&\quad
\times
\big\| \jb{D}^{-\frac {(\al-2)n}2} \br v \big\|_{L^2}
\big\| \jb{D}^{-\frac {(\al-2)n}2} \dx v^{\eps_2} \big\|_{L^2} 
\\
&\le
C(\| \phi \|_{H^s}) (\eps_1 - \eps_2) \| \br v\|_{L^2} \| v^{\eps_2}\|_{H^{1}} 
\\
&\le
C(\| \phi \|_{H^s})  (\eps_1 - \eps_2).
\end{aligned}
\label{diremi3}
\end{equation}
Similarly, we have
\begin{equation}
\begin{aligned}
|\br R_4|
\le
C(\| \phi \|_{H^s}) (\eps_1 - \eps_2) \|\br v\|_{H^{1}} \| v^{\eps_2}\|_{H^{1}}
\le
C(\| \phi \|_{H^s})  (\eps_1 - \eps_2).
\end{aligned}
\label{diremi4}
\end{equation}
It follows from \eqref{diremi3} and \eqref{diremi4} that
\begin{equation}
|\br J_3| \le C(\| \phi \|_{H^s})  (\eps_1 - \eps_2)
\label{diJ3}
\end{equation}
for $s \ge 2$.

We estimate $\br J_7$ in \eqref{brJ1}.
Note that
from Proposition \ref{prop:bili} and Corollary \ref{cor:bili3}, we have
\begin{equation}
\begin{aligned}
\|\Ta^{\eps_1} - \Ta^{\eps_2}\|_{L^2} 
&\le 
C(\|\phi\|_{H^s})(\|\br u\|_{L^2} + \|\br v\|_{L^2})
\le 
C(\|\phi\|_{H^s}) \|\br u\|_{H^{1}}
\end{aligned}
\label{difTa}
\end{equation}
for $s>\frac 32$.
With \eqref{brJ1} and \eqref{difTa},
Proposition \ref{prop:bili}  implies that
\begin{equation}
\begin{aligned}
|\br J_7|
&\les
\| \dx^{-1} \Im \Ta_\o^{\eps_1} \|_{L^\infty}^n 
\big\| \jb{D}^{-\frac {(\al-2)n}2} \br v \big\|_{L^2}
\\
&\quad
\times
\big\| \jb{D}^{-\frac {(\al-2)n}2} \big((P_{\neq 0} \Im \Ta_\o^{\eps_1} - P_{\neq 0} \Im \Ta_\o^{\eps_2}) \dx v^{\eps_2} \big) \big\|_{L^2}
\\
&\le
C(\| \phi \|_{H^s}) \| \br v \|_{L^2}
\|\Ta_\o^{\eps_1} - \Ta_\o^{\eps_2}\|_{L^2} \| \dx v^{\eps_2} \|_{H^{s-2}}
\\
&\le
C(\| \phi \|_{H^s}) \| \br v \|_{L^2} \| \br u \|_{H^1}  \| \dx v^{\eps_2} \|_{H^{s-2}}
\\
&\le
C(\| \phi \|_{H^s}) \| \br u \|_{H^1} \| \br v \|_{L^2}
\end{aligned}
\label{diJ7}
\end{equation}
for $s>\frac 52$.

It follows from \eqref{brJ1} and the same calculation as in \eqref{diJ7} that 
\begin{equation}
|\br J_8| + |\br J_9 |
\le
C(\| \phi \|_{H^s}) \| \br u \|_{H^1} \| \br v \|_{L^2}.
\label{diJ89}
\end{equation}
Combining \eqref{dtbrL}--\eqref{brJ1}, \eqref{diremi2}, \eqref{diJl}, \eqref{diJ3}, \eqref{diJ7}, and \eqref{diJ89},
we obtain Lemma \ref{lem:dimodi}.
\end{proof}

We then prove the energy estimate for the difference $\br u$.

\begin{proposition}
\label{prop:diff1}
Let $\al>2$.
Suppose that
\[
s> \regwp,
\]
where $\regwp$ is defined in \eqref{condA}.
Then,
there exists $C_3( \| \phi \|_{H^s})>0$ such that
\[
\sup_{t \in [0,T]} \br E (t)
\le
C_3(\| \phi \|_{H^s})
|\eps_1 - \eps_2|^{\frac 12}
\]
for $\eps_1, \eps_2 \in (0,1)$ with $\eps_1>\eps_2$,
where $T$ is given in \eqref{time1}.
\end{proposition}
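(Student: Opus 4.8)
The plan is to differentiate the squared modified energy $\br E(t)^2$ from \eqref{brEs} and close a Grönwall inequality, crucially exploiting that $\br E(0)=0$ since $\br u(0)=\br v(0)=0$. Throughout I use that, by \eqref{time1} and the discussion preceding Corollary \ref{cor:enebdur}, both $u^{\eps_1}$ and $u^{\eps_2}$ exist on $[0,T]$ with $\|u^{\eps_j}\|_{L_T^\infty H^s}\le C(\|\phi\|_{H^s})$, so every constant below depends only on $\|\phi\|_{H^s}$. The computation mirrors the proof of Lemma \ref{lem:exdtwLn} at the $L^2$ level (i.e.\ $r=1$), the essential new feature being the inhomogeneous forcing generated by the parameter difference $\eps_1-\eps_2$.

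First I would treat the $\br u$-part. Differentiating $b\|\br u\|_{L^2}^2$ and inserting \eqref{bru}, the dispersive term $-iD^\al\br u$ contributes nothing to the real part, while the regularizing term gives $-b\eps_1\|\dx\br u\|_{L^2}^2=-b\eps_1\|\br v\|_{L^2}^2\le 0$ because $\dx\br u=\br v$; this is simply discarded. The remaining two terms are controlled by integration by parts together with $\|\dx u^{\eps_2}\|_{L^2}=\|v^{\eps_2}\|_{L^2}\le C(\|\phi\|_{H^s})$ and by \eqref{difTa}, yielding a bound $C(\|\phi\|_{H^s})\big((\eps_1-\eps_2)\|\br v\|_{L^2}+\|\br u\|_{H^1}\|\br u\|_{L^2}\big)$. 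For the $\br v$-part I would compute $\dt\|\br v\|_{L^2}^2$ from \eqref{brv}: the term $i(P_{\neq 0}\Im\Ta_\o^{\eps_1})\dx\br v$ produces exactly $\br K_1$ (as in \eqref{exI22}, with no commutator remainder since $r=1$), the terms with $\Re\Ta_\o^{\eps_1}$ and $\Ta_{\cj\o}^{\eps_1}$ are bounded by $C\|\br v\|_{L^2}^2$ after integration by parts (needing $s>\tfrac52$), and the three coefficient-difference terms together with $\br R$ are bounded by $C(\|\phi\|_{H^s})\|\br u\|_{H^1}\|\br v\|_{L^2}$ and $C(\|\phi\|_{H^s})(\eps_1-\eps_2)$ via \eqref{difTa} and \eqref{brR}. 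Adding $\sum_{n=1}^N\dt\br L_n$ and invoking the telescoping identity of Lemma \ref{lem:dimodi} cancels $\br K_1$ against the correction terms, leaving only $\br K_{N+1}$, which is estimated exactly as in \eqref{exKN} at $r=1$ by $C(\|\phi\|_{H^s})\|\br v\|_{L^2}^2$; the $\tfrac{\eps_1}{2N}\|\dx\br v\|_{L^2}^2$ errors from the $N$ applications of Lemma \ref{lem:dimodi} sum to $\tfrac{\eps_1}{2}\|\dx\br v\|_{L^2}^2$, which is absorbed by the dissipative term $-2\eps_1\|\dx\br v\|_{L^2}^2$.

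Combining the two parts, discarding the nonpositive contributions, and using the coercivity \eqref{diff2a} to replace $\|\br u\|_{L^2}+\|\br v\|_{L^2}$ by $\br E$, I obtain
\[
\dt\big(\br E(t)^2\big)\le C(\|\phi\|_{H^s})\big(\br E(t)^2+(\eps_1-\eps_2)\big),
\]
where I have used $(\eps_1-\eps_2)\br E\le\tfrac12(\eps_1-\eps_2)^2+\tfrac12\br E^2$ and $(\eps_1-\eps_2)^2\le\eps_1-\eps_2$. Since $\br E(0)=0$, Grönwall's inequality gives $\br E(t)^2\le(\eps_1-\eps_2)\big(e^{C(\|\phi\|_{H^s})T}-1\big)$ for $t\in[0,T]$, which is the claimed estimate after taking square roots. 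The main obstacle is the problematic resonant term $\br K_1$, which cannot be bounded directly; its control is precisely the role of the correction terms $\br L_n$ through the cancellation encoded in Lemma \ref{lem:dimodi}, and the harmlessness of the residual boundary term $\br K_{N+1}$ at the $L^2$ level hinges on $(\al-2)N\ge 1$ from \eqref{conN}.
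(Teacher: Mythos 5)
Your proposal is correct and follows essentially the same argument as the paper's proof: an $L^2$-level (i.e.\ $r=1$) modified-energy computation in which the resonant term equals $\br K_1$ with no commutator remainder, the telescoping cancellation of Lemma \ref{lem:dimodi} removes $\br K_1$ and leaves $\br K_{N+1}$ (bounded as in \eqref{exKN}), the $\tfrac{\eps_1}{2N}\|\dx \br v\|_{L^2}^2$ errors are absorbed into the dissipation, and Gr\"onwall with $\br E(0)=0$ gives the $|\eps_1-\eps_2|^{\frac12}$ rate. The only cosmetic deviations are your integration by parts on the $(\eps_1-\eps_2)\dx^2 u^{\eps_2}$ forcing term (the paper bounds it by $|\eps_1-\eps_2|\,\|u^{\eps_2}\|_{H^{2}}\|\br u\|_{L^2}$ instead) and the explicit Young-inequality step $(\eps_1-\eps_2)\br E \le \tfrac12(\eps_1-\eps_2)^2+\tfrac12\br E^2$ at the end.
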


\begin{proof}
We prove that there exists $\br C(\|\phi\|_{H^s})>0$ such that
\begin{equation}
\dt  (\br E (t)^2)
\le 
\br C(\|\phi\|_{H^s}) \big( | \eps_1 - \eps_2 | + \br E(t)^2 \big)
\label{diineq}
\end{equation}
for $t \in (0,T)$ and $1 > \eps_1 > \eps_2 > 0$. Then, by solving this diffrential inequality and $\br E(0) = 0$,
we then obtain the desired bound.

In what follows,
we suppress the time dependence for short.
By \eqref{bru}, we have
\begin{equation*}
\begin{aligned}
\frac12 \dt ( \|\br u\|_{L^2}^2 )
&=
-\eps_1 \|\dx u^{\eps_1}\|_{L^2}^2 
+ (\eps_1 - \eps_2) \Re (\dx^2 u^{\eps_2}, \br u)_{L^2} 
\\
&\quad
+\Re ( \Ta^{\eps_1} - \Ta^{\eps_2}, \br u)_{L^2}
\\
&\le
|\eps_1 - \eps_2| \|u^{\eps_2}\|_{H^{2}} \|\br u\|_{L^2}
+
\|\Ta^{\eps_1} - \Ta^{\eps_2}\|_{L^2} \|\br u\|_{L^2}.
\end{aligned}
\end{equation*}
With \eqref{difTa}, we get
\begin{equation}
\dt  (\|\br u\|_{L^2}^2)
\le
C(\|\phi\|_{H^s}) \big(|\eps_1 - \eps_2| + \|\br u\|_{H^{1}} \|\br u\|_{L^2} \big)
\label{difu}
\end{equation}
for $s \ge 2$.

It follows from \eqref{brv} that
\begin{align}
\frac12 \dt \big( \| \br v \|_{L^2}^2 \big)
&=
- \eps_1 \| \dx \br v \|_{L^2}^2
\notag
\\
&\quad
-(\eps_1-\eps_2) \Re ( \dx v^{\eps_2}, \dx \br v)_{L^2}
\notag
\\
&\quad
+
\Re \big( i (P_{\neq 0} \Im \Ta_{\o}^{\eps_1}) \dx \br v, \br v \big)_{L^2}
\notag
\\
&\quad
+\Re \big( (\Re \Ta_{\o}^{\eps_1}) \dx \br v, \br v \big)_{L^2}
\notag
\\
&\quad
+ 
\Re \big( \Ta_{\cj \o}^{\eps_1} \cj {\dx \br v}, \br v \big)_{L^2}
\notag
\\
&\quad
+
\Re \big( i (P_{\neq 0} \Im \Ta_{\o}^{\eps_1}- P_{\neq 0} \Im \Ta_{\o}^{\eps_2}) \dx v^{\eps_2} , \br v \big)_{L^2}
\notag
\\
&\quad
+
\Re \big( (\Re \Ta_{\o}^{\eps_1}- \Re \Ta_{\o}^{\eps_2}) \dx v^{\eps_2} ,  \br v \big)_{L^2}
\notag
\\
&\quad
+
\Re \big( (\Ta_{\cj \o}^{\eps_1}- \Ta_{\cj \o}^{\eps_2}) \cj{\dx v^{\eps_2}}, \br v \big)_{L^2}
\notag
\\
&\quad
+
\Re ( \br R , \br v )_{L^2}
\notag
\\
&=:
-\eps_1 \| \dx \br v \|_{L^2}^2
+
\sum_{j=1}^8 \I_j .
\label{difv}
\end{align}

From Corollary \ref{cor:enebdur}, we have
\begin{equation}
\begin{aligned}
|\I_1|
\le
|\eps_1 - \eps_2| \| \dx v^{\eps_2} \|_{L^2} \| \dx \br v \|_{L^2}
\le C(\|\phi\|_{H^s})|\eps_1 - \eps_2|
\label{difI1}
\end{aligned}
\end{equation}
for $s \ge 2$.

For $\I_2$,
it follows from \eqref{diffK} that
\begin{equation}
\I_{2} = \frac 12 \br K_{1}.
\label{difI22}
\end{equation}
For $\I_{3}$,
Corollary \ref{cor:enebdur} yields that
\begin{equation}
\begin{aligned}
\I_{3}
&= 
\Re \int_\T (\Re \Ta_\o^{\eps_1})  ( \dx \br v) \cj{ \br v}dx
=
-\frac12 \int_\T \dx (\Re \Ta_\o^{\eps_1}) \cdot | \br v|^2 dx
\\
&\les
\|\dx \Ta_\o^{\eps_1}\|_{L^\infty} \| \br v\|_{L^2}^2
\le
C(\|\phi\|_{H^s}) \|\br v\|_{L^2}^2
\end{aligned}
\label{difI32}
\end{equation}
for $s>\frac 52$.
The same calculation as in \eqref{difI32} yields that
\begin{equation}
|\I_4| \le C(\|\phi\|_{H^s}) \|\br v\|_{L^2}^2.
\label{difI4}
\end{equation}

From Proposition \ref{prop:bili} and \eqref{difTa}, we have
\begin{equation}
\begin{aligned}
|\I_5| 
&\les
\| (P_{\neq 0} \Im \Ta_{\o}^{\eps_1}- P_{\neq 0} \Im \Ta_{\o}^{\eps_2}) \dx v^{\eps_2} \|_{L^2}
\|\br v\|_{L^2}
\\
&\les
\|\Ta_{\o}^{\eps_1}- \Ta_{\o}^{\eps_2}\|_{L^2} \|\dx v^{\eps_2} \|_{H^{s-2}} \|\br v\|_{L^2}
\\
&\le
C(\|\phi\|_{H^s}) \|\br u \|_{H^{1}} \|\br v\|_{L^2}
\end{aligned}
\label{difI5}
\end{equation}
for $s>\frac 52$.
The same calculation as in \eqref{difI5} impiles that
\begin{equation}
|\I_6| + |\I_7|
\le
C(\|\phi\|_{H^s}) \|\br u \|_{H^{1}} \|\br v\|_{L^2}.
\label{difI6I7}
\end{equation}
From \eqref{brR} and \eqref{difTa}, we have
\begin{equation}
|\I_8|
\le
C(\|\phi\|_{H^s}) \|\br u \|_{H^{1}} \|\br v\|_{L^2}.
\label{difI8}
\end{equation}

Collecting \eqref{difv}--\eqref{difI8} and Lemma \ref{lem:dimodi}, we obtain that
\begin{equation}
\begin{aligned}
\dt  \Big(\| \br v\|_{L^2}^2 + \sum_{n=1}^N \br L_n \Big)
&\le
-2 \eps_1 \| \dx \br v \|_{L^2}^2
+
2 \sum_{j=1}^8 \I_j - \br K_1
\\
&\quad 
+ \sum_{n=1}^N \Big( \br K_n + \dt  \br L_n - \br K_{n+1} \Big) + \br K_{N+1}
\\
&\le
- \eps_1  \| \dx \br v \|_{L^2}^2
\\
&\quad
+ C(\|\phi\|_{H^s}) \big( |\eps_1 -\eps_2| + \|\br u \|_{H^{1}} \|\br v\|_{L^2} \big)
\\
&\quad
+ \br K_{N+1}.
\end{aligned}
\label{difvLn}
\end{equation}  

From
\eqref{diffK} and 
Corollary \ref{cor:enebdur},
the same calculation as in \eqref{exKN} yields that
\begin{equation}
\begin{aligned}
|\br K_{N+1}|
\le
C(\|\phi\|_{H^s}) \|\br v\|_{L^2}^2.
\end{aligned}
\label{difKN}
\end{equation}
Combining \eqref{difvLn}, \eqref{difKN}, and \eqref{diff2a},
we obtain that
\begin{equation}
\begin{aligned}
\dt  \Big(\| \br v\|_{L^2}^2 + \sum_{n=1}^N \br L_n \Big)
\le
C(\|\phi\|_{H^s})
\big( |\eps_1 - \eps_2|  + \br E^2 \big).
\end{aligned}
\label{difene}
\end{equation}
By \eqref{brEs}, \eqref{diff2a}, \eqref{difu}, and \eqref{difene}, we get \eqref{diineq}.
\end{proof}

From
Proposition \ref{prop:diff1} with \eqref{diff2a},
we have
\[
u = \lim_{\eps \to +0} u^\eps \in C([0,T]; H^{1}(\T)),
\]
where $u$ is the solution to \eqref{fNLS} constructed at the end of Subsection \ref{SUBSEC:energy}.
Moreover,
we also have
\[
u \in C ([0,T]; H^{s-\dl}(\T))
\]
for any $\dl>0$.
Indeed, for $\dl \in (0,s-1) $ and $t_1,t_2 \in [0,T]$,
 an interpolation yields that
\[
\| u(t_1) - u(t_2) \|_{H^{s-\dl}}
\le
\| u(t_1) - u(t_2) \|_{H^1}^{\frac \dl{s-1}} \| u(t_1) - u(t_2) \|_{H^{s}}^{1-\frac \dl{s-1}}.
\]
Since
\[
u \in C([0,T]; H^{1}(\T)) \cap L^\infty([0,T]; H^s(\T)),
\]
we obtain that $u \in C ([0,T]; H^{s-\dl}(\T))$ for $\dl>0$.

\begin{remark}
\label{rem:pers}
\rm
If $\phi \in H^r(\T)$ for some $r \ge s$,
the solution $u$ to \eqref{fNLS} constructed above satisfies
\[
u \in L^\infty([0,T]; H^r(\T)).
\]
Moreover,
we have the bound
\begin{equation}
\| u \|_{L_T^\infty H^r} \le C_2 ( \| \phi \|_{H^s}) (1+\| \phi \|_{H^r}),
\label{est:solbd1}
\end{equation}
where $C_2$ is the continuous function appeared in Corollary \ref{cor:enebdur}.
In addition,
the interpolation argument as above yields that
\[
u \in C ([0,T]; H^{r-\dl}(\T))
\]
for any $\dl>0$.
Note that $r=s+\frac \al2 + 1$ suffices for the argument in Subsection \ref{Subsec:code} below.
\end{remark}

A slight modification of the proof of Proposition \ref{prop:diff1} yields the following:

\begin{corollary}
\label{cor:diff2}
Let $\al>2$ and
\[
s>
\regwp,
\]
where $\regwp$ is defined in \eqref{condA}.
Let $u^{(j)} \in C([0,T]; H^s(\T))$ be a solution to \eqref{fNLS} in $H^s(\T)$ on $[0,T]$ with the initial data $\phi^{(j)} \in H^s(\T)$ for $j=1,2$.
Then,
there exists
\[
C_4( \| u^{(1)} \|_{L_T^\infty H^s}, \| u^{(2)} \|_{L_T^\infty H^s} )>0
\]
such that
\[
\| u^{(1)} - u^{(2)} \|_{L_T^\infty H^{1}}
\le C_4( \| u^{(1)} \|_{L_T^\infty H^s}, \| u^{(2)} \|_{L_T^\infty H^s} ) \| \phi^{(1)} - \phi^{(2)} \|_{H^{1}}.
\]
\end{corollary}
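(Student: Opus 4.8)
The plan is to rerun the argument of Proposition \ref{prop:diff1} with the two parabolic approximations $u^{\eps_1},u^{\eps_2}$ replaced by the genuine solutions $u^{(1)},u^{(2)}$ and with the regularization switched off, i.e.\ formally $\eps_1=\eps_2=0$. Writing $\br u:=u^{(1)}-u^{(2)}$, $\br v:=\dx\br u$, and $\Ta^{(j)}:=F(u^{(j)},\dx u^{(j)},\cj{u^{(j)}},\cj{\dx u^{(j)}})$, the difference equations are exactly \eqref{bru}--\eqref{brR} but with every occurrence of $\eps_1\dx^2$, $\eps_2\dx^2$, and $(\eps_1-\eps_2)\dx^2$ deleted and with $\Ta^{\eps_j},\Ta_\o^{\eps_j},\dots$ replaced by $\Ta^{(j)},\Ta_\o^{(j)},\dots$. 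I would then define the difference energy exactly as in \eqref{diffL}--\eqref{brEs}, using $\dx^{-1}\Im\Ta_\o^{(1)}$ in the correction terms $\br L_n,\br K_n$; the coercivity \eqref{diff2a}, namely $\br E(t)^2\sim\|\br u(t)\|_{L^2}^2+\|\br v(t)\|_{L^2}^2\sim\|\br u(t)\|_{H^1}^2$, holds verbatim, with the constants $b_n$ now depending on $\|u^{(1)}\|_{L_T^\infty H^s}$ instead of being supplied by Corollary \ref{cor:enebdur}.

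The second step is the differential inequality. I would reprove Lemma \ref{lem:dimodi} (the cancellation $\br K_n-\br K_{n+1}$ through $\dt\br L_n$) and then follow the computation of Proposition \ref{prop:diff1} line by line. The crucial simplification is that with $\eps_1=\eps_2=0$ every term carrying the factor $|\eps_1-\eps_2|$ disappears: the term $\br J_3$ in \eqref{brJ1} and the term $\I_1$ in \eqref{difv} are no longer present, and there is no parabolic dissipation to track. All remaining contributions are controlled, through Propositions \ref{prop:commC}--\ref{prop:comm3}, Corollary \ref{cor:bili3}, and the difference bound \eqref{difTa}, by $C(\|u^{(1)}\|_{L_T^\infty H^s},\|u^{(2)}\|_{L_T^\infty H^s})\,\br E^2$. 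Thus the analogue of \eqref{diineq} becomes $\dt(\br E^2)\le \br C\,\br E^2$, and Gr\"onwall on the fixed interval $[0,T]$ together with $\br E(0)^2\sim\|\phi^{(1)}-\phi^{(2)}\|_{H^1}^2$ and the coercivity \eqref{diff2a} yields $\|\br u\|_{L_T^\infty H^1}\le C_4\,\|\phi^{(1)}-\phi^{(2)}\|_{H^1}$, with $C_4$ depending only on the two $H^s$ norms (and on $T$, which is fixed).

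The main obstacle is rigor rather than bookkeeping. In Proposition \ref{prop:diff1} the approximations were smooth on $(0,T_\eps]$, so the time differentiation and the integrations by parts defining $\br L_n,\br K_n$ were legitimate; here $u^{(1)},u^{(2)}$ are only assumed in $C([0,T];H^s)$, and by Remark \ref{rem:sold} one merely has $\dt\br v\in C([0,T];H^{s-\al-1})$, which in general does not embed into $L^2$. Hence the identity $\tfrac12\dt\|\br v\|_{L^2}^2=\Re(\dt\br v,\br v)_{L^2}$ and the cancellations of the type \eqref{exI1} are only formal. I would resolve this in the standard way, inserting a frequency projection $P_{\le M}$, running the whole computation for $P_{\le M}\br u$ (for which $\dt P_{\le M}\br v\in L^2$ and every integration by parts is justified), and letting $M\to\infty$: the commutator estimates of Section \ref{SEC:Bili} bound the truncation errors uniformly in $M$, the skew-adjointness of $D^\al$ still annihilates the leading term as in \eqref{exI1}, and the integrated inequality passes to the limit since $\br v\in C([0,T];L^2)$ and the correction functionals are continuous in $\br v$. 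Throughout one checks that each bilinear or commutator factor is estimated purely in terms of $\|u^{(1)}\|_{L_T^\infty H^s}$ and $\|u^{(2)}\|_{L_T^\infty H^s}$, so that the resulting constant has the asserted dependence.
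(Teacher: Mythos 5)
Your proposal is correct in substance, and its core — rerunning the energy computation of Proposition \ref{prop:diff1} with $u^{(1)},u^{(2)}$ in place of $u^{\eps_1},u^{\eps_2}$ and all $\eps$-terms deleted, then using coercivity \eqref{diff2a} and Gr\"onwall — is exactly what the paper does. Where you diverge is in the one genuinely nontrivial point, the justification of $\tfrac12\dt\|\br v\|_{L^2}^2=\Re(\dt \br v,\br v)_{L^2}$ and of the ensuing cancellations for solutions that are merely $C([0,T];H^s)$. You propose a frequency truncation $P_{\le M}$ plus a limiting argument; the paper instead observes that no regularization is needed at all: by Remark \ref{rem:sold}, $v^{(j)}\in C([0,T];H^{s-1})\cap C^1([0,T];H^{s-\al-1})$, and since $s>\regwp\ge\frac\al2+1$ one has $s-\al-1>-\frac\al2$ and $s-1>\frac\al2$, so every pairing can be read as the duality pairing
\[
\dt\big(\|v^{(j)}(t)\|_{L^2}^2\big)=2\Re\big(\jb{D}^{-\frac\al2}\dt v^{(j)}(t),\,\jb{D}^{\frac\al2}v^{(j)}(t)\big)_{L^2},
\]
and likewise the skew-adjointness cancellation $\Im(D^\al\br v,\br v)_{L^2}=\Im\|D^{\al/2}\br v\|_{L^2}^2=0$ is legitimate because $D^{\al/2}\br v\in H^{s-1-\al/2}\subset L^2$. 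This is why the threshold $\regwp$ is exactly what makes the corollary's proof a one-line remark. Your route is workable, but note one loose end: the commutator estimates of Section \ref{SEC:Bili} are stated for $\jb{D}^s$, not for the sharp cutoff $P_{\le M}$, so the claim that they control the truncation errors uniformly in $M$ is not literally available; you would have to prove analogous uniform bounds for $[P_{\le M},f]\dx g$ (standard, but an extra lemma). In short: same energy scheme, but the paper's duality observation buys you rigor for free, while your mollification buys generality (it would also work at regularities where the duality pairing fails) at the cost of additional commutator machinery.
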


\begin{proof}
It follows from $u^{(j)} \in C([0,T]; H^s(\T))$ and Remark \ref{rem:sold} that
\[
v^{(j)} := \dx u^{(j)} \in C ([0,T]; H^{s-1}(\T)) \cap C^1([0,T]; H^{s-\al-1}(\T)).
\]
It follows from $s>\regwp$ that
\[
s-\al -1> -\frac \al2.
\]
Hence,
we have
\begin{align*}
\dt \big( \| v^{(j)} (t) \|_{L^2}^2 \big)
&= 2 \Re \big( \jb{D}^{-\frac \al2} \dt v^{(j)} (t), \jb{D}^{\frac \al 2} v^{(j)} (t) \big)_{L^2}
\end{align*}
for $t \in (0,T)$.
Hence, we can apply the same argument as in the proof of Proposition \ref{prop:diff1}
replacing $u^{\eps_1}$ and $u^{\eps_2}$ with $u^{(1)}$ and $u^{(2)}$, respectively.
\end{proof}

Corollary \ref{cor:diff2} yields the uniqueness of the solution to \eqref{fNLS}.


\subsection{Continuous dependence on initial data}
\label{Subsec:code}

To prove well-posedness,
we use the Bona-Smith approximation as in \cite{BoSm75}.
Let $s> \regwp$ and $\phi \in H^s(\T)$.
For $\mu \in \N$, we set 
\begin{equation}
\phi_\mu (x) := \sum_{\substack{k \in \Z \\ |k| \le \mu}} \ft \phi(k) e^{ikx}.
\label{BS_ini1}
\end{equation}
A simple calculation yields the following:

\begin{lemma}
\label{lem:BoSm}
The truncated function $\phi_\mu$ defined in \eqref{BS_ini1} satisfies the followings:
\begin{enumerate}
\item
$\phi_\mu \in C^{\infty}(\T)$ for any $\mu \in \N$.

\item
$\lim_{\mu \to \infty} \|\phi_\mu - \phi\|_{H^s} =0$.

\item
For $r>s$, we have $\| \phi_\mu \|_{H^r} \les \mu^{r-s} \| \phi \|_{H^s}$.

\item
For $r<s$, we have
$\|\phi_\mu - \phi \|_{H^r} \les \mu^{-(s-r)} \|\phi\|_{H^s}$.
\end{enumerate}
\end{lemma}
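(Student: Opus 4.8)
The plan is to observe that $\phi_\mu$ is nothing but the Fourier truncation of $\phi$, i.e.\ $\ft{\phi_\mu}(k) = \ft\phi(k)$ for $|k| \le \mu$ and $\ft{\phi_\mu}(k) = 0$ otherwise, so that all four assertions reduce to elementary manipulations of the weighted norm $\| f \|_{H^s} = \| \ft f \|_{\l^2_s}$ recalled in the notation section. For (i), I would simply note that $\phi_\mu$ is a finite trigonometric sum, hence a $C^\infty(\T)$ function regardless of the regularity of $\phi$.

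For (ii), I would write
\[
\|\phi_\mu - \phi\|_{H^s}^2 = \sum_{|k| > \mu} \jb{k}^{2s} |\ft\phi(k)|^2,
\]
which is the tail of the convergent series $\sum_{k \in \Z} \jb{k}^{2s} |\ft\phi(k)|^2 = \|\phi\|_{H^s}^2 < \infty$, and therefore tends to $0$ as $\mu \to \infty$.

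For (iii) and (iv) the key step is to trade powers of $\jb{k}$ against powers of $\mu$ on the relevant frequency range, using the monotonicity of $\jb{\cdot}$. For (iii), on $|k| \le \mu$ one has $\jb{k}^{2(r-s)} \le \jb{\mu}^{2(r-s)} \les \mu^{2(r-s)}$ since $r > s$ and $\mu \ge 1$, so
\[
\|\phi_\mu\|_{H^r}^2 = \sum_{|k| \le \mu} \jb{k}^{2(r-s)} \jb{k}^{2s} |\ft\phi(k)|^2 \les \mu^{2(r-s)} \|\phi\|_{H^s}^2.
\]
For (iv), on $|k| > \mu$ one has $\jb{k}^{-2(s-r)} \le \jb{\mu}^{-2(s-r)} \le \mu^{-2(s-r)}$ since $s > r$, so
\[
\|\phi_\mu - \phi\|_{H^r}^2 = \sum_{|k| > \mu} \jb{k}^{-2(s-r)} \jb{k}^{2s} |\ft\phi(k)|^2 \les \mu^{-2(s-r)} \|\phi\|_{H^s}^2.
\]
Taking square roots in each case yields the stated bounds.

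There is essentially no obstacle here: the entire lemma is a direct consequence of the definition of $\phi_\mu$ and of the Sobolev norm, which is precisely why the paper calls it a simple calculation. The only mild point worth recording is that the implicit constants in (iii) and (iv) depend only on $r$ and $s$ (through $\jb{\mu} \les \mu$ for $\mu \ge 1$ and $\jb{\mu} \ges \mu$), and are in particular independent of $\mu$ and $\phi$, so the claimed $\mu$-dependence is the expected one.
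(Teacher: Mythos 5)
Your proof is correct and is precisely the "simple calculation" the paper invokes without writing out: the paper states Lemma \ref{lem:BoSm} with no proof, and your argument (smoothness of a finite trigonometric sum, tail of a convergent series, and trading $\jb{k}^{\pm(r-s)}$ against $\jb{\mu}^{\pm(r-s)}\les\mu^{\pm(r-s)}$ on the relevant frequency ranges) is the standard and intended one. Nothing to add.
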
 

Let $\mu \in \N$ and
let $u_\mu$ be the the solution to \eqref{fNLS} with the initial data
$\phi_\mu$.
Since $\phi_\mu \in C^\infty(\T)$,
Remark \ref{rem:pers} yields that
(at least)
\[
u_\mu \in C([0,T]; H^{s+\frac \al2}(\T)) \cap C^1([0,T]; H^{s- \frac \al2}(\T)).
\]
Set
\[
v_\mu = \dx u_\mu.
\]
For simplicity, we write
\begin{align*}
\Ta^{\mu}
= \Ta(u_\mu, v_\mu, \cj {u_\mu}, \cj {v_\mu}),
\quad
\Ta_{\zeta}^{\mu}
= \Ta_{\zeta} (u_\mu, v_\mu, \cj {u_\mu}, \cj {v_\mu}),
\quad
\cdots
\end{align*}

Similarly as in Subsection \ref{Subsec:estdiff},
for $\mu, \nu \in \N$ with $\mu>\nu$,
we define
\[
\wt u := u_{\mu} - u_{\nu},
\qquad
\wt v := \dx \wt u,
\]
for short.
Then, $\wt u$ satisfies
\begin{align}
\dt \wt u + i D^\al \wt u
&= \Ta^{\mu} - \Ta^{\nu}.
\label{wtu}
\end{align}
Moreover, it follows from the same calculation as in \eqref{brv} that
\begin{equation}
\begin{aligned}
\dt \wt v + i D^\al \wt v
&=
i (P_{\neq 0} \Im \Ta_{\o}^{\mu}) \dx \wt v
+(\Re \Ta_\o^{\mu})\dx \wt v
+\Ta_{\cj \o}^{\mu} \cj{\dx \wt v}
\\
&\quad
+i (P_{\neq 0} \Im \Ta_{\o}^{\mu} - P_{\neq 0} \Im \Ta_{\o}^{\nu}) \dx v_{\nu}
\\
&\quad
+ (\Re \Ta_{\o}^{\mu} - \Re \Ta_{\o}^{\nu}) \dx v_{\nu}
\\
&\quad
+ (\Ta_{\cj \o}^{\mu} - \Ta_{\cj \o}^{\nu}) \cj {\dx v_{\nu}}
+ \wt R,
\end{aligned}
\label{wtv}
\end{equation}
where 
\begin{equation}
\wt R
:=
\Ta_{\zeta}^{\mu} \wt v
+ \Ta_{\cj \zeta}^{\mu} \cj {\wt v}
+ (\Ta_{\zeta}^{\mu} - \Ta_{\zeta}^{\nu}) v_{\nu}
+ ( \Ta_{\cj \zeta}^{\mu} - \Ta_{\cj \zeta}^{\nu}) \cj {v_{\nu}}.
\label{wtR}
\end{equation}

Set
\begin{align}
\wt L_n^s (t)
&:=
c_n \int_\T (\dx^{-1} \Im \Ta_\o^{\mu} (t))^n \big| \jb{D}^{s-1-\frac {(\al-2)n}2} \wt v (t) \big|^2 dx,
\label{wtLn}
\\
\wt K_n^s (t)
&:=
-\al c_n \Im \int_\T \dx \big( (\dx^{-1} \Im \Ta_\o^{\mu} (t))^n  \big)
\notag
\\
&\hspace*{40pt}
\times
\big(\jb{D}^{s-1-\frac {(\al-2)(n-1)}2} \dx \wt v (t) \big) \jb{D}^{s-1-\frac {(\al-2)(n-1)}2} \cj{\wt v (t)} dx
\label{wtKn}
\end{align}
for $s> \frac 32$ and $n \in \N$,
where $c_n$ is defined in \eqref{eq:Lncon1}.

By \eqref{est:Lnes} with $r=s$,
there exists $\wt b_n = \wt b_n(\|u_{\mu}\|_{L_T^\infty H^s})>0$ such that
\begin{equation*}
\begin{aligned}
\big|\wt L_n^{s} (t) \big| 
&\le
\frac 1{10 n^2} \| \wt v  (t) \|_{H^{s-1}}^2
+
\wt b_n \| \wt u (t) \|_{H^{s-1}}^2
\end{aligned}
\end{equation*}
for $n=1,\dots, N$,
where $N$ is defined in \eqref{conN}.
Hence, we have
\begin{equation}
\bigg| \sum_{n=1}^N L_n^{s} (t) \bigg|
\le
\frac 12
\| \wt v (t) \|_{H^{s-1}}^2
+
\sum_{n=1}^N \wt b_n \| \wt u (t) \|_{H^{s-1}}^2.
\label{WPL1}
\end{equation}

Set
\[
\wt b := 2 \sum_{n=1}^N \wt b_n.
\]
We then define
\[
\wt E^s (t) 
:=
\Big( \wt b \|\wt u (t)\|_{H^{s-1}}^2 + \| \wt v (t)\|_{H^{s-1}}^2 + \sum_{n=1}^N \wt L_n^s (t)
\Big)^{\frac12}.
\]
By \eqref{WPL1},
we have
\begin{equation}
\frac{\wt b}2
\|\wt u(t)\|_{H^{s-1}}^2 + \frac 12 \| \wt v(t)\|_{H^{s-1}}^2
\le
\wt E^{s}(t)^2
\le
2 \wt b
\|\wt u(t)\|_{H^{s-1}}^2 + 2 \| \wt v(t)\|_{H^{s-1}}^2
\label{diff2b}
\end{equation}
for $t \in [0,T]$.

We prove the following lemma.

\begin{lemma}
\label{lem:comodi}
Let $\al > 2$ and $\mu > \nu$.
Suppose that
\[
s>\regwp,
\quad
\wt s > \frac 32
\]
where $\regwp$ is defined in \eqref{condA}.
Then,
we have
\[
\begin{aligned}
&\wt K_n^s(t) + \dt \wt L_n^s(t) - \wt K_{n+1}^s(t)
\\
&\le
C( \| \phi \|_{H^s} )
( \|\wt u(t)\|_{H^s}
+
\| \wt u (t) \|_{H^{\wt s}} \| u_\nu (t) \|_{H^{s+1}}
)
\| \wt v(t)\|_{H^{s-1}}
\end{aligned}
\]
for any $n = 1, \cdots, N$ and $t \in (0,T)$,
where $N$ is defined in \eqref{conN}.
\end{lemma}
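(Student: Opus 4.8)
The plan is to reproduce the architecture of the proof of Lemma~\ref{lem:dimodi}, now carried out at regularity $s-1$ instead of $L^2$ and with no parabolic term, since $u_\mu$ and $u_\nu$ are genuine solutions of \eqref{fNLS}. Abbreviating $\sigma_n := s-1-\frac{(\al-2)n}{2}$, I first differentiate \eqref{wtLn} in time to write $\dt \wt L_n^s = \wt J + \wt R_1$, where $\wt R_1$ gathers the contribution of $\dt\big((\dx^{-1}\Im\Ta_\o^\mu)^n\big)$ and $\wt J$ is the term carrying $\cj{\dt\wt v}$. Since $\|u_\mu\|_{L_T^\infty H^s}\le C(\|\phi\|_{H^s})$ by Remark~\ref{rem:pers}, applying Lemma~\ref{lem:noli} to $u_\mu$ bounds $|\wt R_1|\le C(\|\phi\|_{H^s})\|\wt v\|_{H^{s-1}}^2$, exactly as in \eqref{brremi1a}; and a short computation from \eqref{conN} and $s>\regwp$ shows $\sigma_n>\frac12$ for every $n\le N$, so all commutators below fall under Proposition~\ref{prop:commC} and the negative-order Proposition~\ref{prop:commne} is not needed.

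Next I substitute \eqref{wtv} for $\dt\wt v$ into $\wt J$ and split it as $\sum_{\l=1}^{7}\wt J_\l+\wt R_2$: the terms $\wt J_1,\dots,\wt J_4$ carry the coefficient $\Ta_\o^\mu$ (coming respectively from the $D^\al$, $P_{\neq0}\Im\Ta_\o^\mu$, $\Re\Ta_\o^\mu$, and $\Ta_{\cj\o}^\mu$ contributions), the terms $\wt J_5,\wt J_6,\wt J_7$ carry the coefficient differences $\Ta_\o^\mu-\Ta_\o^\nu$ against $\dx v_\nu$, and $\wt R_2$ collects the commutators $[\jb{D}^{\sigma_n},\Ta_\o^\mu]\cj{\dx\wt v}$ together with the $\wt R$-term from \eqref{wtR}. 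The $\Ta_\o^\mu$-diagonal block is handled verbatim as in Lemma~\ref{lem:modi}: $\wt J_1$ combined with $\wt K_n^s$ is controlled through the higher-order commutator Proposition~\ref{prop:comm3} and the cancellation \eqref{remi4}, as in \eqref{J1Kn}; $\wt J_2$ reproduces exactly $\wt K_{n+1}^s$, as in \eqref{J3}; $\wt J_3,\wt J_4$ are estimated directly as in \eqref{J4}--\eqref{J5}; and $\wt R_2$ is bounded by Proposition~\ref{prop:commC} as in \eqref{remi2}. Together these give a net contribution $\le C(\|\phi\|_{H^s})\|\wt v\|_{H^{s-1}}^2\le C(\|\phi\|_{H^s})\|\wt u\|_{H^s}\|\wt v\|_{H^{s-1}}$, using $\|\wt v\|_{H^{s-1}}\le\|\wt u\|_{H^s}$.

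The genuinely new point is the estimate of the difference terms $\wt J_5,\wt J_6,\wt J_7$ and the difference part of $\wt R$, where the Bona--Smith balance must be extracted. Taking $\wt J_5$ as representative, after estimating $\|(\dx^{-1}\Im\Ta_\o^\mu)^n\|_{L^\infty}$ and $\|\jb{D}^{\sigma_n}\wt v\|_{L^2}\le\|\wt v\|_{H^{s-1}}$, the remaining factor is $\|(P_{\neq0}\Im\Ta_\o^\mu-P_{\neq0}\Im\Ta_\o^\nu)\,\dx v_\nu\|_{H^{\sigma_n}}$, which I split by the bilinear estimate Proposition~\ref{prop:bili2}. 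On the piece where the $\sigma_n$ derivatives land on the coefficient difference, Corollary~\ref{cor:bili3} applied to the polynomial $F_\o$ gives $\|\Ta_\o^\mu-\Ta_\o^\nu\|_{H^{\sigma_n}}\le C\|\wt u\|_{H^{\sigma_n+1}}\le C\|\wt u\|_{H^s}$, while the low factor $\|\dx v_\nu\|_{H^{\frac12+\eps}}=\|u_\nu\|_{H^{\frac52+\eps}}$ is absorbed into $C(\|\phi\|_{H^s})$ because $s>\frac52$; on the complementary piece, where the derivatives land on $\dx v_\nu$, one obtains the factor $\|\wt u\|_{H^{\frac32+\eps}}\|u_\nu\|_{H^{s+1}}$ from $\|\Ta_\o^\mu-\Ta_\o^\nu\|_{H^{\frac12+\eps}}\le C\|\wt u\|_{H^{\frac32+\eps}}$ and $\|\dx v_\nu\|_{H^{\sigma_n}}=\|u_\nu\|_{H^{\sigma_n+2}}\le\|u_\nu\|_{H^{s+1}}$. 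This is precisely the $\|\wt u\|_{H^{\wt s}}\|u_\nu\|_{H^{s+1}}$ term with $\wt s=\frac32+\eps$, and $\wt J_6,\wt J_7$ together with the difference part of $\wt R$ in $\wt R_2$ are handled identically.

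Collecting $\wt R_1$, the diagonal block, and the difference terms then yields the asserted bound. The main obstacle is exactly this last step: unlike the $L^2$-level estimate of Lemma~\ref{lem:dimodi}, at regularity $s-1$ the $\sigma_n$ derivatives must be distributed so that the smoother solution $u_\nu$ absorbs the full $H^{s+1}$ loss while the difference $\wt u$ is measured only in the weak norm $H^{\wt s}$. This is the balance that, combined with the approximation estimates of Lemma~\ref{lem:BoSm} and the freedom to take $\frac32<\wt s<s-1$ (possible since $s>\frac52$), makes the Bona--Smith continuity argument close, the smallness of $\|\wt u\|_{H^{\wt s}}$ outweighing the growth of $\|u_\nu\|_{H^{s+1}}$. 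The telescoping cancellation of the diagonal block and the bookkeeping of the shifts $\sigma_n$ require care but are inherited essentially verbatim from Lemmas~\ref{lem:modi} and \ref{lem:dimodi}.
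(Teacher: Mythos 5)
Your proposal is correct and follows essentially the same route as the paper's proof: differentiate $\wt L_n^s$, bound the coefficient term via the analogue of \eqref{remi1a}, substitute \eqref{wtv} to get the decomposition $\sum_{\l=1}^{7}\wt J_\l^s+\wt R_2^s$, treat $\wt J_1^s,\dots,\wt J_4^s$ together with $\wt K_n^s,\wt K_{n+1}^s$ exactly as in \eqref{J1Kn}, \eqref{J3}--\eqref{J5}, bound $\wt R_2^s$ by Proposition \ref{prop:commC} and Corollary \ref{cor:bili3}, and extract the $\|\wt u\|_{H^s}+\|\wt u\|_{H^{\wt s}}\|u_\nu\|_{H^{s+1}}$ structure from $\wt J_5^s,\wt J_6^s,\wt J_7^s$ via the Proposition \ref{prop:bili2} splitting, which is precisely \eqref{coJ15-3a}--\eqref{coJ15-3b}. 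The only cosmetic deviation is that you measure the difference products in $H^{s-1-\frac{(\al-2)n}{2}}$ rather than in $H^{s-1}$ as the paper does, and your observation that $s-1-\frac{(\al-2)n}{2}>\frac12$ (so Proposition \ref{prop:commne} is never needed) matches the paper's implicit use of Proposition \ref{prop:commC} only.
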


\begin{proof}
For simplicity, we suppress the time dependence in this proof.
Fix $n =1,\dots,N$.
It follows from \eqref{wtLn} that
\begin{equation}
\begin{aligned}
&\dt \wt L_n^s
\\
&=
2 c_n \Re \int_\T ( \dx^{-1} \Im \Ta_\o^{\mu} )^n
\big( \jb{D}^{s-1-\frac {(\al-2)n}2} \wt v \big) \jb{D}^{s-1-\frac {(\al-2)n}2} \cj{\dt \wt v}dx
\\
&\quad
+
c_n \int_\T \dt ( ( \dx^{-1} \Im \Ta_\o^{\mu} )^n )
\big| \jb{D}^{s-1-\frac {(\al-2)n}2} \wt v \big|^2 dx
\\
&=:
\wt J^s + \wt R_1^s.
\end{aligned}
\label{dtwtL}
\end{equation}
As in \eqref{remi1a}, with Corollary \ref{cor:enebdur}, we have
\begin{equation}
|\wt R_1^s| \le C(\| \phi \|_{H^s}) \|\wt v\|_{H^{s-1}}^2
\label{wtremi1a}
\end{equation}
for $s > \regwp$.

From \eqref{wtv}, we have
\begin{align}
\wt J^s
&=
-2 c_n \Im \int_\T \big( \dx^{-1}  \Im \Ta_\o^{\mu} \big)^n
\big( \jb{D}^{s-1-\frac {(\al-2)n}2} \wt v \big) \jb{D}^{s-1-\frac {(\al-2)n}2} \cj{D^\al \wt v}dx
\notag
\\
&\quad
+
2 c_n \Im \int_\T
\big( \dx^{-1}  \Im \Ta_\o^{\mu} \big)^n
(P_{\neq 0} \Im \Ta_\o^{\mu}) 
\notag
\\
&\hspace*{60pt}
\times
\big( \jb{D}^{s-1-\frac {(\al-2)n}2} \wt v \big)
\jb{D}^{s-1-\frac {(\al-2)n}2} \cj{\dx \wt v}dx
\notag
\\
&\quad
+
2 c_n \Re \int_\T (\dx^{-1} \Im \Ta_\o^{\mu})^n (\Re \Ta_\o^{\mu})
\notag
\\
&\hspace*{60pt}
\times
\big( \jb{D}^{s-1-\frac {(\al-2)n}2} \wt v \big) \jb{D}^{s-1-\frac {(\al-2)n}2} \cj{\dx \wt v}dx
\notag
\\
&\quad
+
2 c_n \Re \int_\T (\dx^{-1}  \Im \Ta_\o^{\mu})^n \cj{\Ta_{\cj \o}^{\mu}} 
\big( \jb{D}^{s-1-\frac {(\al-2)n}2} \wt v \big) \jb{D}^{s-1-\frac {(\al-2)n}2} \dx \wt v dx
\notag
\\
&\quad
+
2 c_n \Im \int_\T
\big( \dx^{-1}  \Im \Ta_\o^{\mu} \big)^n
\big( \jb{D}^{s-1-\frac {(\al-2)n}2} \wt v \big)
\notag
\\
&\hspace*{60pt}
\times
\jb{D}^{s-1-\frac {(\al-2)n}2} \big( (P_{\neq 0} \Im \Ta_\o^{\mu} - P_{\neq 0} \Im \Ta_\o^{\nu}) \cj{\dx v_\nu} \big) dx
\notag
\\
&\quad
+
2 c_n \Re \int_\T (\dx^{-1}  \Im \Ta_\o^{\mu})^n 
\big( \jb{D}^{s-1-\frac {(\al-2)n}2} \wt v \big)
\notag
\\
&\hspace*{60pt}
\times
\jb{D}^{s-1-\frac {(\al-2)n}2}
\big( (\Re \Ta_\o^{\mu} - \Re \Ta_\o^{\nu}) \cj{\dx v_\nu} \big) dx
\notag
\\
&\quad
+
2 c_n \Re \int_\T (\dx^{-1}  \Im \Ta_\o^{\mu})^n
\big( \jb{D}^{s-1-\frac {(\al-2)n}2} \wt v \big)
\notag
\\
&\hspace*{60pt}
\times
\jb{D}^{s-1-\frac {(\al-2)n}2}
\big( (\cj{\Ta_{\cj \o}^{\mu}} -\cj{\Ta_{\cj \o}^{\nu}})  \dx v_\nu \big) dx
\notag
\\
&\quad
+
\wt R_2^s
\notag
\\
&=:
\sum_{\l=1}^7 \wt J_\l^s + \wt R_2^s,
\label{wtJ1}
\end{align}
where
\begin{align}
\wt R_2^s
&=
2 c_n \Im \int_\T (\dx^{-1}  \Im \Ta_\o^{\mu})^n \big( \jb{D}^{s-1-\frac {(\al-2)n}2} \wt v \big)
\notag
\\
&\hspace*{50pt}
\times
\big[ \jb{D}^{s-1-\frac {(\al-2)n}2},  P_{\neq 0} \Im \Ta_\o^{\mu} \big] \cj{\dx \wt v} dx
\notag
\\
&\quad
+
2 c_n \Re \int_\T (\dx^{-1}  \Im \Ta_\o^{\mu})^n \big( \jb{D}^{s-1-\frac {(\al-2)n}2} \wt v \big)
\notag
\\
&\hspace*{50pt}
\times
\big[ \jb{D}^{s-1-\frac {(\al-2)n}2}, \Re \Ta_\o^{\mu} \big] \cj{\dx \wt v}dx
\notag
\\
&\quad
+
2 c_n \Re \int_\T (\dx^{-1}  \Im \Ta_\o^{\mu})^n \big( \jb{D}^{s-1-\frac {(\al-2)n}2} \wt v \big)
\big[\jb{D}^{s-1-\frac {(\al-2)n}2}, \cj{\Ta_{\cj \o}^{\mu}}\big] \dx \wt v dx
\notag
\\
&\quad
+
2 c_n \Re \int_\T (\dx^{-1}  \Im \Ta_\o^{\mu})^n 
\big( \jb{D}^{s-1-\frac {(\al-2)n}2} \wt v \big) \jb{D}^{s-1-\frac {(\al-2)n}2} \cj{\wt R} dx
\notag
\\
&=:
\sum_{\l=1}^4 \wt R_{2,\l}^s.
\label{wtR2}
\end{align}
We consider $\wt R_2^s$. From Proposition \ref{prop:commC}, we have
\begin{equation*}
\big\| \big[ \jb{D}^{s-1-\frac {(\al-2)n}2},  \Im \Ta_\o^{\mu} \big] \dx \wt v \big\|_{L^2}
\les
\| \Ta_\o^{\mu} \|_{H^{s-1}} \| \wt v \|_{H^{s-1}} 
\le
C( \| \phi \|_{H^s} ) \| \wt v \|_{H^{s-1}} 
\end{equation*}
for $s > \regwp$.
Hence, we obtain
\begin{equation}
\begin{aligned}
| \wt R_{2,1}^s | 
&\les
\| \dx^{-1}  \Im \Ta_\o^{\mu} \|_{L^\infty}^n
\big\| \jb{D}^{s-1-\frac {(\al-2)n}2} \wt v \big\|_{L^2}
\\
&
\hspace*{100pt}
\times
\big\| \big[ \jb{D}^{s-1-\frac {(\al-2)n}2},  \Im \Ta_\o^{\mu} ] \dx \wt v \big\|_{L^2} 
\\
&\le
C( \| \phi \|_{H^s} ) \| \wt v \|_{H^{s-1}}^2
\end{aligned}
\label{wtR21}
\end{equation} 
for $s > \regwp$.
Similarly, we have
\begin{equation}
| \wt R_{2,2}^s| + |\wt R_{2,3}^s | 
\le
C( \| \phi \|_{H^s} ) \| \wt v \|_{H^{s-1}}^2.
\label{wtR2223}
\end{equation}
It follows from \eqref{wtR} and Corollary \ref{cor:bili3} that
\begin{equation}
\begin{aligned}
| \wt R_{2,4}^s | 
&\les
\| \dx^{-1}  \Im \Ta_\o^{\mu} \|_{L^\infty}^n \| \jb{D}^{s-1} \wt v\|_{L^2} \| \wt R \|_{H^{s-1}}
\\
&\le
C( \| \phi \|_{H^s} ) \|\wt u \|_{H^s} \| \wt v \|_{H^{s-1}}.
\end{aligned}
\label{wtR27}
\end{equation}  
Collecting \eqref{wtR2}--\eqref{wtR27}, we get
\begin{equation}
| \wt R_2^s | 
\le
C( \| \phi \|_{H^s} ) \| \wt u \|_{H^s} \| \wt v \|_{H^{s-1}}.
\label{wtR2z}
\end{equation} 
for $s>\regwp$.

For $\wt J_{\l}$
with $\l \in \{1,2,3,4\}$,
the following estimate can be obtained by a calculation similar to that for \eqref{J1Kn}, \eqref{J3}--\eqref{J5}, respectively:
\begin{equation}
\begin{aligned}
\wt K_n^s + \sum_{\l=1}^4 \wt J_\l^s - \wt K_{n+1}^s
\le
C(\| \phi \|_{H^s}) \| \wt v \|_{H^{s-1}}^2.
\end{aligned}
\label{coJl}
\end{equation}

We consider the remaining parts.
First, for $\wt J_5^s$ in \eqref{wtJ1}, we show that
\begin{equation}
|\wt J_5^s|
\le
C(\|\phi\|_{H^s})
\big( \| \wt u \|_{H^s} + \| \wt u \|_{H^{\wt s}} \| u_\nu\|_{H^{s+1}} \big) \|\wt v\|_{H^{s-1}}
\label{coJ15-3}
\end{equation}
for $s> \frac 52$ and $\wt s> \frac 32$.

Proposition \ref{prop:bili} and Corollary \ref{cor:bili3} yield that
\begin{equation}
\begin{aligned}
&\big\| (\dx^{-1} \Im \Ta_\o^{\mu})^n \jb{D}^{s-1-\frac{(\al-2)n}2} \wt v \big\|_{L^2}
\\
&\le
\big\| (\dx^{-1} \Im \Ta_\o^{\mu})^n \big\|_{H^{s-1}} \big\| \jb{D}^{s-1-\frac{(\al-2)n}2} \wt v \big\|_{L^2}
\\
&\le
C(\|\phi\|_{H^s})
\|\wt v\|_{H^{s-1}}
\end{aligned}
\label{coJ15-3a}
\end{equation}
for $s>\frac 32$.
Moreover,
Proposition \ref{prop:bili2} and Corollary \ref{cor:bili3} that
\begin{equation}
\begin{aligned}
&\| ( P_{\neq 0} \Im \Ta_\o^{\mu} - P_{\neq 0} \Im\Ta_\o^{\nu} ) \dx v_{\nu} \|_{H^{s-1}} 
\\
&\les
\| \Ta_\o^\mu - \Ta_\o^\nu \|_{H^{s-1}} \| \dx v_{\nu} \|_{H^{\frac 12+\dl}}
+
\| \Ta_\o^\mu - \Ta_\o^\nu \|_{H^{\frac 12+\dl}} \| \dx v_{\nu} \|_{H^{s-1}}
\\
&\le
C(\|\phi\|_{H^s}) \big( \| \wt u \|_{H^s} + \| \wt u \|_{H^{\wt s}} \| u_\nu\|_{H^{s+1}} \big)
\end{aligned}
\label{coJ15-3b}
\end{equation}
for $s>\frac 52$, $\wt s > \frac 32$, and $0<\dl \ll 1$.
With \eqref{wtJ1}, \eqref{coJ15-3a}, and \eqref{coJ15-3b},
we have
\begin{align*}
|\wt J_5^s|
&\le
\big\| (\dx^{-1} \Im \Ta_\o^{\mu})^n \jb{D}^{s-1-\frac{(\al-2)n}2} \wt v \big\|_{L^2}
\\
&\quad
\times
\| ( P_{\neq 0} \Im \Ta_\o^{\mu} - P_{\neq 0} \Im\Ta_\o^{\nu} ) \dx v_{\nu} \|_{H^{s-1}}
\\
&\le
C(\|\phi\|_{H^s}) \big( \| \wt u \|_{H^s} + \| \wt u \|_{H^{\wt s}} \| u_\nu\|_{H^{s+1}} \big) \|\wt v\|_{H^{s-1}}
\end{align*}
for $s>\frac 52$, $\wt s > \frac 32$,
which shows \eqref{coJ15-3}.

With \eqref{wtJ1},
the same calculation as in \eqref{coJ15-3} yields that
\begin{equation}
|\wt J_6^s| + |\wt J_7^s|
\le
C(\|\phi\|_{H^s})
\big( \| \wt u \|_{H^s} + \| \wt u \|_{H^{\wt s}} \| u_\nu\|_{H^{s+1}} \big) \|\wt v\|_{H^{s-1}}.
\label{coJ1617}
\end{equation}
Combining \eqref{dtwtL}--\eqref{wtJ1}, \eqref{wtR2z}--\eqref{coJ15-3}, and \eqref{coJ1617},
we obtain the desired bound.
\end{proof}

The following energy estimate  plays a key role.

\begin{proposition}
\label{prop:code2}
Let
\[
s> \regwp,
\quad
\wt s > \frac 32,
\]
where $\regwp$ is defined in \eqref{condA}.
Then, there exists $C( \|\phi\|_{H^s})>0$
such that
\[
\dt \wt E^s (t)
\le
C( \|\phi\|_{H^s})
\big(
\wt E^s (t)
+
\| \wt u (t) \|_{H^{\wt s}} \| u_\nu (t) \|_{H^{s+1}}
\big)
\]
for $t \in (0,T)$ and $\mu,\nu \in \N$ with $\mu>\nu$.
\end{proposition}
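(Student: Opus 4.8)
The plan is to mimic the proof of Proposition \ref{prop:diff1}, but now measuring the difference at the high regularity $H^{s-1}$ rather than at $L^2$; the correction terms $\wt L_n^s$ have already been arranged in Lemma \ref{lem:comodi} so that the bad contributions telescope. Since $u_\mu$ and $u_\nu$ are genuine solutions to \eqref{fNLS}, Remark \ref{rem:pers} guarantees $u_\mu, u_\nu \in C([0,T]; H^{s+\frac\al2}(\T))$, which provides ample smoothness to justify all of the energy identities and integrations by parts below (as in Corollary \ref{cor:diff2}, one distributes the operator $\jb{D}^{s-1}$ symmetrically when pairing $\dt \wt v$ against $\wt v$). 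Differentiating $(\wt E^s)^2 = \wt b \|\wt u\|_{H^{s-1}}^2 + \|\wt v\|_{H^{s-1}}^2 + \sum_{n=1}^N \wt L_n^s$ in time, it suffices to bound the time derivative by $C(\|\phi\|_{H^s}) \wt E^s (\wt E^s + \|\wt u\|_{H^{\wt s}}\|u_\nu\|_{H^{s+1}})$ and then divide by $\wt E^s$, using the coercivity \eqref{diff2b} and $\|\wt u\|_{H^s}^2 = \|\wt u\|_{H^{s-1}}^2 + \|\wt v\|_{H^{s-1}}^2 \les (\wt E^s)^2$.

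For the $\wt u$-contribution I use \eqref{wtu}: the skew-adjoint term $iD^\al \wt u$ drops out of $\Re(\dt \wt u, \wt u)_{H^{s-1}}$, leaving $\Re(\Ta^\mu - \Ta^\nu, \wt u)_{H^{s-1}} \le \|\Ta^\mu - \Ta^\nu\|_{H^{s-1}}\|\wt u\|_{H^{s-1}}$, which by \eqref{bilinn2} with $r = s$ and \eqref{diff2b} is controlled by $C(\|\phi\|_{H^s})\|\wt u\|_{H^s}\|\wt u\|_{H^{s-1}} \les C(\|\phi\|_{H^s})(\wt E^s)^2$. For the $\wt v$-contribution I expand $\Re(\jb{D}^{s-1}\dt \wt v, \jb{D}^{s-1}\wt v)_{L^2}$ via \eqref{wtv}: the term $iD^\al \wt v$ contributes an imaginary quantity and vanishes exactly as in \eqref{exI1}; the problematic term $i(P_{\neq 0}\Im \Ta_\o^\mu)\dx \wt v$ splits into a commutator piece, handled by Proposition \ref{prop:commC} and Corollary \ref{cor:bili3}, plus a principal piece equal to $\tfrac12 \wt K_1^s$; the terms $(\Re\Ta_\o^\mu)\dx\wt v$ and $\Ta_{\cj\o}^\mu\cj{\dx\wt v}$ are treated by a commutator estimate together with an integration by parts that moves the derivative onto $\Re\Ta_\o^\mu$; and $\wt R$ from \eqref{wtR}, being a polynomial, is bounded by $C(\|\phi\|_{H^s})\|\wt u\|_{H^s}\|\wt v\|_{H^{s-1}}$ through Corollary \ref{cor:bili3}.

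The remaining three terms in \eqref{wtv}, namely those carrying $P_{\neq 0}\Im\Ta_\o^\mu - P_{\neq 0}\Im\Ta_\o^\nu$, $\Re\Ta_\o^\mu - \Re\Ta_\o^\nu$, and $\Ta_{\cj\o}^\mu - \Ta_{\cj\o}^\nu$ multiplied by $\dx v_\nu$, are estimated exactly as in \eqref{coJ15-3b}: I apply the high-low product estimate of Proposition \ref{prop:bili2} to, e.g., $\|(\Ta_\o^\mu-\Ta_\o^\nu)\dx v_\nu\|_{H^{s-1}}$, bounding $\Ta_\o^\mu - \Ta_\o^\nu$ by $\|\wt u\|_{H^s}$ in $H^{s-1}$ (paired with the bounded factor $\|\dx v_\nu\|_{H^{\frac12+\dl}}$) and by $\|\wt u\|_{H^{\wt s}}$ in $H^{\frac12+\dl}$ (paired with $\|\dx v_\nu\|_{H^{s-1}} \les \|u_\nu\|_{H^{s+1}}$), invoking \eqref{bilinn2} with $r = s$ and $r = \wt s$ respectively. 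This yields precisely $C(\|\phi\|_{H^s})(\|\wt u\|_{H^s} + \|\wt u\|_{H^{\wt s}}\|u_\nu\|_{H^{s+1}})\|\wt v\|_{H^{s-1}}$. For the correction terms I invoke Lemma \ref{lem:comodi} and sum over $n$: the chain $\sum_{n=1}^N(\wt K_n^s + \dt\wt L_n^s - \wt K_{n+1}^s)$ telescopes to $\wt K_1^s + \sum_n \dt\wt L_n^s - \wt K_{N+1}^s$, the leading $\wt K_1^s$ cancels the contribution $2\cdot\tfrac12\wt K_1^s$ from the principal part of the $\Im\Ta_\o^\mu$ term, and the boundary term $\wt K_{N+1}^s$ is absorbed into $C(\|\phi\|_{H^s})\|\wt v\|_{H^{s-1}}^2$ exactly as in \eqref{exKN} and \eqref{difKN}.

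I expect the main obstacle to be these three difference terms (and the corresponding $\wt J_5^s, \wt J_6^s, \wt J_7^s$ already dispatched inside Lemma \ref{lem:comodi}). Unlike in Proposition \ref{prop:diff1}, where $\wt u$ was measured in $L^2$ and could be made small outright, here the difference of the two Bona--Smith approximations is \emph{not} small at the top regularity $H^s$; one can only gain smallness at the low regularity $H^{\wt s}$. The resolution is the asymmetric high-low splitting of Proposition \ref{prop:bili2}, which isolates the high-frequency derivative loss onto $\dx v_\nu = \dx^2 u_\nu$ and pays for it with $\|u_\nu\|_{H^{s+1}}$, while keeping $\wt u$ at the low regularity $\wt s$. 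This is exactly what forces the inhomogeneous term $\|\wt u\|_{H^{\wt s}}\|u_\nu\|_{H^{s+1}}$ into the statement, and the later balancing of this product against the approximation estimates of Lemma \ref{lem:BoSm} is what ultimately delivers continuous dependence on the initial data.
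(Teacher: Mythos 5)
Your proposal is correct and follows essentially the same route as the paper's own proof: differentiating $(\wt E^s)^2$, using \eqref{wtu} with the difference estimate \eqref{bilinn2} for the $\wt u$-part, decomposing the $\wt v$-energy identity into the commutator piece, the principal piece $\tfrac12 \wt K_1^s$, the integration-by-parts terms, the three difference terms handled by the high-low splitting \eqref{coJ15-3b} via Proposition \ref{prop:bili2}, and the $\wt R$-term, and then telescoping the correction terms via Lemma \ref{lem:comodi} with the $\wt K_1^s$ cancellation and the absorption of $\wt K_{N+1}^s$ as in \eqref{exKN}. The regularity justification via Remark \ref{rem:pers}, the coercivity \eqref{diff2b}, and the concluding division by $\wt E^s$ all match the paper's argument (equations \eqref{conineq}--\eqref{cowtv2}).
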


\begin{proof}

We prove that there exists $C(\|\phi\|_{H^s})>0$ such that
\begin{equation}
\dt  ( \wt E^s (t)^2)
\le 
C(\|\phi\|_{H^s}) \big( \wt E^s(t) + \| \wt u (t) \|_{H^{\wt s}} \| u_\nu (t) \|_{H^{s+1}} \big)
\wt E^s(t).
\label{conineq}
\end{equation}
We suppress time dependence for short.
From \eqref{wtu}, Proposition \ref{prop:bili},  and \eqref{est:solbd1}, we have
\begin{equation}
\begin{aligned}
\dt \big( \|\wt u\|_{H^{s-1}}^2 \big)
&\le
2 \|\Ta^\mu - \Ta^\nu\|_{H^{s-1}} \|\wt u\|_{H^{s-1}}
\\
&\le
C(\|\phi\|_{H^s}) \|\wt u\|_{H^{s}} \|\wt u\|_{H^{s-1}} .
\end{aligned}
\label{codeu}
\end{equation}

By \eqref{wtv}, we have
\begin{align}
\frac12 \dt \big( \| \wt v\|_{H^{s-1}}^2 \big)
&=
\Re \big( \jb{D}^{s-1}\big (i (P_{\neq 0} \Im \Ta_{\o}^{\mu}) \dx \wt v \big), \jb{D}^{s-1}\wt v \big)_{L^2}
\notag
\\
&\quad
+\Re \big( \jb{D}^{s-1}\big ((\Re \Ta_{\o}^{\mu}) \dx \wt v \big), \jb{D}^{s-1}\wt v \big)_{L^2}
\notag
\\
&\quad
+ 
\Re \big( \jb{D}^{s-1}(\Ta_{\cj \o}^{\mu} \cj {\dx \wt v}), \jb{D}^{s-1} \wt v \big)_{L^2}
\notag
\\
&\quad
+
\Re \big( \jb{D}^{s-1} \big( i (P_{\neq 0} \Im \Ta_{\o}^{\mu}- P_{\neq 0} \Im \Ta_{\o}^{\nu}) \dx v_{\nu} \big), \jb{D}^{s-1}\wt v \big)_{L^2}
\notag
\\
&\quad
+
\Re \big( \jb{D}^{s-1} \big( (\Re \Ta_{\o}^{\mu}- \Re \Ta_{\o}^{\nu}) \dx v_{\nu} \big), \jb{D}^{s-1}\wt v \big)_{L^2}
\notag
\\
&\quad
+
\Re \big( \jb{D}^{s-1} \big( (\Ta_{\cj \o}^{\mu}- \Ta_{\cj \o}^{\nu}) \cj{\dx v_{\nu}}\big) , \jb{D}^{s-1}\wt v \big)_{L^2}
\notag
\\
&\quad
+
\Re \big( \jb{D}^{s-1} \wt R , \jb{D}^{s-1} \wt v \big)_{L^2}
\notag
\\
&=:
\sum_{j=1}^7 \I_j .
\label{normv}
\end{align}

We decompose $\I_1$ as follows:
\begin{equation*}
\begin{aligned}
\I_1 
&= 
-\Im \big( \big[ \jb{D}^{s-1}, P_{\neq 0} \Im \Ta_\o^\mu \big] \dx \wt v, \jb{D}^{s-1} \wt v \big)_{L^2}
\\
&\quad
-\Im \big( (P_{\neq 0} \Im \Ta_\o^\mu) \jb{D}^{s-1} \dx \wt v, \jb{D}^{s-1} \wt v \big)_{L^2}
\\
&=: \I_{1,1} + \I_{1,2}.
\end{aligned}
\end{equation*}
Proposition \ref{prop:commC} and Corollary \ref{cor:bili3} imply that
\[
|\I_{1,1}|
\les
\| P_{\neq 0} \Im \Ta_\o^\mu \|_{H^{s-1}} \|\wt v\|_{H^{s-1}} \| \wt v\|_{H^{s-1}}
\le
C(\|\phi\|_{H^s}) \|\wt v\|_{H^{s-1}}^2
\]
for $s>\frac 52$.
By \eqref{wtKn}, we have
\[
\I_{1,2} = \frac 12 \wt K_1^s.
\]
Hence, we obtain that
\begin{equation}
\Big| \I_1 - \frac 12 \wt K_1^s \Big| \le C(\|\phi\|_{H^s}) \|\wt v\|_{H^{s-1}}^2.
\label{coI1}
\end{equation}

With \eqref{normv},
the same calculation as in \eqref{exI3} yields that
\begin{equation}
|\I_2| + |\I_3| \le C(\|\phi\|_{H^s}) \| \wt u \|_{H^s} \|\wt v\|_{H^{s-1}}
\label{coI2I3}
\end{equation}
for $s>\frac 52$.

We consider $\I_4$ in \eqref{normv}.
It follows from \eqref{coJ15-3b} that
\begin{equation}
\begin{aligned}
|\I_4|
&\le
\big\| \big( P_{\neq 0} \Im \Ta_\o^\mu - P_{\neq 0} \Im \Ta_\o^\nu \big) \dx v_{\nu} \big\|_{H^{s-1}}
\| \wt v \|_{H^{s-1}}
\\
&\le
C(\|\phi\|_{H^s}) \big( \| \wt u \|_{H^s} + \| \wt u \|_{H^{\wt s}} \| u_\nu\|_{H^{s+1}} \big) \|\wt v\|_{H^{s-1}}
\end{aligned}
\label{coI4}
\end{equation}
for $s>\frac 52$ and $\wt s > \frac 32$.
The same calculation as in \eqref{coI4} yields that
\begin{equation}
|\I_5| + |\I_6| \le C(\|\phi\|_{H^s})
\big( \| \wt u \|_{H^s} + \| \wt u \|_{H^{\wt s}} \| u_\nu\|_{H^{s+1}} \big) \|\wt v\|_{H^{s-1}}.
\label{coI5I6}
\end{equation}
By \eqref{normv}, \eqref{difTa}, and \eqref{wtR}, we obtain 
\begin{equation}
|\I_7|
\le
\|R_2^\mu - R_2^\nu\|_{H^{s-1}} \| \wt v\|_{H^{s-1}}
\le
C(\|\phi\|_{H^s}) \| \wt u \|_{H^s} \|\wt v\|_{H^{s-1}}.
\label{coI7}
\end{equation}

It follows from Lemma \ref{lem:comodi}, \eqref{normv}--\eqref{coI7} that
\begin{equation}
\begin{aligned}
&\dt  \Big(\| \wt v\|_{H^{s-1}}^2 + \sum_{n=1}^N \wt L_n^{s} \Big)
\\
&\le
2 \sum_{j=1}^7 \I_j - \wt K_1^{s}
+\Big(\sum_{n=1}^N \wt K_n^{s} + \dt  \wt L_n^{s} - \wt K_{n+1}^{s} \Big) + \wt K_{N+1}^{s}
\\
&\le
C(\|\phi\|_{H^s})
\big( \| \wt u \|_{H^s} + \| \wt u \|_{H^{\wt s}} \| u_\nu\|_{H^{s+1}} \big) \|\wt v\|_{H^{s-1}} 
+ \wt K_{N+1}^{s}
\end{aligned}
\label{cowtv}
\end{equation} 
for $s > \regwp$.
Moreover,
with \eqref{wtKn},
the same calculation as in \eqref{exKN} yields that
\begin{equation}
\begin{aligned}
|K_{N+1}^{s}|
&\le
C(\|\phi\|_{H^s}) \|\wt v\|_{H^{s-1}}^2.
\end{aligned}
\label{coKn}
\end{equation}
Combining \eqref{cowtv}, \eqref{coKn}, and \eqref{diff2b},
we get
\begin{equation}
\dt  \Big(\| \wt v\|_{H^{s-1}}^2 + \sum_{n=1}^N \wt L_n^{s} \Big) 
\le
C(\|\phi\|_{H^s})
\big(\wt E^s + \| \wt u \|_{H^{\wt s}} \| u_\nu\|_{H^{s+1}} \big) \wt E^s .
\label{cowtv2}
\end{equation}
From \eqref{codeu} and \eqref{cowtv2}, we get \eqref{conineq},
which completes the proof.
\end{proof}

Remark \ref{rem:pers} and Lemma \ref{lem:BoSm} yield that
\[
\|u_\nu\|_{L_T^\infty H^{s+1}}
\le
C(\|\phi\|_{H^s})(1 + \|\phi_\nu\|_{H^{s+1}})
\le
C(\|\phi\|_{H^s})
\nu.
\]
Moreover,
from Corollary \ref{cor:diff2} and Lemma \ref{lem:BoSm} with $\mu > \nu$,
we have
\begin{equation}
\begin{aligned}
\| \wt u \|_{L_T^\infty H^{\wt s}}
&\le
\| \wt u \|_{L_T^\infty H^1}^{\frac{s-\wt s}{s-1}} \| \wt u \|_{L_T^\infty H^s}^{\frac{\wt s-1}{s-1}}
\le
C(\|\phi\|_{H^s})
\|\phi_\mu - \phi_\nu\|_{H^1}^{\frac{s-\wt s}{s-1}}
\\
&\le
C(\|\phi\|_{H^s})
\nu^{-(s-\wt s)}
\end{aligned}
\label{BSintaa}
\end{equation}
for $\wt s \in (1, s)$.
Hence,
Proposition \ref{prop:code2} yields that
\[
\wt E_s(t)
\le
\big(\wt E_s(0) + \nu^{1-(s-\wt s)} \big) e^{C(\| \phi \|_{H^s}) t}
\]
for $s>\regwp$ and $\wt s \in (\frac{3}2, s)$.
From \eqref{diff2b},
we have
\begin{equation}
\| u_\mu - u_\nu \|_{L_T^\infty H^s}
\le
C(\| \phi \|_{H^s})
\big( \| \phi_\mu - \phi_\nu \|_{H^s} + \nu^{1-(s-\wt s)} \big)
\label{BSdiffa}
\end{equation}
for $\mu>\nu$.
Here,
we can choose $\wt s$ such that
\begin{equation}
\frac{3}2 < \wt s < s-1,
\label{wtscond1}
\end{equation}
which follows from $s> \frac 52$.
Then,
$\{ u_\mu \}_{\mu \in \N}$ converges to $u$ constructed at the end of Subsection \ref{SUBSEC:energy} in $C([0,T]; H^s(\T))$.
In particular, we have $u \in C([0,T]; H^s(\T))$.

Finally, we prove the well-posedness result in Theorem \ref{thm:equiv}.

\begin{proof}[Proof of the well-posedness for \eqref{fNLS}]
Let $\{ \phi^{(\l)} \}_{\l \in \N} \subset H^s(\T)$ converge to $\phi$ in $H^s(\T)$.
Moreover, let $\phi^{(\l)}_\mu$ be the truncation of $\phi^{(\l)}$ defined in \eqref{BS_ini1}.
Set $u^{(\l)}$ and $u^{(\l)}_\mu$ as the solutions to \eqref{fNLS} with the initial data  $\phi^{(\l)}$ and $\phi^{(\l)}_\mu$, respectively.

First, we prove
\begin{equation}
\lim_{\l \to \infty} u_\mu^{(\l)} = u_\mu
\label{WPumu}
\end{equation}
in $C([0,T];H^s(\T))$ for $\mu \in \N$.
Let $\wt s$ satisfy \eqref{wtscond1}.
Then,
the same calculation as in the proof of Proposition \ref{prop:code2} yields that
\begin{align*}
&\|u_\mu^{(\l)} - u_\mu\|_{L_T^\infty H^s}
\\
&\le
\Big( \|\phi_\mu^{(\l)} - \phi_\mu \|_{H^s}
\\
&\qquad
+
\|u_\mu^{(\l)} - u_\mu\|_{L_T^\infty H^{\wt s}}
\big( \|u_\mu^{(\l)}\|_{L_T^\infty H^{s+1}} + \|u_\mu\|_{L_T^\infty H^{s+1}} \big) 
\Big) e^{C(\|\phi\|_{H^s})T}.
\end{align*}
From Remark \ref{rem:pers}, we have
\begin{align*}
\|u_\mu^{(\l)}\|_{L_T^\infty H^{s+1}} + \|u_\mu\|_{L_T^\infty H^{s+1}}
&\le
C(\|\phi_\mu\|_{H^s})(1 + \|\phi_\mu^{(\l)}\|_{H^{s+1}} + \|\phi_\mu\|_{H^{s+1}})
\\
&\le
C(\|\phi\|_{H^s}) \mu. 
\end{align*}
The same calculation as in \eqref{BSintaa} yields that
\[
\|u_\mu^{(\l)} - u_\mu\|_{L_T^\infty H^{\wt s}}
\le
C(\|\phi\|_{H^s})
\|\phi_\mu^{(\l)} - \phi_\mu \|_{H^1}^{\frac{s-\wt s}{s-1}}
.
\]
Hence, we have
\begin{equation*}
\|u_\mu^{(\l)} - u_\mu\|_{L_T^\infty H^s}
\le
C(\|\phi\|_{H^s})
\Big( \|\phi_\mu^{(\l)} - \phi_\mu \|_{H^s}
+
\mu
\|\phi_\mu^{(\l)} - \phi_\mu \|_{H^1}^{\frac{s-\wt s}{s-1}}
\Big).
\end{equation*}
Since $\phi_\mu^{(\l)}$ and $\phi_\mu$ are truncations of $\phi^{(\l)}$ and $\phi$, respectively,
we obtain \eqref{WPumu}.

The triangle inequality and \eqref{BSdiffa} show that
\begin{equation}
\begin{aligned}
&\| u^{(\l)} - u\|_{L_T^\infty H^s}
\\
&\le
\| u^{(\l)} - u_\mu^{(\l)}\|_{L_T^\infty H^s} + \|u_\mu^{(\l)} - u_\mu\|_{L_T^\infty H^s}
+\| u_\mu - u\|_{L_T^\infty H^s}
\\
&\le
C(\|\phi\|_{H^s})
\big( \|\phi^{(\l)} - \phi_\mu^{(\l)}\|_{H^s} + \|\phi_\mu - \phi\|_{H^s} + \mu^{1-(s-\wt s)} \big) 
\\
&\quad
+ \|u_\mu^{(\l)} - u_\mu\|_{L_T^\infty H^s}.
\end{aligned}
\label{WPdifu}
\end{equation}
By \eqref{BS_ini1}, we obtain that
\begin{equation}
\begin{aligned} 
\|\phi^{(\l)} - \phi_\mu^{(\l)}\|_{H^s}
&\le
\|\phi^{(\l)} - \phi\|_{H^s} + \|\phi - \phi_\mu \|_{H^s} + \|\phi_\mu - \phi_\mu^{(\l)}\|_{H^s}
\\
&\le
2 \|\phi^{(\l)} - \phi\|_{H^s} + \|\phi - \phi_\mu \|_{H^s}. 
\end{aligned}
\label{WPphi}
\end{equation}

From \eqref{WPumu}--\eqref{WPphi}, we obtain that
\[
\lim_{\l \to \infty} u^{(\l)} = u.
\]
in $C([0,T];H^s(\T))$.
Thus, the well-posedness result holds.
\end{proof}

\section{Non-existence}
\label{SEC:nonex}

In this section, we prove non-existence of a solution to \eqref{fNLS} when \eqref{Fbez2} holds.
The main result in this section is the following.

\begin{theorem}
\label{thm:NE2}

Let $\al > 2$ and  $s \in \R$ satisfy
\[
s> \regwp,
\]
where $\regwp$ is defined in
\eqref{condA}.
Assume that
$u \in C([0,T]; H^s (\T))$ is a solution to \eqref{fNLS}
for some $T>0$.

\begin{enumerate}
\item
If
\[
\int_\T \Im F_\o ( \phi,\dx \phi,\cj{\phi},\cj{\dx \phi} ) dx >0,
\]
then we have
$P_- \phi \in H^{s+\dl}(\T)$
for any  $\dl \in (0,\min (\al-2,1))$.

\item
If
\[
\int_\T \Im F_\o ( \phi,\dx \phi,\cj{\phi},\cj{\dx \phi} ) dx <0,
\]
then we have
$P_+ \phi \in H^{s+\dl} (\T)$
for any  $\dl \in (0,\min (\al-2,1))$.
\end{enumerate}

The same conclusion holds for a solution $u \in C([-T,0]; H^s(\T))$ by interchanging $P_- \phi$ and $P_+ \phi$.
\end{theorem}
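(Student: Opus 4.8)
The plan is to follow the strategy of \cite{KiTs18, KoOk25b}: isolate the Cauchy--Riemann-type operator produced by the resonant part of the nonlinearity, diagonalize it together with the dispersion by an integrating factor, and read off the forced smoothing of the initial data on the frequencies where this operator is backward-elliptic. I would treat case (i) in detail; case (ii) and the backward-time assertion follow by the obvious symmetries (complex conjugation and time reversal, which exchange $P_+$ and $P_-$).

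First I would pass to $v := \dx u$, which by Remark \ref{rem:sold} lies in $C([0,T];H^{s-1}(\T)) \cap C^1([0,T];H^{s-1-\al}(\T))$ and solves \eqref{eqvv}. Writing $a(t) := \int_\T \Im F_\o(u,\dx u,\cj u,\cj{\dx u})(t)\,dx$ and splitting $\Im F_\o = a(t) + P_{\neq 0}\Im F_\o$, the $v$-equation takes the form
\[
\dt v + i D^\al v = i\,a(t)\,\dx v + \mathcal{G},
\]
where $\mathcal{G}$ collects $i(P_{\neq 0}\Im F_\o)\dx v$, $(\Re F_\o)\dx v$, $F_{\cj\o}\cj{\dx v}$ and the polynomial remainder. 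Since $a(0) = \int_\T\Im F_\o(\phi,\dx\phi,\cj\phi,\cj{\dx\phi})\,dx > 0$ by hypothesis, continuity of $t\mapsto a(t)$ gives $a(t)>0$, hence $A(t) := \int_0^t a(\tau)\,d\tau > 0$, on some $(0,t_0]$. Because $\widehat v(0,k)=ik\,\widehat\phi(k)$ and $|k|\sim\jb{k}$ on the support of $P_-$, the assertion $P_-\phi \in H^{s+\dl}(\T)$ is equivalent to gaining $\dl$ derivatives on $P_- v(0)\in H^{s-1+\dl}(\T)$, which is what I would prove.

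Next I would diagonalize both the dispersion and the resonant term by the integrating factor $\mu(t,k) := e^{\,i|k|^\al t + k A(t)}$. Solving \eqref{eqvv} mode by mode (using $\partial_t(\mu\,\widehat v)=\mu\,\widehat{\mathcal G}$ and $\mu(0,k)=1$) and evaluating at $t=0$ gives, for each $k$,
\[
\widehat v(0,k) = \mu(t_0,k)\,\widehat v(t_0,k) - \int_0^{t_0} \mu(\tau,k)\,\widehat{\mathcal{G}}(\tau,k)\,d\tau .
\]
On negative frequencies $|\mu(\tau,k)| = e^{-|k|A(\tau)}$, so the boundary term $\mu(t_0,k)\widehat v(t_0,k)$ decays exponentially in $|k|$ and places $P_-$ of it in $H^\infty$; the whole problem reduces to bounding $P_-$ of the Duhamel integral in $H^{s-1+\dl}(\T)$.

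The hard part is precisely this Duhamel estimate, and it is where $\al>2$ enters. The genuinely resonant self-interaction has been absorbed into the exponential weight, so the remaining high-frequency interactions in $\mathcal{G}$ producing a large negative output are non-resonant: the phase $|k|^\al-|k_1|^\al-|k_2|^\al$ has size $\sim|k|^\al$, and a normal-form reduction (integration by parts in $\tau$, rewriting $\dt$ via \eqref{eqvv}) trades this large modulation against the single derivative lost in $\dx v$; this is the same order-$(\al-2)$ smoothing that drives the modified energy in Section \ref{SEC:wp}, so it manifests here as a gain of up to $\al-2$ derivatives, while interactions in which $k<0$ is not the dominant frequency are handled directly by the factor $e^{-|k|A(\tau)}$. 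Combined with the bilinear estimates of Section \ref{SEC:Bili} and the algebra of $F$, this yields, for $\dl\in(0,\min(\al-2,1))$, the bound $P_- v(0)\in H^{s-1+\dl}(\T)$, i.e. $P_-\phi\in H^{s+\dl}(\T)$. I would package the non-resonant bookkeeping into an auxiliary estimate for $\mathcal{G}$ (the analogue of Proposition \ref{prop:FGH}), which is the main obstacle: one must simultaneously control the derivative loss, track the sign of the output frequency so that the exponential Cauchy--Riemann weight is used only where it is smoothing, and absorb the loss in time-integrability caused by the degeneration $A(\tau)\to 0$ as $\tau\to 0^+$. The cap $\min(\al-2,1)$ reflects both this $\al-2$ smoothing budget and the fact that a single product against a fixed-regularity factor cannot return more than one derivative.
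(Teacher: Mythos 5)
Your overall route is the same as the paper's: pass to $v=\dx u$, conjugate by the integrating factor $e^{i|k|^\al t + kA(t)}$, note that for $k<0$ the boundary term at $t=t_0$ is exponentially smoothed, and estimate the Duhamel contribution by splitting into a region handled directly by bilinear estimates and a non-resonant region handled by a normal form in time (the paper's $\Nl_{j,1}$, $\Ml_j$, $\Kl_j$ and Proposition \ref{prop:FGH}), with the cap $\min(\al-2,1)$ arising exactly as you describe, and with the same $\eps$-loss to handle the degeneration of the time integral near $\tau=0$ (the paper's estimate \eqref{eintM}).

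There is, however, one genuine gap. Your $\mathcal G$ contains the full term $(\Re F_\o)\dx v$, and in particular its zero mode $(\Re P_0 F_\o)(t)\,\dx v$. This term is exactly as resonant as the one you diagonalized: on the Fourier side it is $ik\,(\Re P_0 F_\o)(t)\,\ft v(t,k)$, i.e. the interaction with $k_1=0$, $k_2=k$, for which the phase $|k|^\al-|k_2|^\al$ vanishes identically, so no normal form is available; your claim that after absorbing $i\,a(t)\dx v$ the remaining interactions producing a large negative output are non-resonant is therefore false. Nor does the exponential weight rescue it: the term loses exactly one derivative, while $\int_0^{t_0}e^{-|k|A(\tau)}\,d\tau \les |k|^{-1}$ gains at most one, so its contribution to the Duhamel integral lands only in $\l^2_{s-1}$ — it breaks even and blocks any gain $\dl>0$. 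The paper eliminates this term \emph{before} the Fourier analysis by the time-dependent spatial translation $\wt u(t,x) = u\big( t, x - \int_0^t \Re P_0 \Ta_\o(t')\,dt'\big)$, after which only $i(\Im P_0\Ta_\o)\dx v$ survives as a zero-mode coefficient; equivalently, you could absorb it into your integrating factor as the extra unimodular factor $e^{ikB(t)}$ with $B(t)=\int_0^t \Re P_0 F_\o(\tau)\,d\tau$, which changes none of your modulus estimates. With that correction (and noting that the relevant non-resonant phase is the two-wave quantity $|k|^\al - |k_2|^\al \sim |k_1|\,|k_2|^{\al-1}$ on the region $|k_1|<\frac{|k_2|}2$, not the three-wave phase of size $|k|^\al$ you wrote), your argument coincides with the paper's proof.
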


Once Theorem \ref{thm:NE2} is obtained,
Theorem \ref{thm:nonexi} follows from choosing $\phi \in H^s(\T)$ with
$P_\pm \phi \notin H^{s+\dl}(\T)$ and
\[
\int_\T \Im F_\o ( \phi(x),\dx \phi(x),\cj{\phi(x)},\cj{\dx \phi(x)} ) dx
\neq 0.
\]
From the assumption of Theorem \ref{thm:nonexi},
we can choose such $\phi$ by an approximation argument.
See Lemma 4.2 in \cite{KoOk25b}.

\subsection{A priori estimates}
\label{subsec:apriori}

Let $u$ be the solution to \eqref{fNLS} in $C([0,T]; H^s (\T))$.
Set
\[
v = \dx u.
\]
For simplicity,
we write
\[
\Ta = F(u, v, \cj{u}, \cj{v}),
\quad
\Ta_{\zeta} = F_{\zeta} (u, v, \cj{u}, \cj{v}),
\quad
\dots.
\]
Then, $v$ satisfies
\begin{equation}
\begin{aligned}
\dt v + i D^\al v
&=
\Ta_{\zeta} v
+
\Ta_{\o} \dx v
+
\Ta_{\cj \zeta} \cj v
+
\Ta_{\cj \o} \cj{\dx v}
\\
&=
(P_0 \Ta_{\o}) \dx v
+
(P_{\neq 0} \Ta_{\o}) \dx v
+
\Ta_{\cj \o} \cj{\dx v}
+ R,
\end{aligned}
\label{eqv}
\end{equation}
where $R$ denotes a polynomial in
$u,v,\cj{u},\cj{v}$.

We can remove $\Re (P_0 \Ta_{\o}) \dx v$ from the first term on the right-hand side of \eqref{eqv}.
Indeed,
we define
\begin{align*}
\wt u  (t,x)
&:= u \bigg( t, x - \int_0^t \Re P_0 \Ta_\o(t') dt' \bigg),
\\
\wt v &:= \dx \wt u,
\quad
\wt \Ta := F( \wt u, \wt v, \cj{\wt u}, \cj{\wt v}).
\end{align*}
Then,
$\wt v$ satisfies
\begin{align*}
\dt \wt v + i D^\al \wt v
=
i \Im (P_0 \wt \Ta_{\o}) \dx \wt v
+
(P_{\neq 0} \wt \Ta_{\o}) \dx \wt v
+
\wt \Ta_{\cj \o} \cj{\dx \wt v}
+ \wt R.
\end{align*}

In the following, let $\wt u$ and $\wt v$ be simply denoted as $u$ and $v$.
That is, we assume that $v$ satisfies the following.
\begin{equation}
\begin{aligned}
\dt v + i D^\al v
=
i(\Im P_0 \Ta_{\o}) \dx v
+
(P_{\neq 0} \Ta_{\o}) \dx v
+
\Ta_{\cj \o} \cj{\dx v}
+ R.
\end{aligned}
\label{eqv2}
\end{equation}
Set
\[
\ft V(t,k)
=
e^{i|k|^\al t} \ft v(t,k).
\]
With \eqref{eqv2}, $V$ satisfies
\begin{equation}
\begin{aligned}
&\dt \ft V (t,k)
\\
&=
- (\Im P_0 \Ta_\o(t)) k \ft V(t,k)
+ 
e^{i|k|^\al t} \Ft \big[ (P_{\neq 0} \Ta_{\o}) {\dx v} \big] (t,k)
\\
&\quad
+ 
e^{i|k|^\al t} \Ft \big[ \Ta_{\cj \o} \cj{\dx v} \big] (t, k)
+
e^{i|k|^\al t} \ft{R}(t,k)
\\
&=:
- (\Im P_0 \Ta_\o(t)) k \ft V(t,k)
+
\Nl_1 (t,k)
+
\Nl_2 (t,k)
+
\Nl_3 (t,k).
\label{ftfW}
\end{aligned}
\end{equation}
We decompose $\Nl_1$ into three parts as follows:
\begin{equation}
\begin{aligned}
\Nl_1(t,k)
&=
i \sum_{k_1+k_2=k} e^{i(|k|^\al - {|k_2|}^\al)t}
\ft {P_{\neq 0} \Ta_{\o}} (t, k_1) k_2 \ft V (t,k_2)
\\
&=
i \bigg(
\sum_{D_1(k)}
+
\sum_{D_2(k)}
\bigg)
e^{i(|k|^\al - |k_2|^\al)t}
\ft {P_{\neq 0} \Ta_{\o}}(t, k_1) k_2 \ft V (t,k_2)
\\
&=:
\Nl_{1,1} (t,k)
+
\Nl_{1,2} (t,k),
\end{aligned}
\label{decN1}
\end{equation}
where
\begin{align*}
D_1(k)&:= \Big\{(k_1,k_2) \in \Z^2 \mid k_1 + k_2 = k, \, |k_1| \ge \frac{|k_2|}2 \Big\},
\\
D_2(k)&:= \Big\{(k_1,k_2) \in \Z^2 \mid k_1 + k_2 = k, \, |k_1| < \frac{|k_2|}2 \Big\}.
\end{align*}
Note that
\[
|k_2| \neq |k|
\]
for $(k_1,k_2) \in D_2(k)$ with $k_1 \neq 0$.
We further decompose $\Nl_{1,2}$ into two parts:
\begin{equation}
\begin{aligned}
\Nl_{1,2}
&= \dt \Ml_1 + \Kl_1,
\\
\Ml_1 (t,k)
&:=
\sum_{D_2(k)}
\frac{e^{i(|k|^\al - {|k_2|}^\al)t}}{|k|^\al - |k_2|^\al}
\ft{P_{\neq 0} \Ta_{\o}}(t,k_1) k_2 \ft V (t,k_2),
\\
\Kl_1 (t,k)
&:=
-\sum_{D_2(k)}
\frac{e^{i(|k|^\al - |k_2|^\al)t}}{|k|^\al - |k_2|^\al}
\dt \Big( \ft{P_{\neq 0} \Ta_{\o}}(t,k_1) k_2 \ft V (t,k_2) \Big).
\end{aligned}
\label{decN2}
\end{equation} 

For $\Nl_2$ in \eqref{ftfW}, we apply a similar calculation as in \eqref{decN1} and \eqref{decN2}. Namely, we decompose $\Nl_2$ as follows:
\begin{equation}
\begin{aligned}
\Nl_2(t,k)
&=
i \sum_{k_1+k_2=k} e^{i(|k|^\al + {|k_2|}^\al)t}
\ft{\Ta_{\cj \o}} (t, k_1) k_2 \cj{\ft V (t,-k_2)}
\\
&=
i \bigg(
\sum_{D_1(k)}
+
\sum_{D_2(k)}
\bigg)
e^{i(|k|^\al + {|k_2|}^\al)t}
\ft{\Ta_{\cj \o}} (t, k_1) k_2 \cj{\ft V (t,-k_2)}
\\
&=:
\Nl_{2,1} (t,k)
+
\Nl_{2,2} (t,k).
\end{aligned}
\label{decN3}
\end{equation}
We further decompose $\Nl_{2,2}$ into two parts:
\begin{equation}
\begin{aligned}
\Nl_{2,2}
&= \dt \Ml_2 + \Kl_2,
\\
\Ml_2 (t,k)
&:=
\sum_{D_2(k)}
\frac{e^{i(|k|^\al + {|k_2|}^\al)t}}{|k|^\al + |k_2|^\al}
\ft{ \Ta_{\cj \o}}(t,k_1) k_2 \cj{\ft V (t,-k_2)},
\\
\Kl_2 (t,k)
&:=
-
\sum_{D_2(k)}
\frac{e^{i(|k|^\al + |k_2|^\al)t}}{|k|^\al + |k_2|^\al}
\dt \Big(
\ft{ \Ta_{\cj \o}}(t,k_1) k_2 \cj{\ft V (t,-k_2)} \Big).
\end{aligned}
\label{decN4}
\end{equation} 

From \eqref{ftfW}--\eqref{decN4},
we have
\begin{equation}
\begin{aligned}
\dt \ft V (t,k)
&=
- (\Im P_0 \Ta_\o (t)) k \ft V (t,k)
\\
&\quad
+ 
\sum_{j=1}^2
\Big( \Nl_{j,1} (t,k)
+
\dt \Ml_j (t,k)
+
\Kl_j (t,k) \Big)
+
\Nl_3(t,k).
\end{aligned}
\label{ftfw2}
\end{equation}
The following is the main estimate in this subsection.

\begin{proposition}
\label{prop:FGH}
Let $\al > 1$ and  $s \in \R$ satisfy
\[
s>
\regwp,
\]
where $\regwp$ is defined in \eqref{condA}.
Then,
with the notation above,
there exists $C( \| u \|_{L_T^\infty H^s})>0$ such that the following estimates hold:
\begin{align}
\| \Nl_{1,1} \|_{L_T^\infty \l^2_{s-1}}
+ \|\Nl_{2,1}  \|_{L_T^\infty \l^2_{s-1}}
+ \| \Nl_3 \|_{L_T^\infty \l^2_{s-1}}
&\le
C( \| u \|_{L_T^\infty H^s}),
\label{F1F2}
\\
\| \Ml_1 \|_{L_T^\infty \l^2_{s+ \al-3}} + \| \Ml_2 \|_{L_T^\infty \l^2_{s+\al-3}}
&\le
C( \| u \|_{L_T^\infty H^s}),
\label{Mest}
\\
\|\Kl_1\|_{L_T^\infty \l^2_{s+\min(-1,\al-4)}}
+
\|\Kl_2\|_{L_T^\infty \l^2_{s+\min(-1,\al-4)}}
&\le
C( \| u \|_{L_T^\infty H^s}).
\label{Kest}
\end{align}
\end{proposition}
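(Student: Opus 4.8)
The plan is to treat \eqref{F1F2}--\eqref{Kest} by a single scheme. Every exponential in \eqref{decN1}--\eqref{decN4} has modulus one, so it is irrelevant for an $\l^2_k$-bound, and the task reduces to weighted convolution estimates that are uniform in $t\in[0,T]$. First I would record the available inputs: from $u\in C([0,T];H^s(\T))$ and Corollary \ref{cor:bili3}, the coefficients $\Ta_\o$, $\Ta_{\cj\o}$, $\Ta$, and the remainder $R$ are bounded in $H^{s-1}(\T)$ by $C(\|u\|_{L_T^\infty H^s})$, while $v=\dx u\in H^{s-1}$ gives $\ft V(t,\cdot)\in\l^2_{s-1}$ with the same uniform bound. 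The recurring tool is Young's inequality $\l^1\ast\l^2\hookrightarrow\l^2$, in which the $\l^1$-factor is always one of these coefficients: since $s-1>\tfrac12$, Corollary \ref{cor:bili3} and the bilinear estimates (Propositions \ref{prop:bili}, \ref{prop:bili2}) place them in the Wiener algebra, so their Fourier coefficients are summable. The two regions supply the needed gains: on $D_1(k)$ one has $\jb{k}\les\jb{k_1}$, so output regularity can be loaded onto the coefficient; on $D_2(k)$ one has $k_1\neq0$, $|k|\sim|k_2|$, and the elementary identity $\big||k|-|k_2|\big|=|k_1|\ge1$, whence the mean value theorem gives $\big||k|^\al-|k_2|^\al\big|\sim|k_2|^{\al-1}|k_1|\ges|k_2|^{\al-1}$ and $|k|^\al+|k_2|^\al\sim|k_2|^\al$.

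For \eqref{F1F2}, the $D_1$-pieces $\Nl_{1,1},\Nl_{2,1}$ are handled by writing $\jb{k}^{s-1}\les\jb{k_1}^{s-1}$ to put the $s-1$ derivatives on the coefficient (in $\l^2_{s-1}$) and leaving $\dx v\in H^{s-2}\subset\FL^1$ (using $s>\tfrac52$); Young closes. The term $\Nl_3$ is immediate since $R$ is a polynomial in $u,v,\cj u,\cj v$, hence already in $H^{s-1}$ by Corollary \ref{cor:bili3}. For \eqref{Mest}, on $D_2$ I would feed the denominator gain $|k_2|^{-(\al-1)}$ (resp.\ $|k_2|^{-\al}$ for $\Ml_2$) into the high-frequency factor $k_2\ft V(k_2)$; since $\jb{k_2}^{s+\al-3}|k_2|^{2-\al}\sim\jb{k_2}^{s-1}$, this keeps the high factor in $\l^2$, while the low factor is a summable coefficient, so Young yields the $\l^2_{s+\al-3}$ bound. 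The net gain of $\al-2$ derivatives is precisely the difference between $\l^2_{s-1}$ and $\l^2_{s+\al-3}$, and comes entirely from the non-vanishing of the denominator on $D_2$.

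The estimate \eqref{Kest} is the heart of the matter. Expanding the time derivative in \eqref{decN2} and \eqref{decN4} by the Leibniz rule produces two families of terms. When $\dt$ falls on $\ft V(k_2)$ I would use that, by \eqref{eqv2} and the definition of the profile, $\dt\ft V(t,k)=ik\,e^{i|k|^\al t}\ft\Ta(t,k)-i(\Re P_0\Ta_\o)k\,\ft V(t,k)$; the resulting factors $k_2^2\ft\Ta(k_2)$ with $\Ta\in H^{s-1}$ (and the lower-order transport factor $k_2^2\ft V(k_2)$), together with the $D_2$-denominator, land in $\l^2_{s+\al-4}$, and this benign family is what produces the exponent $s+\al-4$ (the binding one when $\al\le3$). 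When $\dt$ falls on the coefficient $\ft{\Ta_\o}(k_1)$ I would substitute \eqref{eqv2} for $\dt v$; writing $\dt v=-iD^\al v+(\text{lower order})$, the lower-order contribution is again benign, but the constituent $F_{\o\o}\,D^\al v$ of $\dt\Ta_\o$, which carries $\al$ derivatives, is genuinely dangerous.

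The main obstacle is exactly this $D^\al v$ contribution to $\Kl_1,\Kl_2$. A black-box bound, using only $\dt\Ta_\o\in H^{s-\al-1}$, would close solely under the much stronger hypothesis $s>\al+\tfrac12$; the point is therefore to retain the trilinear structure and not estimate $\dt\Ta_\o$ in isolation. On $D_2$ the internal frequency of $D^\al v$ is constrained relative to $k_2$, so the denominator factor $|k_2|^{\al-1}$ can be played against the $\al$ derivatives, with the $H^{s-1}$-decay of the smooth coefficient absorbing the range where the internal frequency is comparable to $k_2$; the resonant portion of this term saturates at $\l^2_{s-1}$, which is the origin of the alternative exponent $-1$ in $\min(-1,\al-4)$ (binding when $\al\ge3$). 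This is precisely the step at which the threshold $\regwp$ from \eqref{condA} is used. Assembling the benign and the resonant contributions, all uniform in $t\in[0,T]$, yields \eqref{Kest} and completes the proof.
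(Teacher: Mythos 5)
Your estimates for \eqref{F1F2} and \eqref{Mest}, and your treatment of the half of \eqref{Kest} in which $\dt$ falls on $\ft V(t,k_2)$, are correct and essentially the paper's own argument: in particular the identity $\dt \ft V(t,k)=ik e^{i|k|^\al t}\ft\Ta(t,k)-ik(\Re P_0\Ta_\o(t))\ft V(t,k)$ (the paper phrases this as $\|\dt V\|_{H^{s-2}}\le C$, obtained from the first equality in \eqref{ftfW}) is exactly what makes that family cost one derivative instead of $\al$.

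The gap lies in the family you yourself call the heart of the matter, where $\dt$ falls on $\ft{\Ta_\o}(k_1)$, and it comes from two false assertions. First, the internal frequency of $D^\al v$ is \emph{not} constrained on $D_2(k)$: the dangerous constituent of $\dt\ft{\Ta_\o}(k_1)$ is $\sum_{m+n=k_1}\ft{F_{\o\o}}(m)(-i)|n|^\al\ft v(n)$, and only the sum $m+n=k_1$ is small, while $n$ may be arbitrarily large with $m\approx -n$. In this high-high region the denominator gain $|k_2|^{-(\al-1)}$ is useless against $|n|^\al$ (since $|n|\gg|k_2|$ is allowed), and your dichotomy --- denominator gain versus coefficient decay at internal frequency comparable to $k_2$ --- never covers it; yet it is precisely this region that forces $s>\frac\al2+1$, since one needs the $H^{s-1}$ decay of \emph{both} high-frequency factors together with Cauchy--Schwarz, i.e.\ Proposition \ref{prop:bili}. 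Second, your claim that a black-box bound using only $\dt\Ta_\o\in H^{s-\al-1}$ closes solely under $s>\al+\frac12$ is wrong, and in fact the black-box bound is exactly what the paper does. The point you missed is that on $D_2(k)$ the entire denominator gain can be shifted onto the \emph{low} frequency, where $\dt\Ta_\o$ sits: inserting \eqref{D2e1} and $|k|\sim|k_2|$, the available weight is $\jb{k_1}^{-1}\jb{k_2}^{s-\max(\al-1,2)}$, and since $\jb{k_1}\les\jb{k_2}$ one has
\[
\jb{k_1}^{-1}\jb{k_2}^{s-\max(\al-1,2)}
\les
\jb{k_1}^{-\al+1}\jb{k_2}^{s-1} ,
\]
so after spending $\jb{k_2}^{s-1}$ on $\|V\|_{H^{s-1}}$ one only needs $\sum_{k_1}\jb{k_1}^{-\al+1}|\dt\ft{\Ta_\o}(k_1)|\les\|\dt\Ta_\o\|_{H^{-\al+\frac32+\eps}}$, and $H^{s-\al-1}\hookrightarrow H^{-\al+\frac32+\eps}$ already for $s\ge\frac52+\eps$. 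This is the paper's estimate of $\Kl_{1,1}$; the product structure of $\dt\Ta_\o$ is exploited only \emph{inside} Lemma \ref{lem:noli} (through Proposition \ref{prop:bili}, which distributes the $\al$ derivatives evenly between the two factors --- this is where $\regwp\ge\frac\al2+1$ enters), not in the $D_2$ convolution. Your figure $s>\al+\frac12$ arises from keeping only $\jb{k_1}^{-1}$ and wasting the factor $|k_2|^{-(\al-1)}$ on the high frequency.

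As written, therefore, your proof of \eqref{Kest} does not close: the unconstrained high-high interaction inside $\dt\Ta_\o$ is unaccounted for, and the mechanism offered in its place rests on a frequency constraint that does not exist. It can be repaired either by the redistribution-plus-black-box argument above, or by correcting your trilinear splitting into the regions $|n|\le|k_2|$ (where the denominator genuinely trades $|k_2|^{-(\al-1)}$ against $|n|^\al$) and $|n|>|k_2|$ (where $|m|\sim|n|$ and the two $H^{s-1}$ weights yield the condition $s\ge\frac\al2+1$); either route lands on the same threshold $\regwp$.
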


\begin{proof}
For simplicity, we suppress the time dependence in this proof.
First, we prove \eqref{F1F2}. 
Note that $k_1,k_2 \in \Z$ with $|k_1| \ge \frac{|k_2|}2$ implies that $\jb{k_1+k_2} \les \jb{k_1}$.
It follows from \eqref{decN1} and H\"older's inequality that
\[
\jb{k}^{s-1} |\Nl_{1,1}(k)|
\les
\sum_{k_2 \in \Z}
\jb{k-k_2}^{s-1} \big| \ft {P_{\neq 0}\Ta_{\o}}(k-k_2) \big|
\jb{k_2}
\big|\ft V (k_2)\big|.
\]
Hence,
Proposition \ref{prop:bili} and $s> \frac 52$ yields that
\begin{equation}
\begin{aligned}
\|\Nl_{1,1}\|_{\l^2_{s-1}}
&\les
\| \Ta_\o \|_{H^{s-1}}
\sum_{k_2 \in \Z}
\jb{k_2} |\ft V(k_2)|
\\
&\les
\| \Ta_\o \|_{H^{s-1}} \| V \|_{H^{s-1}}
\le
C( \| u \|_{L_T^\infty H^s}).
\end{aligned}
\label{F11}
\end{equation}

From \eqref{decN3},
the same calculation as in \eqref{F11}, we have
\begin{equation}
\|\Nl_{2,1}\|_{\l^2_{s-1}}
\le
C( \| u \|_{L_T^\infty H^s}).
\label{F21}
\end{equation}
Recall that $R$ is a polynomial in $u,v, \cj u, \cj v$.
From \eqref{ftfW},
Proposition \ref{prop:bili} and $s>\frac 32$
yield that
\begin{equation}
\| \Nl_3 \|_{\l^2_{s-1}}
\le
C( \| u \|_{L_T^\infty H^s}).
\label{F3}
\end{equation} 
Combining \eqref{F11}--\eqref{F3}, we get \eqref{F1F2}
if
$s> \frac 52$.

Next, we prove \eqref{Mest}.
When $\al>1$ and $|k_1| < \frac{|k_2|}2$,
we have
\begin{equation}
\begin{aligned}
\big| |k_1+k_2|^\al - |k_2|^\al \big|
&= \bigg| k_1 \al \int_0^1 |\theta k_1 + k_2|^{\al-2} (\theta k_1 + k_2) d\theta \bigg|
\\
&\sim
|k_1| |k_2|^{\al-1}.
\end{aligned}
\label{D2e1}
\end{equation}
It follows from \eqref{decN2} and \eqref{D2e1} that
\begin{align*}
\jb{k}^{s+\al-3} |\Ml_1 (k)| 
&\les
\sum_{k_1 \in \Z}
\jb{k_1}^{-1}
\big| \ft{P_{\neq 0} \Ta_\o}(k_1)\big|
\jb{k-k_1}^{s-1} \big|\ft V(k-k_1)\big|.
\end{align*}
Hence,
we obtain that
\begin{equation}
\begin{aligned}
\| \Ml_1 \|_{\l^2_{s+\al-3}}
&\les
\sum_{k_1 \in \Z}
\jb{k_1}^{-1}
\big| \ft{P_{\neq 0} \Ta_\o}(k_1)\big|
\| V \|_{H^{s-1}}
\\
&\les
\| \Ta_{\o} \|_{H^{s-1}}
\| V \|_{H^{s-1}}
\le
C( \| u \|_{L_T^\infty H^s})
\end{aligned}
\label{M1}
\end{equation}
if $s> \frac 32$.

From \eqref{decN4},
the same calculation as in \eqref{M1} yields that
\begin{equation}
\| \Ml_2 \|_{\l^2_{s+\al-3}}
\le
C( \| u \|_{L_T^\infty H^s}).
\label{M2}
\end{equation}
From \eqref{M1} and \eqref{M2}, we get \eqref{Mest}
if $s> \frac 32$.

Finally, we consider \eqref{Kest}. It follows from \eqref{decN2} and \eqref{D2e1} that
\begin{equation}
\begin{aligned}
&\jb{k}^{s+\min(-1,\al-4)} |\Kl_1 (k)|
\\
&\les
\sum_{D_2(k)}
\jb{k_1}^{-1} \jb{k_2}^{s- \max (\al-1,2)}
\Big| \dt \Big( \ft{P_{\neq 0} \Ta_{\o}}(k_1) \ft V(k_2) \Big) \Big|
\\
&\les
\sum_{D_2(k)}
\biggl(
\jb{k_1}^{-\al+1}
\big| \dt  \ft{P_{\neq 0} \Ta_{\o}}(k_1) \big|
\jb{k_2}^{s-1}
\big|\ft V(k_2)\big|
\\
&\hspace*{60pt}
+
\jb{k_1}^{-1}
\big|\ft{P_{\neq 0} \Ta_{\o}}(k_1) \big| \jb{k_2}^{s-2}
\big|\dt \ft V(-k_2)\big|
\biggr)
\\
&=:
\Kl_{1,1}(k) + \Kl_{1,2}(k).
\label{pfKest}
\end{aligned}
\end{equation}
For  $\Kl_{1,1}$,
Minkowski's integral inequality and Lemma \ref{lem:noli}
implies that
\begin{equation}
\begin{aligned}
\| \Kl_{1,1} \|_{\l^2}
&\les
\sum_{k_1 \in \Z}
\jb{k_1}^{-\al+1}
\big| \dt (\ft{P_{\neq 0} \Ta_{\o}}) (k_1) \big|
\| V \|_{H^{s-1}}
\\
&\les
\| \dt \Ta_{\o} \|_{H^{-\al+\frac 32+\eps}}
\| V \|_{H^{s-1}}
\\
&\le
C(\| u \|_{L_T^\infty H^s})
\end{aligned}
\label{estK11}
\end{equation}
for $s > \regwp$ and $0< \eps \ll 1$.

For $\Kl_{1,2}$,
it follows from \eqref{pfKest} that
\begin{align*}
\| \Kl_{1,2} \|_{\l^2}
&\les
\| \Ta_{\o} \|_{L^2}
\| \dt V \|_{H^{s-2}}
.
\end{align*}
The first equality in \eqref{ftfW} yields that
\begin{align*}
\| \dt V \|_{H^{s-2}}
&\les
|P_0 \Ta_\o| \| V \|_{H^{s-1}}
+
\| ( {P_{\neq 0} \Ta_{\o}} ) \dx v \|_{H^{s-2}}
\\
&\quad
+
\|  \Ta_{\cj \o} \cj {\dx v} \|_{H^{s-2}}
+
\| R \|_{H^{s-2}}.
\end{align*}
For the second part on the right-hand side,
Proposition \ref{prop:bili} and $s> \frac 32$
yield that
\[
\| ( {P_{\neq 0} \Ta_{\o}} ) \dx v \|_{H^{s-2}}
\les
\| {P_{\neq 0} \Ta_{\o}} \|_{H^{s-1}} \| \dx v \|_{H^{s-2}}
\le
C( \| u \|_{L_T^\infty H^s})
.
\]
The third part on the right-hand side is similarly handled.
Hence, we have that
\begin{equation}
\|\Kl_{1,2} \|_{\l^2}
\le
C( \| u \|_{L_T^\infty H^s})
\label{estK12}
\end{equation}
if
$s> \frac 32$.

From \eqref{pfKest}--\eqref{estK12}, we obtain
\begin{equation}
\| \Kl_1 \|_{\l^2_{s+\min (-1,\al-4)}}
\le 
C( \| u \|_{L_T^\infty H^s})
\label{estK1}
\end{equation}
if
$s> \regwp$.

Since
\[
\frac{1}{|k|^\al + |k_2|^\al}
\le \frac1{\jb{k_1} \jb{k_2}^{\al-1}}
\]
for $|k_1| < \frac{|k_2|}2$,
the same calculation as in \eqref{estK1} yields that
\begin{equation}
\| \Kl_2 \|_{\l^2_{s+\min(-1,\al-4)}}
\le
C( \| u \|_{L_T^\infty H^s}).
\label{estK2}
\end{equation} 
Combining \eqref{estK1} and \eqref{estK2}, we get \eqref{Kest}
if
$s> \regwp$.
This concludes the proof.
\end{proof}

\subsection{Proof of Theorem \ref{thm:NE2}}
We use the same notations as in Subsection \ref{subsec:apriori}.
We only consider the case (i) in Theorem \ref{thm:NE2},
since the case (ii) follows from a straightforward modification.
In what follows, we assume that
\begin{equation}
\Im P_0 \Ta_\o(0) >0.
\label{NI:phim2}
\end{equation}

From \eqref{ftfw2}, we have
\begin{align*}
&\ft V(t,k)
\\
&=
e^{- k\int^t_0 \Im P_0 \Ta_\o(\tau) d\tau} \ft V(0)
\\
&\quad
+\sum_{j=1}^2 \Big( \Ml_j(t,k) - e^{-k\int^t_0 \Im P_0 \Ta_\o(\tau) d\tau} \Ml_j(0,k) \Big) 
\\
&\quad
+
\int^t_0 e^{-k \int^t_{t'} \Im P_0 \Ta_\o(\tau) d\tau}
\bigg( \Nl_3(t',k) - \sum_{j=1}^2 \big( k (\Im P_0 \Ta_\o(t')) \Ml_j (t',k)
\\
&\hspace*{100pt}
+ \Nl_{j,1} (t',k)+ \Kl_j (t',k) \big) \bigg)dt'
\end{align*}
for $t \in [0,T]$ and $k \in \Z$.
It holds that
\begin{equation}
\begin{aligned}
\ft V(0,k)
&=
e^{k\int^t_0 \Im P_0 \Ta_\o(\tau) d\tau} \ft V(t,k)
\\
&\quad
- \sum_{j=1}^2 \Big( e^{k\int^t_0 \Im P_0 \Ta_\o(\tau) d\tau} \Ml_j(t,k) - \Ml_j(0,k) \Big)
\\
&\quad
-
\int^t_0 e^{k \int^{t'}_0 \Im P_0 \Ta_\o(\tau) d\tau}
\bigg(\Nl_3(t',k)
\\
&\hspace*{70pt}
-
\sum_{j=1}^2 \Big( k (\Im P_0 \Ta_\o(t')) \Ml_j (t',k)
\\
&\hspace*{70pt}
+ \Nl_{j,1} (t',k) + \Kl_j (t',k) \Big)\bigg) dt'.
\end{aligned}
\label{ftV0k1}
\end{equation}

From the continuity of $P_0 \Ta_\o$ and \eqref{NI:phim2},
there exists $ T^{\ast} \in (0,T]$ such that
\begin{equation}
\Im P_0 \Ta_\o (t)
\ge
\frac {\Im P_0 \Ta_\o (0)}2
\label{intM}
\end{equation}
for any $t\in [0,T^{\ast}]$.
Then,
we note that
\begin{equation}
\begin{aligned}
&\bigg\|
\int^{T^{\ast}}_0 e^{k \int^{t'}_0 \Im P_0 \Ta_\o(\tau) d\tau} \ft{P_- f} (t',k) dt'
\bigg\|_{\l^2}
\\
&\le
\bigg\|
\int^{T^{\ast}}_0 e^{ -\frac{|k|}2 \Im P_0 \Ta_\o(0) t'} |\ft{P_- f} (t',k)| dt'
\bigg\|_{\l^2}
\\
&\le
\int^{T^{\ast}}_0
\Big\|
\Big( \frac 2{|k| \Im P_0 \Ta_\o(0) t'} \Big)^{1-\eps} \ft{P_- f}(t',k) \Big\|_{\l^2} dt'
\\
&\le
\Big( \frac{2 \sqrt 2}{\Im P_0 \Ta_\o(0)} \Big)^{1-\eps}
\frac{(T^\ast)^\eps}{\eps}
\| P_- f \|_{L_{T^\ast}^\infty H^{-1+\eps}}
\end{aligned}
\label{eintM}
\end{equation}
for $\eps \in (0,1)$ and $f \in L^\infty ([0,T^\ast]; H^{-1+\eps}(\T))$.

Let $\dl>0$ be specified later.
From
\eqref{ftV0k1} with $t=T^\ast$
and
\eqref{eintM},
we obtain that
\begin{equation}
\begin{aligned}
\| P_-V (0) \|_{H^{s-1+\dl}}
&\les
\Big\| \jb{k}^{s-1+\dl} e^{k\int^{T^\ast}_0 \Im P_0 \Ta_\o(\tau) d\tau} \ft{P_-V} (T^\ast,k) \Big\|_{\l^2}
\\
&\quad
+
\| \Nl_3 \|_{L_{T^\ast}^\infty \l^2_{s-2+\dl+\eps}}
\\
&\quad
+
\sum_{j=1}^2
\big(
\| \Ml_j \|_{L_{T^\ast}^\infty \l^2_{s-1+\dl+\eps}}
+ \| \Nl_{j,1} \|_{L_{T^\ast}^\infty \l^2_{s-2+\dl+\eps}}
\\
&\hspace*{40pt}
+ \| \Kl_j \|_{L_{T^\ast}^\infty \l^2_{s-2+\dl+\eps}}
\big)
\label{kft0}
\end{aligned}
\end{equation}
for $\eps \in (0,1)$.
Proposition \ref{prop:FGH} implies that
\begin{equation}
\begin{aligned}
&\| \Nl_3 \|_{L_{T^\ast}^\infty \l^2_{s-2+\dl+\eps}}
+
\sum_{j=1}^2
\big(
\| \Ml_j \|_{L_{T^\ast}^\infty \l^2_{s-1+\dl+\eps}}
\\
&\hspace*{100pt}
+ \| \Nl_{j,1} \|_{L_{T^\ast}^\infty \l^2_{s-2+\dl+\eps}}
+ \| \Kl_j \|_{L_{T^\ast}^\infty \l^2_{s-2+\dl+\eps}}
\big)
\\
&\le
C(\dl, \| u \|_{L_T^\infty H^s})
\end{aligned}
\end{equation}
provided that
\begin{align*}
0<\dl <\min (\al-2,1),
\quad
0< \eps< \min (\al-2,1) - \dl. 
\end{align*}

From \eqref{intM},
a direct calculation shows that
\begin{equation}
\begin{aligned}
&\Big\| \jb{k}^{s-1+\dl} e^{k\int_0^{T^\ast} \Im P_0 \Ta_\o(\tau) d\tau} \ft{P_-V} (T^\ast,k) \Big\|_{\l^2}
\\
&\les
\big\| \jb{k}^{s-1+\dl} e^{-|k| \frac{\Im P_0 \Ta_\o(0)}2 T^\ast} \ft{P_-V} (T^\ast,k) \Big\|_{\l^2}
\\
&\les
\| V \|_{L_T^\infty H^{s-1}}
\end{aligned}
\label{ftV0aa}
\end{equation}
for $\dl>0$.
Therefore,
\eqref{kft0}--\eqref{ftV0aa},
we obtain that
$P_- V(0) \in H^{s-1+\dl}(\T)$
provided that
\[
0<\dl <\min (\al-2,1).
\]
Thus,
we have
\begin{align*}
\| P_- \phi \|_{H^{s+\dl}}
\le
C( \dl, \| u \|_{L_T^\infty H^s}).
\end{align*}
This shows (i) in Theorem \ref{thm:NE2}.

\mbox{}

\noindent
{\bf 
Acknowledgements.}
This work was
supported by JSPS KAKENHI Grant numbers
JP23K03182 and JP25K07070.


\begin{thebibliography}{99}
\bibitem{AlTz25}
P. Alphonse, N. Tzvetkov,
{\it
A smoothing effect for the fractional Schr\"odinger equations on the circle and observability},
Int. Math. Res. Not. IMRN 2025, no. \textbf{8}, Paper No. rnaf100, 20 pp.


\bibitem{BaPe24}
H. Bahouri, G. Perelman,
{\it
Global well-posedness for the derivative nonlinear Schr\"odinger equation with periodic boundary condition},
Int. Math. Res. Not. IMRN 2024, no. \textbf{24}, 14479--14518.


\bibitem{BoSm75}
J. L. Bona, R. Smith,
{\it The initial-value problem for the Korteweg-de Vries equation},
Philos. Trans. Roy. Soc. London Ser. A \textbf{278} (1975), no. 1287, 555--601.

\bibitem{BLLZ}
E. Brun, G. Li, R. Liu, Y. Zine,
{\it Global well-posedness of one-dimensional cubic fractional nonlinear Schr\"odinger equations in negative Sobolev spaces},
arXiv:2311.13370.

\bibitem{Chi02}
H. Chihara,
{\it
The initial value problem for Schr\"odinger equations on the torus},
Int. Math. Res. Not. (2002), no. \textbf{15}, 789--820.


\bibitem{Chi15}
H. Chihara,
{\it
Fourth-order dispersive systems on the one-dimensional torus},
J. Pseudo-Differ. Oper. Appl. \textbf{6} (2015), no. 2, 237--263.


\bibitem{DNY21}
Y. Deng, A. R. Nahmod, H. Yue,
{\it Optimal local well-posedness for the periodic derivative nonlinear Schr\"odinger equation},
Comm. Math. Phys. \textbf{384} (2021), no. 2, 1061--1107.

\bibitem{EGT19}
M. B. Erdo\u{g}an, T. B. G\"urel, N. Tzirakis,
{\it
Smoothing for the fractional Schr\"odinger equation on the torus and the real line},
Indiana Univ. Math. J. \textbf{68} (2019), no. 2, 369--392.

\bibitem{Gas21}
L. Gassot,
{\it
The third order Benjamin-Ono equation on the torus: well-posedness, traveling waves and stability},
Ann. Inst. H. Poincar\'e C Anal. Non Lin\'eaire \textbf{38} (2021), no. 3, 815--840.


\bibitem{HOV25}
M. Hayashi, T. Ozawa, N. Visciglia,
{\it Global $H^2$-solutions for the generalized derivative NLS on $\T$},
SIAM J. Math. Anal. \textbf{57} (2025), no. 2, 1483–1501.

\bibitem{Her06}
S. Herr,
{\it On the Cauchy problem for the derivative nonlinear Schr\"odinger equation with periodic boundary condition},
Int. Math. Res. Not. (2006), Art. ID 96763, 33 pp.


\bibitem{II01}
R. J. Iorio, V. de M. Iorio,
{\it Fourier analysis and partial differential equations},
Cambridge Stud. Adv. Math., \textbf{70},
Cambridge University Press, Cambridge, 2001. xii+411 pp.

\bibitem{Joc24}
N. J\"ockel,
{\it
Well-posedness of the periodic dispersion-generalized Benjamin-Ono equation in the weakly dispersive regime},
Nonlinearity \textbf{37} (2024), no. 8, Paper No. 085002, 37 pp.

\bibitem{KaTo06}
T. Kappeler, P. Topalov,
{\it
Global wellposedness of KdV in $H^{-1} (\T,\R)$},
Duke Math. J. \textbf{135} (2006), no. 2, 327--360.


\bibitem{Kato25}
T. K. Kato,
{\it
Unconditional well-posedness for the fourth-order nonlinear Schr\"odinger-type equations on the torus},
J. Evol. Equ. \textbf{25} (2025), no. 3, Paper No. 75.


\bibitem{KePi16}
C. E. Kenig, D. Pilod,
{\it Local well-posedness for the KdV hierarchy at high regularity.}
Adv. Differential Equations \textbf{21} (2016), no. 9--10, 801--836.

\bibitem{KPV93}
C. E. Kenig, G. Ponce, L. Vega,
{\it Small solutions to nonlinear Schr\"odinger equations},
Ann. Inst. H. Poincar\'e C Anal. Non Lin\'eaire \textbf{10} (1993), no. 3, 255--288.

\bibitem{KNV23}
R. Killip, M. Ntekoume, M. Vi\c{s}an,
{\it On the well-posedness problem for the derivative nonlinear Schr\"odinger equation},
Anal. PDE \textbf{16} (2023), no. 5, 1245--1270.

\bibitem{KiTs18}
N. Kishimoto, Y. Tsutsumi,
{\it Ill-posedness of the third order NLS equation with Raman scattering term},
Math. Res. Lett. \textbf{25} (2018), no. 5, 1447--1484.


\bibitem{KoOk25a}
T. Kondo, M. Okamoto,
{\it Norm inflation for a higher-order nonlinear Schr\"odinger equation with a derivative on the circle},
Partial Differ. Equ. Appl. \textbf{6} (2025), no. 2, Paper No. 11, 14 pp.

\bibitem{KoOk25b}
T. Kondo, M. Okamoto,
{\it Well- and ill-posedness of the Cauchy problem for semi-linear Schr\"odinger equations on the torus},
arXiv:2501.04205.

\bibitem{Kwa18}
C. Kwak,
{\it
Low regularity Cauchy problem for the fifth-order modified KdV equations on $\T$},
J. Hyperbolic Differ. Equ. \textbf{15} (2018), no. 3, 463--557.

\bibitem{La00}
N. Laskin,
{\it Fractional quantum mechanics and L\'evy path integrals},
Phys. Lett. A \textbf{268} (2000), no. 4--6, 298--305.

\bibitem{Li19}
D. Li,
{\it On Kato-Ponce and fractional Leibniz},
Rev. Mat. Iberoam. \textbf{35} (2019), no. 1, 23--100.


\bibitem{LiWa24}
R. Liang, Y. Wang,
{\it
Gibbs dynamics for fractional nonlinear Schr\"odinger equations with weak dispersion},
Comm. Math. Phys. \textbf{405} (2024), no. 10, Paper No. 250, 69 pp.


\bibitem{LiZh24}
J. Liu, J. Zhang,
{\it
The quasi-periodic solution of fractional nonlinear Schr\"odinger equation on tori},
Phys. D \textbf{461} (2024), Paper No. 134102, 9 pp.



\bibitem{Mizu06}
R. Mizuhara,
{\it
The initial value problem for third and fourth order dispersive equations in one space dimension},
Funkcial. Ekvac. \textbf{49} (2006), no. 1, 1--38.


\bibitem{MoTa22}
L. Molinet, T. Tanaka,
{\it
Unconditional well-posedness for some nonlinear periodic one-dimensional dispersive equations},
J. Funct. Anal. \textbf{283} (2022), no. 1, Paper No. 109490, 45 pp.


\bibitem{NaWa25}
K. Nakanishi, B. Wang,
{\it
Global wellposedness of general nonlinear evolution equations for distributions on the Fourier half space},
J. Funct. Anal. \textbf{289} (2025), no. 8, Paper No. 111004, 70 pp.


\bibitem{Ono24}
E. Onodera,
{\it  Local well-posedness for a fourth-order nonlinear dispersive system on the 1D torus},
arXiv:2411.00452.

\bibitem{Roy22}
T. Roy,
{\it
Local well-posedness and parabolic smoothing of solutions of fully nonlinear third-order equations on the torus},
Nonlinear Anal. \textbf{221} (2022), Paper No. 112845, 45 pp.


\bibitem{SRR25}
B. Sanchez, O. Ria\~no, S. Roudenko,
{\it On the well-posedness of the periodic fractional Schr\"odinger equation},
Partial Differ. Equ. Appl. \textbf{6} (2025), no. 3, Paper No. 23, 28 pp.


\bibitem{Sch20}
R. Schippa,
{\it
Local and global well-posedness for dispersion generalized Benjamin-Ono equations on the circle},
Nonlinear Anal. \textbf{196} (2020), 111777, 38 pp.


\bibitem{Se12}
J. Segata,
{\it Refined energy inequality with application to well-posedness for the fourth order nonlinear Schr\"odinger type equation on torus},
J. Differential Equations \textbf{252} (2012), no. 11, 5994--6011.

\bibitem{Se15}
J. Segata,
{\it
Initial value problem for the fourth order nonlinear Schr\"odinger type equation on torus and orbital stability of standing waves},
Commun. Pure Appl. Anal. \textbf{14} (2015), no. 3, 843--859.


\bibitem{Tak99}
H. Takaoka,
{\it Well-posedness for the one-dimensional nonlinear Schr\"odinger equation with the derivative nonlinearity},
Adv. Differential Equations \textbf{4} (1999), no. 4, 561--580.

\bibitem{TaTs22}
T. Tanaka, K. Tsugawa,
{\it Well-posedness and parabolic smoothing effect for higher order Schr\"odinger type equations with constant coefficients},
Osaka J. Math. \textbf{59} (2022), no. 2, 465--480.


\bibitem{Tao01}
T. Tao,
{\it Multilinear weighted convolution of $L^2$-functions, and applications to nonlinear dispersive equations},
Amer. J. Math. \textbf{123} (2001), no. 5, 839--908.


\bibitem{YZL25}
X. Yang, J. Zhang, J. Liu,
{\it Long time stability of fractional nonlinear Schr\"odinger equations},
J. Math. Anal. Appl. \textbf{544} (2025), no. 1, Paper No. 129035, 46 pp.


\end{thebibliography}
\end{document}